 \theoremstyle{plain}
 \newtheorem{thm}{Theorem}[section]
 \newtheorem{cor}[thm]{Corollary}
 \newtheorem{lem}[thm]{Lemma}
 \newtheorem{propo}[thm]{Proposition}
 \theoremstyle {definition}
 \theoremstyle{remark}
 \newtheorem*{rem}{Remark}
\def\polhk#1{\setbox0=\hbox{#1}{\ooalign{\hidewidth
    \lower1.5ex\hbox{`}\hidewidth\crcr\unhbox0}}}
\renewcommand{\Im}{\operatorname{Im}} 
\renewcommand{\Re}{\operatorname{Re}}
\newcommand{\Dom}{\operatorname{Dom}\,}
\newcommand{\diam}{\operatorname{diam}\,}
\newcommand{\area}{\operatorname{area}}
\newcommand{\daron}{\operatorname{int}\,}
\newcommand{\ex}{\operatorname{\mathbb{E}xp}}
\newcommand{\ecc}{\operatorname{Ecc}} 
\newcommand{\eps}{\varepsilon}
\newcommand{\irr}{Irrat_{\geq N}}
\newcommand{\pc}{\mathcal{PC}}
\newcommand{\alf}{\alpha}
\newcommand{\ff}{\mathcal{I}\hspace{-1pt}\mathcal{S}}
\newcommand{\fff}{\ff_{(0,\alf^*]}}
\newcommand{\orb}{\mathcal{O}}
\newcommand{\RS}{\mathbb{\hat{C}}}
\newcommand{\vfi}{\varphi}
\newcommand{\cc}{\mathbb{C}}
\newcommand{\ra}{\rightarrow}
\newcommand{\ea}{e^{2\pi \alf \mathbf{i}}}
\newcommand{\p}{\mathcal{P}}
\newcommand{\C}{\mathcal{C}}
\newcommand{\Csh}{\mathcal{C}^\sharp}
\newcommand{\rr}{\mathcal{R}}
\newcommand{\D}{\mathcal{D}}
\newcommand{\K}{\mathcal{K}}
\newcommand{\G}{\mathcal{G}}
\newcommand{\cp}{\textup{cp}}
\newcommand{\cv}{\textup{cv}}
\newcommand{\Bi}{\textbf{i}}
\newcommand{\Bk}{\boldsymbol{k}}
\newcommand{\co}[1]{^{\circ {#1}}}
\begin{document}
\title[Typical orbits of complex quadratic polynomials]{Typical orbits of quadratic polynomials
with a neutral fixed point: Brjuno type}
\author{Davoud Cheraghi}
\address{Mathematics Institute, University of Warwick,   Coventry CV4-7AL, UK}
\email{d.cheraghi@warwick.ac.uk}
\keywords{typical orbits, quadratic polynomials, neutral fixed point, attractor}
\subjclass{Primary 37F50; Secondary 58F12}
\date{\today}

\begin{abstract}
We describe the topological behavior of typical \textit{orbits} of complex quadratic polynomials 
$P_\alf(z)=\ea z+ z^2$, with $\alf$ of \textit{high return} type.
Here we prove that for such Brjuno values of $\alf$ the closure of the \textit{critical  orbit}, 
which is the \textit{measure theoretic attractor} of the map, has zero area. 
Then we show that the \textit{limit set} of the orbit of a typical point in 
the \textit{Julia set} of $P_\alf$ is equal to the closure of the critical orbit.
Our method is based on the near parabolic renormalization of Inou-Shishikura, and a uniform 
optimal estimate on the derivative of the Fatou coordinate that we prove here. 
\end{abstract}
\maketitle
\renewcommand{\thethm}{\Alph{thm}}
\section*{Introduction}
The local, semi-local, and global dynamics of the maps 
\[P_\alf(z):= \ea z+z^2:\cc\ra\cc,\]
for irrational values of $\alf$, have been extensively studied through various methods over the last decades.
The aim of this work is to describe the topological behavior of the orbit of typical points under these maps. 
This is a step toward understanding the measurable dynamics of these maps.   

The \textit{post-critical} set of $P_\alf$ is defined as the closure of the orbit of the critical value;
\[\pc(P_\alf):=\overline{\cup_{j=1}^\infty P_\alf\co{j}(-\ea/2)}.\]
It is well-known \cite{Ly83b} that $\pc(P_\alf)$ is the \textit{measure theoretic attractor} of the dynamics of 
$P_\alf$ on its \textit{Julia set} $J(P_\alf)$. 
That is, the orbit of Lebesgue almost every point in $J(P_\alf)$ eventually stays in any given 
neighborhood of $\pc(P_\alf)$. 
To understand the long term behavior of typical orbits in $J(P_\alf)$, one needs to understand the structure   
of the set $\pc(P_\alf)$ and the iterates of $P_\alf$ near $\pc(P_\alf)$.  

Two different scenarios occur depending on the local dynamics of $P_\alf$ at zero.
Let $\alf:=[0; a_1,a_2,a_3,\dots]$ denote the continued fraction expansion of $\alf$ with the 
convergents $p_n/q_n:=[0;a_1,a_2, \dots, a_n]$. 
By classical theorems of Siegel and Brjuno~\cite{Sie42,brj71}, if the series 
$\sum_{n=1}^{\infty} \log q_{n+1}/q_n$ is finite, the map $P_\alf$ is \textit{linearizable} at zero, 
i.e.\ it is locally conformally conjugate to a rotation. 
When linearizable, the maximal domain of linearization is called the \textit{Siegel disk} of $P_\alf$. 
The values of $\alf$ for which the above sum is convergent are called \textit{Brjuno} numbers.  
On the other hand, Yoccoz \cite{Yoc95} proved that the convergence of this series is necessary for the  
linearizability of $P_\alf$. 
In \cite{Ma83}, Ma{\~n}{\'e} proves that for irrational $\alf$ the orbit of the critical point of $P_\alf$ is recurrent, 
and accumulates at the boundary of the Siegel disk (at zero) when $P_\alf$ is linearizable 
(non-linearizable, respectively). 

Petersen and Zakeri in \cite{PZ04} describ the topology and geometry of the dynamics of the 
linearizable maps $P_\alf $ for a.e.\ $\alf\in[0,1]$. 
For these values, they show that $J(P_\alf)$ has zero area.
However, a rather surprising result of Buff and Ch\'eritat \cite{BC06} states that there are parameters 
$\alf$, both of Brjuno and non-Brjuno type, for which $J(P_\alf)$ has positive area.
It is conjectured~\cite{Cher09} that for generic values of $\alf$, $J(P_\alf)$ has positive area.
Knowing all these, still there is not a single example of a quadratic map with a non-linearizable fixed point 
whose local dynamics is completely understood.

A major breakthrough in the field by Inou and Shishikura \cite{IS06} has allowed further progress in the study 
of these maps.
It is an essential part of \cite{BC06} and is used in \cite{Sh10} to show that the 
boundary of these Siegel disks are Jordan curves.  
Roughly speaking, Inou-Shishikura show that successive \textit{renormalizations} of $P_\alf$ 
(a sophisticated version of successive return maps depicted in Figure~\ref{easy-renormalization}) 
are defined on ``large enough'' domains, and belong to a compact class of maps. 
This scheme requires the digits in the expansion of $\alf$ to be larger than some constant $N$,\!
\footnote{However, they conjecture that $N=1$.}, 
i.e.\ 
\[\alf\in \irr:=\{[0;a_1,a_2,\dots]\in (0,1)\mid \inf_i a_i \geq N\}.\] 

In \cite{Ch10-I} we started a systematic study of the measurable dynamics of these maps by quantifying the 
renormalization scheme of Inou-Shishikura as well as estimating the changes of coordinates between consecutive  
renormalization levels. 
This was mainly applied to the study of non-linearizable maps. 
In particular, we showed that for non-Brjuno values of $\alf$, $\pc(P_\alf)$ is non-uniformly porous 
\footnote{A set $E\subseteq \cc$ is said to be non-uniformly porous, if there exists a 
$\lambda\in (0,1)$ satisfying the following property. 
For every $z\in E$ there exists a sequence of real numbers $r_n\to 0$ such that the ball of radius $r_n$ 
about $z$ contains a ball of radius $\lambda r_n$ disjoint from $E$, for every $n$.}
(and hence has zero area). 
Here, we prove the following counterpart.
\begin{thm} \label{pc-area}
For every Brjuno $\alf\in\irr$,  $\pc(P_\alf)$ has zero area.
\end{thm}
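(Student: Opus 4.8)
The plan is to run the Inou--Shishikura near-parabolic renormalization as an infinite tower and to trap $\pc(P_\alf)$ inside a sequence of dynamically defined open sets whose areas tend to $0$. First I would build the tower: set $f_0:=P_\alf$, restricted to a suitable domain near the fixed point, and $f_{n+1}:=\rr f_n$, so that $f_n$ has rotation number $\alf_n=[0;a_{n+1},a_{n+2},\dots]$ and, thanks to $\alf\in\irr$, every $f_n$ lies in the compact Inou--Shishikura class by \cite{IS06}. Since the Brjuno condition $\sum_k\log q_{k+1}/q_k<\infty$ depends only on the tail of the continued fraction, each $\alf_n$ is again Brjuno, so every $f_n$ is linearizable with a Siegel disk $\Delta_n$ of uniformly bounded geometry (it ranges over a compact family). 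By Ma{\~n}{\'e}'s theorem \cite{Ma83} this gives $\partial\Delta\subseteq\pc(P_\alf)\subseteq J(P_\alf)$, and $\pc(P_\alf)$ lies in a fixed bounded region.

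Next I would read off the self-similar structure of $\pc(P_\alf)$ from the tower. Following the quantitative renormalization scheme of \cite{Ch10-I}, for each $n$ there is a univalent change of coordinate $\Psi_n$, built from the (inverse) Fatou coordinate of $f_{n-1}$, which carries a fixed-size neighborhood $U_n$ of $\pc(f_n)$ near the fixed point into the dynamical plane of $f_{n-1}$ and intertwines the renormalizations. Composing these, $\Xi_n:=\Psi_1\circ\cdots\circ\Psi_n$ maps $U_n$ into the dynamical plane of $P_\alf$, and the combinatorics of the return maps supplies, for a controlled number $k_n$ of indices (of order $q_{n+1}$), the inclusion
\[ \pc(P_\alf)\ \subseteq\ W_n:=\overline{\ \bigcup_{j=0}^{k_n-1}P_\alf\co{j}\bigl(\Xi_n(U_n)\bigr)}, \]
the forward iterates being exactly those needed to spread the level-$n$ fundamental piece over a full neighborhood of $\partial\Delta$; here one uses that every point of the critical orbit reenters the level-$n$ renormalization region within a window of length $k_n$. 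It then suffices to prove $\area(W_n)\to 0$.

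The core of the proof is this area bound, and it is precisely here that the uniform optimal estimate on the derivative of the Fatou coordinate is needed. Writing $\area(W_n)\le\sum_{j<k_n}\int_{\Xi_n(U_n)}|(P_\alf\co{j})'|^2\,dA$ and changing variables reduces everything to controlling $|\Xi_n'|$ --- the product of the derivatives $|\Psi_k'|$ taken along the tower --- on the relevant region, together with the distortion of the finitely many $P_\alf$-iterates near the parabolic-like cusp. The naive bound $|\Xi_n'|\asymp\prod_{k=0}^{n-1}\alf_k=\|q_{n-1}\alf\|\asymp 1/q_n$ holds only away from the cusp and is not enough: a single partial quotient $a_{n+1}$ can be astronomically large while $\alf$ stays Brjuno, so the $\asymp a_{n+1}$ iterates of the fundamental sector accumulating at the cusp must be handled one at a time. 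In Fatou coordinates these are unit translates, so in the ambient coordinate the $j$-th of them has diameter of order $1/j^2$ and area of order $1/j^4$, keeping their total area bounded; feeding the sharp pointwise control of $|\Psi_k'|$ on each such petal --- the optimal derivative estimate --- into the change of variables bounds $\area(W_n)$ by a tail of the Brjuno series $\sum_k\log q_{k+1}/q_k$, which tends to $0$ because $\alf$ is Brjuno. Hence $\area(\pc(P_\alf))=0$.

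The step I expect to be the main obstacle is exactly the last one: proving a bound on the Fatou-coordinate derivative (equivalently on each $|\Psi_k'|$) that is simultaneously uniform over the whole Inou--Shishikura class and sharp enough near the cusp to survive composition through infinitely many renormalization levels, with no hyperbolicity to absorb the errors. The Brjuno hypothesis must be used in a quantitatively tight way --- it is the convergence of $\sum_k\log q_{k+1}/q_k$ that offsets the growth of $q_n$ and the possibly enormous partial quotients, and any non-optimal loss in the derivative estimate would break the summation. Propagating the distortion of $\Xi_n$ and of the accompanying $P_\alf$-iterates through the estimate at once, while keeping track of the near-disjointness of the level-$n$ pieces around $\partial\Delta$, is the technical heart of the matter.
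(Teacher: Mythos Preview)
Your plan has a structural obstruction that the paper's argument is designed to avoid. The neighborhoods $W_n$ you propose are, up to notation, the paper's sets $\Omega^n_0$: a union of $\asymp q_{n+1}$ forward $P_\alf$-iterates of a level-$n$ fundamental sector landing at $0$. Restricted to the Siegel disk $\Delta_\alf$ these iterates act as irrational rotations, and the rotated sectors sweep out all of $\Delta_\alf$; hence $\Delta_\alf\subset W_n$ for every $n$ and $\area(W_n)\ge\area(\Delta_\alf)>0$. So the assertion that the optimal derivative estimate bounds $\area(W_n)$ by a tail of the Brjuno series cannot be correct, and the paper never attempts it---its Proposition~\ref{intersection-area} is stated for $\bigl(\cap_n\Omega^n_0\bigr)\cap J(P_\alf)$, not for $\cap_n\Omega^n_0$ alone.

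The paper's argument is instead pointwise. To each $z_0\in\bigl(\cap_n\Omega^n_0\bigr)\cap J(P_\alf)$ it associates a sequence $\zeta_n$ recording its position in the level-$n$ Fatou coordinate, and splits according to whether $\Im\zeta_n\ge D/\alf_n$ eventually (the set $\Gamma$) or $\Im\zeta_n<D/\alf_n$ infinitely often (the set $\Lambda$). Points of $\Lambda$ descend close to the critical orbit at infinitely many levels; at each such level one exhibits a definite hyperbolic ball disjoint from $\Omega^0_n$ and transports it back to level $0$ with bounded distortion, giving non-uniform porosity---no Brjuno hypothesis is used here. Points of $\Gamma$ stay high in the tower; one projects them to sets $B_j\subset\cc/\mathbb{Z}$ lying above height $D/\alf_j$, and the derivative estimate $|\chi_h'-\alf|\le C(\alf/r)e^{-2\pi\alf\Im w}$ yields both a height recursion $b_{j-1}\le\alf_j b_j+A$ and an area contraction $\area B_{j-1}\le\tfrac12\area B_j$ whenever $\alf_j$ is small. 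The Brjuno hypothesis enters only to cap the heights: Yoccoz's lower bound on the Siegel radius of $f_j$ gives $b_j\le\mathcal{B}(\alf_{j+1})/2\pi$, which combined with the recursion forces $\limsup_j b_j<\infty$, after which the area contraction finishes. Thus the Brjuno sum controls not the area of any neighborhood of $\pc(P_\alf)$, but the vertical extent of those trajectories that never descend in the tower.
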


When $\alf$ is non-Brjuno, the arithmetic of $\alf$ satisfies a particular property that we exploited in 
\cite{Ch10-I} to prove the existence of complementary holes at arbitrarily small scales on $\pc(P_\alf)\setminus \{0\}$. 
But that arithmetic property does not necessarily hold for all Brjuno values of $\alf$, and hence,
we need a different approach to prove the above theorem.
In particular, the global estimates that appeared in \cite{Ch10-I} I are not enough and 
we need to prove a uniform infinitesimal estimate on the changes of coordinates.

Note that by Mane's Theorem, the boundary of the Siegel disk is contained in the post-critical set, and hence, 
must have zero area by the above result.
The next statement is an immediate corollary of Theorem~\ref{pc-area}. 
\begin{cor}\label{non-recurrent}
For every Brjuno $\alf\in\irr$, almost every point in $J(P_\alf)$ is non-recurrent. 
In particular, there is no finite absolutely continuous invariant measure on $J(P_\alf)$.  
\end{cor}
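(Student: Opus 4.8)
The plan is to deduce everything from Theorem~\ref{pc-area} together with the fact recalled above from \cite{Ly83b} that $\pc(P_\alf)$ is the measure theoretic attractor of $P_\alf$ on $J(P_\alf)$. Applying the attractor property to the neighborhoods $N_{1/k}:=\{z\in\cc:\operatorname{dist}(z,\pc(P_\alf))<1/k\}$, $k\in\mathbb{N}$, and intersecting the resulting full-measure sets, one obtains a set $X\subseteq J(P_\alf)$ of full Lebesgue measure such that every $z\in X$ has its orbit eventually inside $N_{1/k}$ for every $k$; since $\pc(P_\alf)$ is closed this says exactly that $\omega(z)\subseteq\pc(P_\alf)$ for every $z\in X$. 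Now I invoke Theorem~\ref{pc-area}: $\pc(P_\alf)$ has zero area, so $X':=X\setminus\pc(P_\alf)$ is still of full Lebesgue measure in $J(P_\alf)$. For $z\in X'$ we have $\operatorname{dist}(z,\pc(P_\alf))>0$ while $\omega(z)\subseteq\pc(P_\alf)$, so $z\notin\omega(z)$; hence every point of $X'$ is non-recurrent, which is the first assertion.

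For the second assertion I would argue by contradiction using Poincar\'e recurrence. Suppose $\mu$ is a non-trivial finite Borel measure on $J(P_\alf)$, invariant under $P_\alf$, and absolutely continuous with respect to Lebesgue measure. Since $J(P_\alf)\subseteq\cc$ is second countable, fixing a countable base and applying the Poincar\'e recurrence theorem (valid for any, not necessarily invertible, measure preserving transformation of a finite measure space) to each basic open set shows that $\mu$-almost every point of $J(P_\alf)$ is recurrent. But by the first part the set of recurrent points in $J(P_\alf)$ has zero Lebesgue measure, hence zero $\mu$-measure because $\mu$ is absolutely continuous. Thus $\mu$ is carried by a Lebesgue-null, hence $\mu$-null, set, forcing $\mu=0$ and contradicting non-triviality.

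The genuine content here is entirely in Theorem~\ref{pc-area}; the rest is soft and presents no real obstacle. The only places that call for a little care are the order of the quantifiers in the attractor statement --- handled by the countable intersection over the $N_{1/k}$ above --- and the passage, in the Poincar\'e recurrence step, from measurable recurrence to topological recurrence, which is standard once one works with a countable base and recalls that invertibility of $P_\alf$ is not needed.
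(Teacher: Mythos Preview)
Your argument is correct and is exactly the standard deduction the paper has in mind: the text states this result as ``an immediate corollary of Theorem~\ref{pc-area}'' without supplying a proof, and your use of the Lyubich attractor property together with the zero-area of $\pc(P_\alf)$, followed by Poincar\'e recurrence for the second claim, is precisely how one unpacks that remark.
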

Theorem~\ref{pc-area} and its counterpart for non-Brjuno values of $\alf$ in \cite{Ch10-I} 
enable us to prove the following statement here. 
\begin{thm}\label{acc-on-cp}
For all $\alf\in\irr$, the limit set of the orbit of almost every point in $J(P_\alf)$ is equal to $\pc (P_\alf)$.
\end{thm}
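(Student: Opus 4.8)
\emph{Proof plan.} One inclusion is automatic: by Lyubich's theorem recalled above, for Lebesgue almost every $z\in J(P_\alf)$ the orbit of $z$ eventually stays in every neighbourhood of $\pc(P_\alf)$, so the set $\omega(z)$ of accumulation points of the orbit of $z$ satisfies $\omega(z)\subseteq\pc(P_\alf)$; and if $\area(J(P_\alf))=0$ the statement is vacuous, so we may assume $\area(J(P_\alf))>0$. For the reverse inclusion, put $D:=\{P_\alf\co{j}(-\ea/2):j\geq1\}$ (the critical orbit), a countable set dense in $\pc(P_\alf)$ by definition of $\pc(P_\alf)$. If for every $w\in D$ the orbit of almost every point of $J(P_\alf)$ accumulates at $w$, then intersecting the countably many full-measure sets so obtained shows that for almost every $z$ the orbit accumulates at every point of $D$, hence $\omega(z)\supseteq\overline D=\pc(P_\alf)$, and the theorem follows. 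So fix $w\in D$ and $\eps>0$ and set $E=E_{w,\eps}:=\{z\in J(P_\alf):P_\alf\co{n}(z)\notin B(w,\eps)\text{ for all }n\geq0\}$. The set of points whose orbit enters $B(w,\eps)$ only finitely often equals $\bigcup_{N\geq0}P_\alf^{-N}(E)$, and $P_\alf$ pulls back sets of zero area to sets of zero area; so, intersecting over $\eps=1/m$, $m\geq1$, it suffices to prove that $\area(E)=0$ for every $w\in D$ and every $\eps>0$.

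Here $E$ is forward invariant, $E\cap B(w,\eps)=\emptyset$, and by Lyubich's theorem almost every $z\in E$ has $\emptyset\neq\omega(z)\subseteq\pc(P_\alf)\setminus B(w,\eps)$. One must therefore exclude that a forward-invariant subset of $J(P_\alf)$ of positive area can accumulate onto $\pc(P_\alf)$ while permanently missing a fixed neighbourhood of a critical-orbit point. The delicate point is that the typical orbit of such a set might, a priori, concentrate in the limit on a proper closed invariant subset of $\pc(P_\alf)$ not containing $w$ — the boundary of the Siegel disk in the linearizable case, for instance — which, being contained in $\pc(P_\alf)$, has zero area by Theorem~\ref{pc-area} and its non-Brjuno analogue in \cite{Ch10-I}, but whose thinness alone does not force $\area(E)=0$.

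To rule this out I would run a Lebesgue-density and bounded-distortion argument through the near-parabolic renormalization tower of Inou--Shishikura, using the description of $\pc(P_\alf)$ and of the changes of coordinates between consecutive renormalization levels from \cite{Ch10-I}, together with — in the Brjuno regime, where $\pc(P_\alf)$ is only known to be thin and not porous — the uniform, level-independent estimate on the derivative of the Fatou coordinate proved here. Assuming $\area(E)>0$, choose a Lebesgue-density point $z_0$ of $E$. By Lyubich the orbit of $z_0$ returns into arbitrarily small neighbourhoods of $\pc(P_\alf)$, so $P_\alf\co{m_i}(z_0)\to p$ along a subsequence for some $p\in\pc(P_\alf)$. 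One then considers the inverse branches of the $P_\alf\co{m_i}$ that carry $p$ back towards $z_0$ — branches that should be univalent on a disk of definite radius, obtained by working slightly off $\pc(P_\alf)$, which has empty interior and, at the relevant scales, controlled geometry — and bounds their distortion uniformly in $i$ via the Inou--Shishikura compactness and the Fatou-coordinate estimate; transferring the density of $E$ at $z_0$ one would then conclude that $E$ fills, up to an arbitrarily small proportion, a disk of definite radius about $p$. Finally, exploiting the uniform expansion of the renormalization dynamics transverse to $\pc(P_\alf)$ near $0$ (again via the Fatou-coordinate derivative bound) and the density of the critical orbit in $\pc(P_\alf)$, a disk of definite radius about any point of $\pc(P_\alf)$ would be carried by finitely many iterates of $P_\alf$ onto a set covering a neighbourhood of $w$; since $E$ is forward invariant, $E$ would then fill a neighbourhood of $w$ up to an arbitrarily small proportion, contradicting $E\cap B(w,\eps)=\emptyset$. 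This would give $\area(E)=0$, completing the proof.

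The step I expect to be the main obstacle is the uniform bounded-distortion control of these deep inverse branches and of the forward spreading near $0$: the number of renormalization changes of coordinate grows without bound, the renormalization domains degenerate as the partial quotients $a_n\to\infty$, and the branches must be followed along a neighbourhood of the critical orbit. This is precisely the point at which the \emph{uniform} derivative estimate for the Fatou coordinate — rather than the global estimates of \cite{Ch10-I}, which are not enough for Brjuno $\alf$ — becomes indispensable, so that establishing that estimate (which also underlies Theorem~\ref{pc-area}) is the technical core on which the whole argument rests.
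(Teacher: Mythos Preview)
Your reduction is sound but can be sharpened: since $\omega(z)$ is closed and forward $P_\alf$-invariant, it suffices that for almost every $z\in J(P_\alf)$ the orbit accumulates on the single point $\cp$; then $\pc(P_\alf)\subseteq\omega(z)$ is automatic, and with Lyubich's inclusion you are done. The paper exploits exactly this and follows a route quite different from your density-point/inverse-branch scheme.

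The paper works with the nested neighbourhoods $\Omega^n_0\supset\Omega^{n+1}_0\supset\cdots\supset\pc(P_\alf)$ built from the renormalization tower (Proposition~\ref{neighbor}). By Lyubich, almost every orbit eventually stays in each $\Omega^n_0$; by Proposition~\ref{intersection-area} here (Brjuno case) together with its analogue in \cite{Ch10-I} (non-Brjuno case), $\bigcap_n\Omega^n_0\cap J(P_\alf)$ has area zero. Hence for almost every $z$ there are infinitely many transition times $t_i$ with $P_\alf\co{t_i}(z)\in\Omega^{n_i}_0\setminus\Omega^{n_i+1}_0$ and $P_\alf\co{t_i+1}(z)\in\Omega^{n_i+1}_0$. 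A short combinatorial trap (Lemma~\ref{steps}) then forces the orbit, at each such transition, through an explicit set $Q^{n_i}$ containing $\cp$; these $Q^n$ shrink to $\cp$ with uniformly bounded eccentricity (Lemma~\ref{nice&round}). The Fatou-coordinate estimate enters only through the zero-area statement, never through distortion control on inverse branches.

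The obstacle you flag in your own plan is real and, I believe, not closable along the lines you sketch. The inverse branches you would need are pull-backs of disks centred on $\pc(P_\alf)$, and the critical value lies in $\pc(P_\alf)$: there is no a priori univalence on disks of definite radius there, and the renormalization tower does not supply such branches directly. The ``forward spreading'' step---that a disk of definite radius about an arbitrary $p\in\pc(P_\alf)$ is carried by finitely many iterates onto a neighbourhood of the fixed $w$---is a transitivity/expansion statement on $\pc(P_\alf)$ that is essentially what you are trying to prove (take $p$ on the Siegel-disk boundary and $w$ on a decoration, or vice versa). The paper's approach sidesteps both difficulties: it never invokes distortion of deep inverse branches, and the only analytic input beyond Lyubich is the zero area of $\bigcap_n\Omega^n_0\cap J(P_\alf)$.
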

Let $\overline{\Delta}_\alf$ denote the closure of the Siegel disk of $P_\alf$.
By \cite{Her85} there are Brjuno values of $\alf$ for which $\overline{\Delta}_\alf$ does not contain the 
critical point, and therefore, $\pc(P_\alf)\setminus\overline{\Delta}_\alf$ is non-empty. 
Conjecturally, this set is homeomorphic to the Cantor bouquet minus its root. 
Our analysis of the post-critical set allows us to prove the following geometric property of these decorations.
\begin{thm}\label{thm:porosity-of-decorations}
For all $\alf\in\irr$, $\pc(P_\alf)$ is non-uniformly porous at every point in the set
$\pc(P_\alf)\setminus\overline{\Delta}_\alf$.  
\footnote{It follows from the proof of this theorem that under various arithmetic conditions on $\alf$, one can replace $\pc(P_\alf)\setminus\overline{\Delta}_\alf$ by $\pc(P_\alf)$. 
However, we do not believe that one can do this for all Brjuno $\alf$.} 
\end{thm}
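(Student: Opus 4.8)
The plan is to prove porosity at a point $z_0\in \pc(P_\alf)\setminus\overline{\Delta}_\alf$ by transporting, via the renormalization tower of Inou--Shishikura, the holes that occur in the post-critical set of the renormalized maps back down to the dynamical plane of $P_\alf$. Here is the mechanism I would use. For each renormalization level $n$, the post-critical set $\pc(P_\alf)$ intersected with the renormalization domain $\Dom(\rr\co{n}P_\alf)$ decomposes, under the projection that defines the $n$-th renormalization, into the pull-back of $\pc$ of the renormalized map together with ``decoration'' pieces attached along the sepals. Since $z_0\notin\overline{\Delta}_\alf$, the point $z_0$ is \emph{not} in the closure of the nested renormalization domains; rather, there is a largest level $n=n(z_0)$ beyond which $z_0$ escapes the domain of $\rr\co{n+1}P_\alf$. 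At that level, $z_0$ lies in one of the finitely many decoration pieces, and inside such a piece the post-critical set is, up to the change of coordinates, a scaled copy of a fixed compact model — a piece of the post-critical set of an Inou--Shishikura map, which is itself a proper subset of a Jordan domain and therefore has complementary open balls at a definite relative scale.

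The key steps, in order, would be: (1) Set up the renormalization tower for $P_\alf$ using the results quantified in \cite{Ch10-I}, recording the sequence of univalent changes of coordinates $\Phi_n$ between consecutive levels and the uniform bounds on their distortion on the relevant compact sets, which hold precisely because $\inf_i a_i\ge N$. (2) For $z_0\in\pc(P_\alf)\setminus\overline{\Delta}_\alf$, identify the level $n_0$ at which $z_0$ sits in a decoration attached to $\Dom(\rr\co{n_0}P_\alf)$ but not in the image of the next renormalization domain; this is where the hypothesis $z_0\notin\overline{\Delta}_\alf$ is used, since points of the Siegel disk boundary stay in \emph{all} the renormalization domains. (3) Show that the decoration piece containing $z_0$ is, under $\Phi_{n_0}\circ\dots\circ\Phi_1$ (composed with the relevant iterate of the Fatou coordinate and its inverse), a quasiconformal — in fact uniformly bi-Lipschitz on the scale in question — image of a fixed model set $E_*\subset\cc$ obtained from the Inou--Shishikura class, where the bi-Lipschitz constant is controlled independently of $n_0$ by the estimates of step (1), in particular by the uniform optimal bound on the derivative of the Fatou coordinate advertised in the abstract. (4) Since $E_*$ is a piece of a post-critical set of a map in the compact Inou--Shishikura class and hence omits a ball of definite relative radius $\rho_*>0$ near each of its points, pull this omitted ball back: its image under the (uniformly bi-Lipschitz) change of coordinates is a ball of radius comparable to $\rho_*$ times the scale, disjoint from $\pc(P_\alf)$, giving a hole at $z_0$ of relative size $\lambda$ depending only on $\rho_*$ and the uniform distortion constants — hence on $N$ alone. (5) Produce the sequence $r_n\to 0$ required by the definition of non-uniform porosity: either by iterating the construction using the self-similar structure of the decoration (nested sub-decorations at geometrically decreasing scales, each contributing a hole), or by noting that $z_0$ itself lies on the boundary of infinitely many such nested pieces; either way one gets infinitely many scales at which a $\lambda$-hole appears.

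I expect the main obstacle to be step (3) — establishing that the change of coordinates from the model decoration to the actual decoration in the plane of $P_\alf$ is uniformly bi-Lipschitz (or at least uniformly quasiconformal with controlled dilatation) \emph{on the correct scale}, uniformly over all renormalization levels. The composition $\Phi_{n_0}\circ\dots\circ\Phi_1$ a priori has distortion that could blow up with $n_0$; what saves us is that only the \emph{last} change of coordinate $\Phi_{n_0}$ genuinely acts on the decoration at level $n_0$ (the earlier ones merely transport the whole configuration without further subdivision of that particular piece), so the relevant distortion is that of a \emph{single} near-parabolic renormalization change of coordinate on a compact set where it is univalent — and this is exactly the content of the uniform optimal derivative estimate for the Fatou coordinate proved in this paper. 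Making this reduction precise, and checking that the Fatou-coordinate estimate applies on a region large enough to contain the decoration with definite margin, is the technical heart of the argument. A secondary difficulty is verifying the ``definite hole'' property $\rho_*>0$ for the model set $E_*$; this should follow from compactness of the Inou--Shishikura class together with the fact (used already in \cite{Sh10}) that the relevant Siegel-disk boundaries are Jordan curves, so that the model post-critical set genuinely has nonempty interior complement at every one of its points, but one must check this is \emph{uniform} over the class, which is where compactness enters. The footnote's remark — that under extra arithmetic conditions on $\alf$ one may replace $\pc(P_\alf)\setminus\overline{\Delta}_\alf$ by all of $\pc(P_\alf)$ — would follow because those arithmetic conditions force holes to appear already inside the nested renormalization domains (as in the non-Brjuno case of \cite{Ch10-I}), whereas for general Brjuno $\alf$ we have no such control on the part of $\pc(P_\alf)$ lying in $\overline{\Delta}_\alf$.
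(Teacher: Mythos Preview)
Your step (2) rests on a false premise. Points of $\pc(P_\alf)\setminus\overline{\Delta}_\alf$ do \emph{not} escape the renormalization tower at any finite level: by Proposition~\ref{neighbor}-ii the entire post-critical set is contained in $\Omega_0^n$ for \emph{every} $n$, so for any $z_0\in\pc(P_\alf)$ the sequence of pairs $(z_n,\zeta_n)$ is defined for all $n\geq 0$. There is no ``largest level $n_0$'' and hence no fixed ``model decoration piece $E_*$'' on which to run your compactness argument. What actually distinguishes $\overline{\Delta}_\alf$ from its complement inside $\cap_n\Omega_0^n$ is not escape, but the asymptotic behaviour of the heights $\Im\zeta_n$ along the tower.

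The paper's mechanism is the following. One shows (Lemma~\ref{decorations}) that every $z_0\in\pc(P_\alf)\setminus\overline{\Delta}_\alf$ lies in the set $\Lambda$, meaning $\Im\zeta_n<D/\alf_n$ for infinitely many $n$. This is precisely where the Fatou-coordinate derivative estimate (Proposition~\ref{main-estimate}) enters --- not, as you anticipated, for distortion control of the tower maps, but to prove the height recursion $b_{j-1}\le\alf_j b_j+A$ (Lemma~\ref{recursive-heights}) which, combined with the hyperbolic contraction of the changes of coordinates, forces the height-from-Siegel-boundary of a decoration point to be zero unless it drops infinitely often. At each level $n$ with $\Im\zeta_n<D/\alf_n$ one then produces an explicit hole: a hyperbolic ball $B_{\rho_n}(z_n',\delta_3)$ at bounded hyperbolic distance from $z_n$, lying \emph{below} the region occupied by $\Omega_n^0$ in Fatou coordinates (Lemma~\ref{L:free-balls}). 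This hole is transported to level $0$ by a chain $G_n=g_1\circ\dots\circ g_n$ whose distortion is controlled not by bi-Lipschitz bounds but by uniform contraction in the hyperbolic metrics of the slit domains $V_i$ (Lemma~\ref{L:nice-chain}); Koebe then gives round holes at scales tending to zero (Lemma~\ref{safe-jump}). Your steps (3)--(5), built on the nonexistent escape level, would need to be entirely replaced by this height-dichotomy argument.
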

\begin{figure}
\begin{center}
\begin{pspicture}(-.5,0)(5,4.1)
\psccurve[linewidth=.3pt,showpoints=false,fillstyle=solid,fillcolor=lightgray](1.8,1)(1.7,1.3)(1,1.5)(1.2,1.7)(.3,1.4)(0,1)(.6,.2)(1.4,.4)
\psdots[dotsize=1pt](1,3)(1,1)
\rput(.85,3.15){\small{$0$}}
\rput(.85,1.15){\small{$0$}}
\rput(.2,3.7){\small{$P_\alf$}}
\psccurve[linewidth=.3pt](2,3)(1,4)(.2,3)(1,2)
\pspolygon[fillstyle=solid,fillcolor=lightgray,linecolor=white](1,3)(1.6,2.4)(1.75,2.6)
\psline[linewidth=.6pt](1,3)(1.6,2.4)
\pscurve[linewidth=.6pt](1,3)(1.5,2.7)(1.75,2.6)
\psline[linewidth=.6pt](1.6,2.4)(1.75,2.6)
\psline[linecolor=gray,linewidth=.3pt]{->}(1.5,2.6)(1.7,2.9)
\psline[linecolor=gray,linewidth=.3pt]{->}(1.7,3)(1.5,3.3)
\psline[linecolor=gray,linewidth=.3pt]{->}(1.4,3.4)(1,3.5)
\psline[linecolor=gray,linewidth=.3pt]{->}(.9,3.42)(.6,3.2)
\psline[linecolor=gray,linewidth=.3pt]{->}(.6,3.1)(.7,2.7)
\psline[linecolor=gray,linewidth=.3pt]{->}(.76,2.64)(1.1,2.6)
\psline[linecolor=gray,linewidth=.3pt]{->}(1.12,2.6)(1.4,2.7)

\psline[linewidth=.3pt]{->}(1.7,2.7)(2.8,3)

\psellipse[linewidth=.3pt](3.4,3.8)(.3,.1)
\psellipse[linewidth=.3pt](3.4,2.4)(.3,.1)
\psline[linewidth=.3pt](3.1,3.8)(3.1,2.4)
\psline[linewidth=.3pt](3.7,3.8)(3.7,2.4)

\pscustom[linewidth=.3pt,fillstyle=solid,fillcolor=lightgray]{
\psline[linewidth=.3pt](3.1,3.8)(3.1,3.2)
\pscurve[linewidth=.3pt](3.1,3.2)(3.35,3.3)(3.25,3.1)(3.7,3.2)
\psline[linewidth=.3pt](3.7,3.2)(3.7,3.8)
\pscurve[linewidth=.3pt](3.7,3.8)(3.4,3.7)(3.1,3.8)}

\pscurve[linewidth=.3pt,linecolor=gray, linestyle=dashed](3.1,3.2)(3.4,3.25)(3.7,3.2)

\pscurve[linewidth=.3pt]{->}(3.8,3.4)(4.2,3.5)(3.8,3.6)
\rput(4.5,3.5){\small{$E_{P_\alf}$}}
\rput(3.9,2.2){\small{$\cc/\mathbb{Z}$}}

\psline[linewidth=.3pt]{->}(2.8,2.6)(2,1.6)
\rput(2.4,2){\small{$e^{2\pi \Bi z}$}}
\rput(0,1.55){\small{$\rr P_\alf$}}
\psframe[linewidth=.2pt](-.5,0)(5,4.1)
\end{pspicture}
\caption{Identifying the sides of a sector landing at zero under $P_\alf$ one obtains a half infinite cylinder 
which projects onto a neighborhood of zero under $e^{2\pi \Bi z}$. 
The return map to this sector under $P_\alf$ induces a map on that neighborhood, $\rr P_\alf$, 
called the renormalization of $P_\alf$.} 
\label{easy-renormalization}
\end{center}
\end{figure}
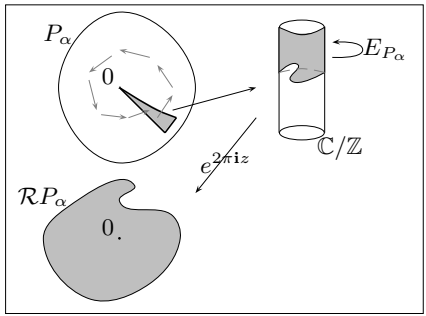

Given an analytic germ $f$ defined near zero with $f(0)=0$ and $|f'(0)|=1$, one blows up a small neighborhood 
of $0$ under a suitable covering transformation and obtains a new map $F$ defined near infinity that is close 
to the translation by one. 
The \textit{Fatou coordinate} of $F$ is a conformal change of coordinate that conjugates $F$ to the translation by one 
above a horizontal line.
Estimates on these changes of coordinates have been the core part of most of the results concerning 
the dynamics of near parabolic maps, see \cite{Sh98, ABC04, IS06, BC06, Ch10-I}, et al. 
However, the errors of the infinitesimal estimates proved so far, mainly in \cite{Sh00, Sh98, IS06}, 
have divergent integrals over infinite vertical strips of width one. 
In Section~\ref{S:perturbed-coordinate} we present a new estimate on the derivative of the Fatou 
coordinate with exponentially decaying (as a function of Imaginary part) error.
Indeed, we show that given a compact class of germs $f$ (in the compact open topology), the $L^1$ norm of 
the error (over infinite vertical strips) is uniformly bounded by a constant divided by the rotation of $f$ at $0$.
This estimate allows us to prove the results under the sharp Brjuno condition on $\alf$ here, and will be 
used to describe the statistical properties of the quadratic maps $P_\alf$ of high type in \cite{AC11}.
Moreover, one may use these estimates to answer some major optimality questions, like Herman's condition, 
in the class of quadratic maps of high type. 
Our approach to prove this estimate is different from the earlier ones, and is based on proposing an 
explicit $C^2$ change of coordinate that satisfies the optimal estimate and is
``nearly conformal'' and ``nearly harmonic''. 
It is an application of Green's Integral Formula (Hilbert transform) to compare the actual perturbed Fatou 
coordinate to this model.

For the parameters at the other end, i.e.\ when $\sup_i a_i<\infty$, the dynamic of $P_\alf$ has been 
beautifully described in \cite{Mc04}.
See \cite{BC04,BC07,Ya08,Blo10} and the references therein for some recent advancements in other 
aspects of the dynamics of these quadratic maps.
\subsection*{Frequently used notations}
\begin{itemize}
\setlength{\leftmargin}{-1em}
\item[--]$:=$ is used when a notation appears for the first time. 
\item[--]$\mathbb{Z}$, $\mathbb{Q}$, $\mathbb{R}$, $\mathbb{C}$ denote the integer, rational, real, and complex numbers, respectively.
\item[--]The bold face $\Bi$ denotes the imaginary unit complex number.
\item[--]$\Re z$, $\Im z$, and $|z|$ denote the real part, the imaginary part, and the absolute value of a complex number
$z$, respectively.
\item[--]$B(y,\delta)\subset \cc$ denotes the ball of radius $\delta$ around $y$ in the Euclidean metric.
Given a set $X\subseteq \cc$, $B(X, \delta):=\cup_{y\in X}B(y, \delta)$.
\item[--] $\daron(S)$ denotes the interior of a set $S\subset \cc$.
\item[--]$f\co{n}$ denotes the $n$ times composition of a map $f$ with itself, $f\co{0}=id$.
\item[--]$\Dom f$, $J(f)$, and $\pc(f)$ denote the domain of definition, the Julia set, and the post-critical 
set of a map $f$, respectively.
\item[--]Univalent map refers to a one to one holomorphic map.
\item[--]Given $g\colon\!\!\Dom g\ra\cc$, with only one critical point in its domain of definition, $\cp_g$ 
and $\cv_g$ denote the critical point and the critical value of $g$, respectively. 
\item[--] For $x\in\mathbb{R}$, $\lfloor x \rfloor$ denotes the largest integer less than or equal to $x$.
\end{itemize}   
\renewcommand{\thethm}{\thesection.\arabic{thm}}
\section{Preliminaries on renormalization}\label{sec:prelim} 
\subsection{Inou-Shishikura class}
Consider the cubic polynomial $P(z):=z(1+z)^2$. 
This polynomial has a \textit{parabolic} fixed point at $0$, that is, a fixed point of multiplier $e^{2\pi\alf\Bi}$
with $\alf\in \mathbb{Q}$. 
It has a critical point at $\cp_P:=-1/3$ with $P(\cp_P):=\cv_P=-4/27$, and another 
critical point at $-1$ which is mapped to $0$ under $P$.

Consider the ellipse 
\[E:= \Big\{x+\Bi y\in \cc\mid (\frac{x+0.18}{1.24})^2+(\frac{y}{1.04})^2\leq 1 \Big\},\]
and let 
\begin{equation}\label{U}
U:= g(\RS \setminus E), \text{ where } g(z):=-\frac{4z}{(1+z)^2}.
\end{equation}
The domain $U$ contains $0$ and $\cp_P$, but not $-1$.
Following \cite{IS06}, we define the classes of maps
\begin{displaymath}
\ff\!:=\Big\{f:=P\circ \vfi^{-1}\!\!:U_f \rightarrow \cc \;\Big|%
\begin{array}{l} 
\text{$\vfi\colon U \ra U_f$ is univalent, $\vfi(0)=0$, $\vfi'(0)=1$,}\\ 
\text{and $\vfi$ has a quasi-conformal\footnotemark{} 
extension to $\cc$.} 
\end{array}
\Big\},
\end{displaymath}
and, 
\[\ff_A:=\{f(\ea z) \mid f\in \ff, \textrm{ and } \alf \in A\}, \text{ where } A\subseteq \mathbb{R}.\]\footnotetext{See \cite{Ah66} for the definition of quasi-conformal mappings.}
Abusing the notation, $\ff_\beta$ denotes the set $\ff_{\{\beta\}}$, for $\beta\in \mathbb{R}$.

Every map in $\ff$ has a parabolic fixed point at $0$ and a unique critical point at 
$\cp_f:=\vfi(-1/3)\in U_f$. 

Consider a map $h\colon \!\!\Dom h \ra \cc$, where $\Dom h$ denotes the domain of definition (always assumed to be open) of $h$. 
Given a compact set $K\subset \Dom h$ and an $\eps>0$, a neighborhood of $h$ in the compact-open topology 
is defined as 
\[\mathcal{N}(h;K,\eps):=\{g\colon\!\! \Dom g\ra \cc \mid K\subset \Dom g , \textrm{ and } \sup_{z\in K} 
|g(z)-h(z)|<\eps\}.\]
In this topology, a sequence $h_n:\Dom h_n\ra \cc$, $n=1,2,\dots$, \textit{converges} to a map $h$ if for any neighborhood of $h$ defined as above, $h_n$ is contained in that neighborhood for sufficiently large $n$. 
Note that the maps $h_n$ need not be defined on the same domains.

The class $\ff_A$ naturally embeds into the space of univalent maps on the unit disk with a neutral fixed 
point at $0$. 
Therefore, it is a precompact class in the compact-open topology.
In particular, it follows from the K\"{o}ebe distortion Theorem that $\{h''(0)\mid h\in \ff_{\mathbb{R}}\}$ is 
relatively compact in $\mathbb{C}\setminus \{0\}$. 

Any map $h=\ea f\in\ff_{\alf}$ has a fixed point at $0$ with multiplier $\ea$. 
Moreover, if $\alf$ is small, $h$ has another fixed point $\sigma_h\neq 0$ near $0$ in $U_h$. 
The $\sigma_h$ fixed point depends continuously on $h$ and has asymptotic expansion 
$\sigma_h=-4\pi \alf \Bi/f''(0)+o(\alf)$, when $h$ converges to $f\in\ff$ in a fixed neighborhood of $0$. 
Clearly $\sigma_h \ra 0$ as $\alf \ra 0$.  

The following theorem introduces a useful coordinate to study the local dynamics of maps in $\ff_{\alf}$. 
See Figure~\ref{petal} for a geometric description of the following Theorem.
\begin{thm}[Inou--Shishikura \cite{IS06}]\label{Ino-Shi1} 
There exists $\alf_*\in (0,1)$ such that for every $h\colon U_h \ra \cc$ in $\ff_\alf$ (or $h=P_\alf:\cc\ra\cc$) 
with $\alf \in (0,\alf_*]$, there exist a domain $\p_h \subset U_h$ and a univalent map $\Phi_h\colon \p_h \ra \cc$
satisfying the following properties:
\begin{itemize}  
\item[i.] The domain $\p_h$ is bounded by piecewise smooth curves and is compactly contained in $U_h$. 
Moreover, it contains $\cp_h$, $0$, and $\sigma_h$ on its boundary.
\item[ii.] There exists a continuous branch of argument defined on $\p_h$ such that 
\[\max_{w,w'\in \p_h} |\arg(w)-\arg(w')|\leq 2 \pi \hat{\Bk}.\]
\item[iii.] $\Phi_h(\p_h)=\{w \in \cc \mid 0 < \Re(w) < \lfloor 1/\alf \rfloor-\Bk\}$, 
$\Im \Phi_h(z) \ra +\infty$ as $z\in \p_h\ra 0$, and $\Im \Phi_h(z)\ra-\infty$ as $z \in \p_h \ra \sigma_h$.
\item[iv.] $\Phi_h$ satisfies the Abel functional equation on $\p_h$, that is, 
\[\Phi_h(h(z))=\Phi_h(z)+1, \text{ whenever $z$ and $h(z)$ belong to $\p_h$}.\] 
Furthermore, $\Phi_h$ is unique once normalized by $\Phi_h(\cp_h)=0$.
\item[v.] The normalized map $\Phi_h$ depends continuously on $h$.     
\end{itemize}
\end{thm}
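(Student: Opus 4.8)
The plan is to reduce the statement to the local dynamics of $h$ near its neutral fixed point $0$ and to realize $\Phi_h$ as a controlled perturbation of the classical parabolic Fatou coordinate of a map in $\ff$. First I would record the rigidity of the class: every $f\in\ff$ has the form $f=P\circ\vfi^{-1}$ with $\vfi$ univalent on $U$, $\vfi(0)=0$, $\vfi'(0)=1$, so near $0$ one has $f(z)=z+c_f z^2+O(z^3)$ with $c_f=f''(0)/2$; since $\{f''(0)\mid f\in\ff\}$ is relatively compact in $\cc\setminus\{0\}$ (noted above) and $\ff$ is precompact in the compact-open topology, every estimate below can be arranged uniformly in $f$. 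For $h=\ea f\in\ff_\alf$ with $\alf$ small, $h$ has the two simple fixed points $0$ and $\sigma_h=-4\pi\alf\Bi/f''(0)+o(\alf)$ and is a small perturbation of the parabolic map $f$; the map $h=P_\alf$ has the same local normal form $z+2z^2+\cdots$ with fixed points $0$ and $\sigma_{P_\alf}$ of size $\asymp\alf$, and is treated identically.

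Next I would pass to a blow-up coordinate. Let $M_h$ be the M\"obius map with $M_h(0)=\infty$ and $M_h(\sigma_h)=0$ (roughly $z\mapsto 1-\sigma_h/z$) and set $\tau_h:=\tfrac{-1}{2\pi\alf\Bi}\log\circ M_h$ for a suitable branch of the logarithm; in the coordinate $w=\tau_h(z)$ the map $h$ becomes $F_h(w)=w+1+\eps_h(w)$ on a strip-like domain $\Omega_h$ whose end over $0$ lies near $+\Bi\infty$ and whose end over $\sigma_h$ lies near $-\Bi\infty$, with $\eps_h\to 0$ towards the $0$-end uniformly in $f$. (When $\alf=0$ this degenerates to the classical coordinate $w=-1/(c_f z)$ of the parabolic $f$, in which the Fatou coordinate on a petal is the telescoping limit $\lim_n\bigl(F\co{n}(w)-n\bigr)$ up to an additive constant.) The crucial quantitative point is that the logarithm turns one full turn of the petal around $0$ (an interval of $\arg z$ of length $\asymp 2\pi$) into a horizontal translation of length $\asymp 1/\alf$; this, with the uniform decay of $\eps_h$, is the source of the width $\lfloor 1/\alf\rfloor-\Bk$ in~(iii), the uniform loss $\Bk$ absorbing finitely many fundamental domains near the two ends.

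Then I would construct $\Phi_h$. Fix an initial fundamental crescent $C_h\subset\Omega_h$ one of whose boundary arcs passes through (the image of) $\cp_h$, with $F_h(C_h)$ the adjacent crescent; define $\Phi_h$ on $C_h$ by an explicit $C^1$ primitive normalized so that $\Phi_h(\cp_h)=0$, and propagate it through a domain $\p_h$ by the Abel equation $\Phi_h(h(z))=\Phi_h(z)+1$, applying forward and backward $h$-iterates until the orbit exits $\Omega_h$ near $\sigma_h$ (after $\asymp 1/\alf$ steps) or climbs towards $0$. Since $F_h$ is uniformly $C^0$-close to $w\mapsto w+1$ on the simply connected domain $\Omega_h$, standard perturbative arguments give that $\Phi_h$ is univalent with image the asserted vertical strip, $\Im\Phi_h\to+\infty$ towards $0$ and $\Im\Phi_h\to-\infty$ towards $\sigma_h$; part~(iv) is built into the construction. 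Part~(ii) holds because the angular spread of $\p_h$ at $0$ is controlled by the attracting and repelling directions of $f$ there, which for $c_f$ in a fixed compact subset of $\cc\setminus\{0\}$ are uniformly bounded, giving $2\pi\hat\Bk$. Part~(i): by the design of the ellipse $E$ and the domain $U$ (arranged so that the relevant petal of the model $P$, hence those of $f$ and of $h$, lie in a compact subset of $U$), $\p_h$ is compactly contained in $U_h=\vfi(U)$, bounded by piecewise smooth curves, with $\cp_h$, $0$, $\sigma_h$ on its boundary. Part~(v) follows from continuity of $\sigma_h$, of $\tau_h$, of the crescent $C_h$, and of the (uniformly convergent) propagation, together with the rigidity of the normalization $\Phi_h(\cp_h)=0$.

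The main obstacle is the \emph{uniformity}: producing a single $\alf_*\in(0,1)$ and single constants $\Bk,\hat\Bk$ valid for all $f\in\ff$ (and for $P_\alf$), so that the strip width is genuinely $1/\alf+O(1)$. This requires (a) uniform control, over the compact class $\ff$, of the geometry of the parabolic petal of $f$ — its size, shape, and attracting/repelling directions — from normality and from $c_f$ lying in a fixed compact subset of $\cc\setminus\{0\}$; (b) a uniform bound on the error $\eps_h=F_h-\mathrm{id}-1$ strong enough that $\p_h$ extends across nearly the full $1/\alf$-wide strip before the $h$-orbit leaves $U_h$; and (c) a uniform margin in $\p_h\subset U_h$, i.e.\ the petal does not spill out of the (normalized, hence uniformly large) domain $U_h$. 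Once these uniform estimates are in hand, parts~(i)--(v) are bookkeeping, and the entire-map case $h=P_\alf$ costs nothing extra since only a bounded neighborhood of $0$ is used.
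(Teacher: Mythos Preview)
The paper does not give a proof of this theorem; it is quoted from Inou--Shishikura. The paragraph immediately following the statement says that all parts follow from Theorem~2.1, Main Theorems~1 and~3, and Corollary~4.2 of \cite{IS06}, with the uniformity of $\Bk$ and $\hat{\Bk}$ in (ii)--(iii) handled by the further argument in \cite[Proposition~12]{BC06}. So there is no in-paper proof to compare against.

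That said, your outline is along the lines of the actual construction: the covering $\tau_h(w)=\sigma_h/(1-e^{-2\pi\Bi\alf w})$ you describe is exactly the one the paper introduces in Section~\ref{S:perturbed-coordinate} (there used to estimate $\Phi_h'$, not to build $\Phi_h$). But your sketch defers the hard part. The phrases ``standard perturbative arguments give that $\Phi_h$ is univalent with image the asserted vertical strip'' and ``by the design of the ellipse $E$'' for the compact containment of $\p_h$ in $U_h$ are precisely where the content of \cite{IS06} lies: the uniform constants and the inclusion $\p_h\subset U_h$ come from explicit, rather delicate numerical estimates on the model $P$ and the specific domain $U$ of~\eqref{U}, not from abstract compactness of $\ff$ alone. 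Your item~(c) names this obstacle correctly, but naming it is not resolving it.
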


\begin{figure}[ht]
\begin{center}
  \begin{pspicture}(10.4,5.3)
\epsfxsize=5.6cm 
\rput(2.85,2.81){\epsfbox{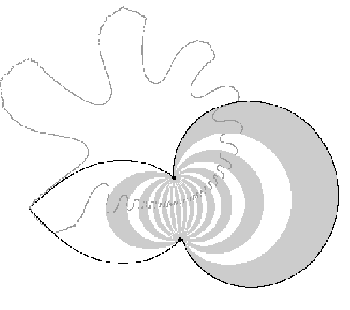}}
  \psset{xunit=.7cm}
  \psset{yunit=.7cm}
     \pspolygon*[linecolor=lightgray,fillcolor=lightgray](10.5,.5)(10.5,6.5)(11,6.5)(11,.5)
     \pspolygon*[linecolor=lightgray,fillcolor=lightgray](9.5,.5)(9.5,6.5)(10,6.5)(10,.5)
     \pspolygon*[linecolor=lightgray,fillcolor=lightgray](13,.5)(13,6.5)(13.5,6.5)(13.5,.5)
     \pspolygon*[linecolor=lightgray,fillcolor=lightgray](12,.5)(12,6.5)(12.5,6.5)(12.5,.5) 
     \psline(13.5,3.4)(13.5,3.6)
     \psline(13,3.4)(13,3.6)
     \psline(12.5,3.4)(12.5,3.6)
     \psline(12,3.4)(12,3.6)

     \psline(9.5,3.4)(9.5,3.6)
     \psline(10,3.4)(10,3.6)
     \psline(10.5,3.4)(10.5,3.6)
     \psline(11,3.4)(11,3.6)
    
     \rput(8.8,3.1){$0$}
     \rput(9.5,3.1){$1$}
     \rput(10,3.1){$2$}
     \rput(10.5,3.1){$3$}
     \rput(11,3.1){$4$}

     \rput(11.5,4){$\cdots$}
     \psaxes*[linewidth=1.2pt,labels=none,ticks=none]{->}(9,3.5)(8.5,.5)(14,6.5)
     \pscurve{->}(6.8,4.3)(8.5,5)(10,4.5)
     \rput(8.2,5.4){$\Phi_h$}
     \rput(4.6,3.6){$0$}
     \rput(4.8,1.9){$\sigma_h$}
     \psdots(.7,2.8)
     \rput(.1,3){$\text{cp}_h$}
     \rput(14,3){$\frac{1}{\alpha}-\Bk$}
     \rput(6.5,2.2){$\mathcal{P}_h$}
     \rput(1,7){$U_h$}
   \end{pspicture}
\caption{A perturbed Fatou coordinate $\Phi_h$ and its domain of definition $\mathcal{P}_h$. 
The first few iterates of $\cp_h$ under $h$ appears in the Figure as the boundary of the amoeba containing $0$.}
\label{petal}
\end{center}
\end{figure}
The map $\Phi_h: \mathcal{P}_h\to \cc$ obtained in the above theorem is called the \textit{perturbed Fatou coordinate}, 
or the \textit{Fatou coordinate} for short,  of $h$.

The class $\ff$ is denoted by $\mathcal{F}_1$ in \cite{IS06}. 
All parts in the above theorem, except the existence of uniform $\hat{\Bk}$ and $\Bk$ in ii. and iii., follow readily 
from Theorem 2.1, Main Theorems 1, 3, and Corollary 4.2 in \cite{IS06}. 
Parts ii. and iii. also follow from those results but require some extra work. 
A detailed treatment of these statements are given in \cite[Proposition 12]{BC06}.        

\subsection{Renormalization}
Let $h\colon U_h \ra \cc$ either be in $\ff_{\alf}$ or be the quadratic polynomial $P_\alf$, with $\alf$ in $(0,\alf_*]$. 
Let $\Phi_h\colon \p_h \ra \cc$ denote the normalized Fatou coordinate of $h$.
Define 
\begin{equation}\label{sector-def}
\begin{aligned}
&\C_h:=\{z\in \p_h : 1/2 \leq \Re(\Phi_h(z)) \leq 3/2 \: ,\: -2< \Im \Phi_h(z) \leq 2 \},\mathrm{and} \\
&\Csh_h:=\{z\in \p_h : 1/2 \leq \Re(\Phi_h(z)) \leq 3/2 \: , \: 2\leq \Im \Phi_h(z) \}.
\end{aligned}
\end{equation}
By definition, $\cv_h\in \daron(\C_h)$ and $0\in \partial(\Csh_h)$. 

Assume for a moment that there exists a positive integer $k_h$, depending on $h$, with the following properties:
\begin{itemize}
\item For every integer $k$, with $0\leq k \leq k_h$, there exists a unique connected component of $h^{-k}(\Csh_h)$ 
which is compactly contained in $\Dom h$ and contains $0$ on its boundary. We denote this component by 
$(\Csh_h)^{-k}$. 
\item For every integer $k$, with $0\leq k \leq k_h$, there exists a unique connected component of $h^{-k}(\C_h)$ which has 
non-empty intersection with $(\Csh_h)^{-k}$, and is compactly contained in $\Dom h$. 
This component is denoted by $\C_h^{-k}$. 
\item The sets $\C_h^{-k_h}$ and $(\Csh_h)^{-k_h}$ are contained in 
\[\{z\in\p_h \mid  \frac{1}{2}< \Re \Phi_h(z) <\frac{1}{\alf}-\Bk-\frac{1}{2}\}.\] 
\item The maps $h: \C_h^{-k}\to \C_h^{-k+1}$, for $2\leq k \leq k_h$, and $h: (\Csh_h)^{-k}\to (\Csh_h)^{-k+1}$, for $1\leq k \leq k_h$, are univalent. 
The map $h: \C_h^{-1}\to \C_h$ is a degree two branched covering.
\end{itemize}
Let $k_h$ be the smallest positive integer satisfying the above four properties, and define
\[S_h:=\C_h^{-k_h}\cup(\Csh_h)^{-k_h}.\]
\begin{figure}[ht]
\begin{center}
 \begin{pspicture}(-.5,1.2)(11.4,9)
\epsfxsize=6.3cm
\rput(3.5,5.9){\epsfbox{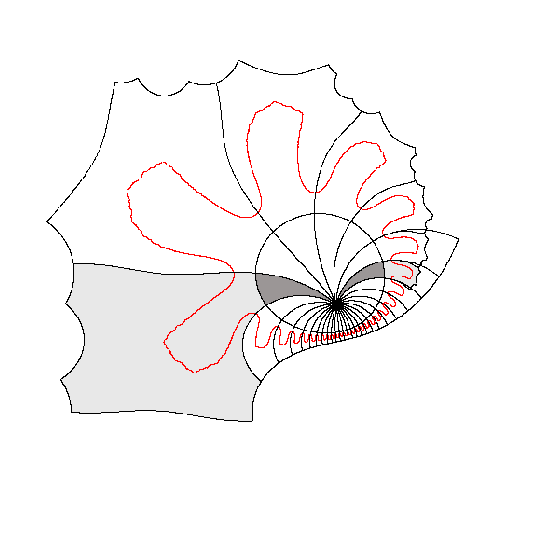}}  
  \psset{xunit=1cm}
  \psset{yunit=1cm}
    \pscurve[linewidth=.6pt,linestyle=dashed,linecolor=black]{->}(5.3,5.1)(5.3,5.5)(5.2,5.8)
    \pscurve[linewidth=.6pt,linestyle=dashed,linecolor=black]{->}(2.2,3.7)(2.3,4.1)(2.2,4.5)
    \pscurve[linewidth=.6pt,linestyle=dashed,linecolor=black]{->}(.7,6.4)(2,6.4)(3.2,6.2)(3.6,5.6)
    \rput(5.3,4.7){$S_h$}
    \rput(2.2,3.4){$\C_h^{-1}$}
    \rput(.1,6.4){$(\Csh_h)^{-1}$}
    \psdots[dotsize=2pt](2.3,5.04)(3.4,4.97)
    \rput(2.,5){\small{$\cp_h$}}
    \rput(3.33,4.77){\small{$\cv_h$}}
\newgray{Lgray}{.99}
\newgray{LLgray}{.88}
\newgray{LLLgray}{.70}
\psdots(7.6,5.5)(8,5.5)(11,5.5)
\pspolygon[fillstyle=solid,fillcolor=LLLgray](7.8,7.3)(7.8,6.1)(8.2,6.1)(8.2,7.3)
\pspolygon[fillstyle=solid,fillcolor=LLgray](7.8,6.1)(7.8,4.9)(8.2,4.9)(8.2,6.1)

\pspolygon[fillstyle=solid,fillcolor=LLLgray](10.2,7.3)(10.05,6.2)(10.45,6.2)(10.6,7.3)
\pspolygon[fillstyle=solid,fillcolor=LLgray](10.05,6.2)(9.8,5.1)(9.9,5)(10,4.97)(10.1,5)(10.2,5.1)(10.45,6.2)
\psdots(7.6,5.5)(8,5.5)
\psline{->}(7.6,5.5)(11.3,5.5)
\psline{->}(7.6,4.7)(7.6,7.3)
\rput(8,5.3){\tiny{$1$}}
\rput(10.9,5.3){\tiny{$\frac{1}{\alf}-\Bk$}}
\rput(7.3,4.9){\tiny{$-2$}}

\psline{->}(6,5.9)(7.5,5.9)
\rput(6.8,6.1){$\Phi$}

\pscurve[linestyle=dashed]{<-}(7.9,6.7)(8.9,6.9)(10.3,6.7)
\rput(9.1,7.1){\tiny{induced map}}

\psline{->}(7.4,4.7)(6.6,3.2)
\rput(7.7,4.){$e^{2\pi \Bi w}$}
\psellipse(6,2.1)(1.2,.9)
\psdots[dotsize=2pt](6,2.1)
\rput(4,2.6){$\rr' (h)$}
\rput(6.2,2.1){\small{$0$}}
\rput(1,8){$h$}
\psline[linewidth=.5pt]{->}(5.7,1.55)(5.85,1.45)(6.05,1.4)
\NormalCoor
\psdot[dotsize=1pt](5.65,1.6)
\psdot[dotsize=1pt](6.1,1.4)
 \end{pspicture}
\caption{The figure shows the sets $\C_h$, $\Csh_h$,..., $\C_h^{-k_h}$, and 
$(\Csh_h)^{-k_h}$. 
The ``induced map'' projects via $e^{2\pi \Bi w}$ to a map $\rr (h)$ defined near $0$.}
\label{sectorpix}
\end{center}
\end{figure}

Consider the map 
\begin{equation}\label{renorm-def}
\Phi_h \circ h\co{k_h} \circ \Phi_h^{-1}:\Phi_h(S_h) \ra \cc. 
\end{equation}
By the Abel functional equation, this map projects via $z=\frac{-4}{27}e^{2 \pi 
\Bi w}$ to a well-defined map $\rr' (h)$, defined on a set containing $0$ in its interior. 
One can see that $\rr (h)$ has asymptotic expansion $e^{2 \pi \frac{-1}{\alf}\Bi}z+ O(z^2)$ near $0$, 
see Figure~ \ref{sectorpix}. 

The conjugate map $s\circ \rr' (h)\circ s^{-1}$, where $s(z):=\bar{z}$ denotes the complex conjugation map, 
is of the form $z \mapsto e^{2 \pi \frac{1}{\alf}\Bi}z+O(z^2)$ near $0$. 
The map  $\rr (h):= s\circ \rr' (h)\circ s^{-1}$, restricted to the interior of $s(\frac{-4}{27}e^{2\pi \Bi(\Phi_h(S_h))})$, 
is called the \textit{near-parabolic renormalization} of $h$ by Inou and Shishikura. 
We simply refer to it as the \textit{renormalization} of $h$. 
It is clear that one time iterating $\rr(h)$ corresponds to several times iterating $h$, through the change of 
coordinates, see Lemma~\ref{renorm}. 
For some applications of a closely related renormalization (Douady-Ghys renormalization) one may 
see \cite{Do86,Do94,Yoc95,Sh98,ABC04} and the references therein.

The following theorem \cite[Main theorem 3]{IS06} states that this definition of renormalization $\rr$ 
can be carried out for perturbations of maps in $\ff$. 
In particular, this implies the existence of $k_h$ satisfying the four properties listed in the definition of 
the renormalization. 
(See \cite[Proposition 13]{BC06} for a detailed argument on this, \footnote{The sets $\C_h^{-k}$ and 
$(\Csh_h)^{-k}$ defined here are (strictly) contained in the sets denoted by $V^{-k}$ 
and $W^{-k}$ in \cite{BC06}. 
The set $\Phi_h(\C_h^{-k}\cup (\Csh_h)^{-k})$ is contained in the union 
\[D^\sharp_{-k} \cup D_{-k} \cup D''_{-k} \cup D'_{-k+1} \cup D_{-k+1} \cup D^\sharp_{-k+1}\] 
in the notation used in \cite[Section 5.A]{IS06}.}.)

Define 
\begin{equation}\label{V}
V:=P^{-1}(B(0,\frac{4}{27}e^{4\pi}))\setminus((-\infty,-1]\cup B)
\end{equation}
where $B$ is the component of $P^{-1}(B(0,\frac{4}{27}e^{-4\pi}))$ containing $-1$ (see Figure~\ref{poly}). 

\begin{figure}[ht]
\begin{center}
  \begin{pspicture}(8,3.2)
  \rput(4.5,1.6){\epsfbox{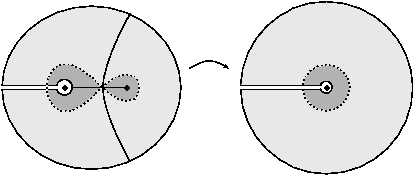}}
      \rput(6.12,1.6){{\small $\times$}}
      \rput(5.9,1.4){{\small $cv_P$}}
      \rput(6.7,1.8){{\small $0$}} 
      \rput(2.7,1.6){{\small $\times$}}
      \rput(2.7,1.3){{\small $cp_P$}}
      \rput(3.2,1.8){{\small $0$}}
      \rput(1.85,1.3){{\small $-1$}}
      \rput(4.5,2.2){{\small $P$}}
      \rput(1.1,2.7){{\small $V$}} 
\end{pspicture}
\caption{A schematic presentation of the polynomial $P$; its domain, and its range. 
Similar colors and line styles are mapped on one another.}
\label{poly}
\end{center}
\end{figure}
By an explicit calculation (see \cite[Proposition 5.2]{IS06}) one can see that the closure of $U$ is contained in the interior of $V$.   
\begin{thm}[Inou-Shishikura]\label{Ino-Shi2} 
There exists a constant $\alf^*>0$ such that if $h \in \ff_\alf\cup\{P_\alf\}$ with $\alf \in (0,\alf^*]$, then $\rr(h)$ is 
well-defined and belongs to the class $\ff_{1/\alf}$. 
Moreover, with the representation $\rr(h):=e^{\frac{2\pi}{\alf}\Bi}\cdot P \circ \psi^{-1}$, the map $\psi:U\to \cc$ 
extends to a univalent map on $V$.
\end{thm}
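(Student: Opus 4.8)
The plan is to reduce the assertion to the corresponding statement for the \emph{parabolic} renormalization and then to run a compactness-and-perturbation argument; thus the work splits into a ``hard'' parabolic part, in which the explicit cubic $P$ and the domains $U$, $V$ carry the bookkeeping, and a ``soft'' perturbative part resting on Theorem~\ref{Ino-Shi1}.

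First I would treat a map $f\in\ff$ with a genuine parabolic fixed point (its multiplier is $1$, since $\ff=\ff_0$). Such an $f$ has attracting and repelling Fatou coordinates $\Phi^{\mathrm{att}}_f$, $\Phi^{\mathrm{rep}}_f$ on its petals, each conjugating $f$ to the unit translation; following \'Ecalle--Voronin, one forms the horn map $\Phi^{\mathrm{att}}_f\circ(\Phi^{\mathrm{rep}}_f)^{-1}$ near $\pm\Bi\infty$. It commutes with the unit translation, hence descends under $z=\tfrac{-4}{27}e^{2\pi\Bi w}$ to a germ fixing $0$; conjugating by $z\mapsto\bar z$ to fix the sense of rotation gives the parabolic renormalization $\rr_0(f)$. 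The heart of this case is to show that, once normalized by $\Phi^{\mathrm{att}}_f(\cv_f)=0$, the germ $\rr_0(f)$ equals $P\circ\psi^{-1}$ with $\psi$ univalent on $V$, $\psi(0)=0$, $\psi'(0)=1$, and $\psi$ admitting a quasiconformal extension to $\cc$. Two ingredients are needed: (a) an effective description of $(\Phi^{\mathrm{rep}}_f)^{-1}$, showing that it extends over a definite region, so that the pullbacks of the horn map along iterates of $f$ are defined and land back in the repelling petal; and (b) a topological identification exhibiting the horn map, after these pullbacks, as a branched covering with a single critical point of local degree two — inherited from $\cp_f$, the only critical point of $f$ in $U_f$ — whose covering-and-puncture combinatorics coincide with those of $P$ restricted to $V$ (that is, $P$ on $P^{-1}(B(0,\tfrac{4}{27}e^{4\pi}))$ with the slit $(-\infty,-1]$ and the component around $-1$ removed). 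Matching these two branched coverings produces the conformal isomorphism $\psi^{-1}$ and identifies its domain with $V$; the ellipse $E$ in~(\ref{U}) and the constants $e^{\pm4\pi}$ in~(\ref{V}) are precisely reverse-engineered so that this domain is neither smaller nor larger than $V$. The quasiconformal extension of $\psi$ to $\cc$ is then built by interpolating between $\psi$ and a fixed affine map outside a compact set, with dilatation bounded uniformly over the precompact class $\ff$.

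Next, for $h\in\ff_\alf\cup\{P_\alf\}$ with $\alf$ small, I would argue by perturbation of this picture. By Theorem~\ref{Ino-Shi1} the Fatou coordinate $\Phi_h\colon\p_h\to\cc$ exists with $\Phi_h(\p_h)=\{0<\Re w<\lfloor1/\alf\rfloor-\Bk\}$ and a uniformly bounded argument on $\p_h$. An orbit starting in $\C_h$ needs about $\lfloor1/\alf\rfloor$ iterates of $h$ to cross $\p_h$ (each iterate raises $\Re\Phi_h$ by one), which pins down $k_h=\lfloor1/\alf\rfloor-O(1)$; the remaining four properties defining $k_h$ — existence and uniqueness of the components $\C_h^{-k}$, $(\Csh_h)^{-k}$, their univalence and the degree-two branching of $h\colon\C_h^{-1}\to\C_h$, and the inclusion $\C_h^{-k_h}\cup(\Csh_h)^{-k_h}\subset\{\tfrac12<\Re\Phi_h<\tfrac1\alf-\Bk-\tfrac12\}$ — are \emph{open} conditions that hold in the parabolic limit $\alf\to0$ (there they reduce to the petal geometry and the extension of $(\Phi^{\mathrm{rep}}_f)^{-1}$ from the previous step), so by the continuous dependence of $\Phi_h$ on $h$ (Theorem~\ref{Ino-Shi1}(v)) together with precompactness of $\ff_{[0,\alf_*]}\cup\{P_\alf:\alf\in[0,\alf_*]\}$ they persist for $0<\alf\le\alf^*$ once $\alf^*$ is small. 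Hence $\rr(h)$ is well defined. By the Abel equation (Theorem~\ref{Ino-Shi1}(iv)), $\Phi_h\circ h\co{k_h}\circ\Phi_h^{-1}$ commutes with the unit translation on $\Phi_h(S_h)$, so it descends under $z=\tfrac{-4}{27}e^{2\pi\Bi w}$ to $\rr'(h)$, whose linear part one computes to be $e^{-2\pi\Bi/\alf}$ (the near-parabolic analogue of the fact that renormalization acts on rotation numbers by the Gauss map); conjugating by $z\mapsto\bar z$ gives $\rr(h)$ with multiplier $e^{2\pi\Bi/\alf}$. Finally, as $\alf\to0$ with $h\to f\in\ff$, the whole package (Fatou coordinate, the components $\C_h^{-k}$, $(\Csh_h)^{-k}$, and the branched-covering structure of $\Phi_h\circ h\co{k_h}\circ\Phi_h^{-1}$) converges to the parabolic data above, so $\rr(h)\to\rr_0(f)$ and $\rr(h)$ inherits its structure for $\alf$ small: a single critical point of local degree two, a domain conformally a copy of $U$, and a factorization $e^{2\pi\Bi/\alf}\,P\circ\psi^{-1}$ with $\psi$ a small perturbation of the parabolic $\psi$. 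The same perturbation argument applied on $V$ (the parabolic $\psi$ is univalent on a domain containing $\overline V$, and $\overline U\subset\daron V$) shows $\psi$ extends univalently to $V$, and its quasiconformal extension to $\cc$ is inherited with uniform dilatation. Thus $\rr(h)\in\ff_{1/\alf}$, as claimed.

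The main obstacle is the parabolic step. Producing the conformal isomorphism $\psi^{-1}$ and checking that its domain is \emph{exactly} $V$ for every $f\in\ff$ is precisely where one must match the branched-covering combinatorics of the horn map with that of $P\colon V\to B(0,\tfrac{4}{27}e^{4\pi})$, and this is what forces the explicit choices of the cubic $P(z)=z(1+z)^2$, the ellipse $E$, and the constants $e^{\pm4\pi}$; verifying that these choices actually close up is the substantial computation underlying \cite[Main Theorems~1 and~3]{IS06}. By contrast the existence of $k_h$, the descent under the exponential, and the identification of $\rr(h)$ with $P\circ\psi^{-1}$ for small $\alf$ are soft consequences of compactness, the Abel equation, and the continuous dependence in Theorem~\ref{Ino-Shi1}.
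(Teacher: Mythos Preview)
The paper does not give its own proof of this theorem: it is quoted as \cite[Main Theorem~3]{IS06} and used as a black box (the surrounding discussion only explains why the definition of $\rr$ makes sense and refers to \cite{IS06} and \cite{BC06} for details). So there is no in-paper proof to compare your sketch against. That said, your outline is broadly faithful to the Inou--Shishikura strategy (parabolic renormalization first, then a compactness-and-continuity argument to pass to small $\alf$), and the identification of the horn map with $P|_V$ via matching branched-covering data is the right picture.

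There is, however, a concrete misunderstanding in your perturbative step. You write that ``an orbit starting in $\C_h$ needs about $\lfloor1/\alf\rfloor$ iterates of $h$ to cross $\p_h$ \ldots\ which pins down $k_h=\lfloor1/\alf\rfloor-O(1)$''. This is not what $k_h$ is. By definition $S_h=\C_h^{-k_h}\cup(\Csh_h)^{-k_h}$ is obtained by pulling $\C_h\cup\Csh_h$ back $k_h$ times; these pullbacks leave $\p_h$, wind once around $0$, and re-enter $\p_h$ near its \emph{right} edge after a \emph{bounded} number of steps --- indeed Lemma~\ref{turning} in the paper shows $k_h\le\Bk''$ uniformly in $h$ and $\alf$. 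The $\lfloor1/\alf\rfloor$ you have in mind is the transit time across $\p_h$ in the Fatou coordinate, which enters the picture only \emph{after} one composes with $\Phi_h$ and reduces mod~$1$ (this is exactly what produces the multiplier $e^{2\pi\Bi/\alf}$). Conflating these two counts would wreck the compactness argument: if $k_h$ really blew up like $1/\alf$, the family of compositions $h\co{k_h}$ would not sit in any compact class and the ``open conditions persist'' step would fail. Once you correct $k_h$ to a uniformly bounded integer, your perturbation argument is the right one.
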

We need the constant $ \Bk''$ defined by the next lemma.
\begin{lem}\label{turning}
There exists a $\Bk''\in \mathbb{Z}$ such that for every $h \in \ff_\alf\cup\{P_\alf\}$ 
with $\alf \in (0,\alf^*]$, $k_h\leq \Bk''$. 
\end{lem}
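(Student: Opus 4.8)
The plan is to derive the uniform bound from Theorem~\ref{Ino-Shi2} by a compactness argument. For an individual $h$ the integer $k_h$ is finite: this is built into the construction preceding Theorem~\ref{Ino-Shi2}, which guarantees that the four properties listed in the definition of the renormalization hold for some positive integer, hence for a least one (see also \cite[Proposition~13]{BC06}). It thus suffices to show that $\sup k_h<\infty$, the supremum being taken over all $h\in\ff_\alf\cup\{P_\alf\}$ with $\alf\in(0,\alf^*]$; this supremum, being a bounded supremum of positive integers, is then attained, and we take $\Bk''$ to be its value.

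Assume the supremum is infinite and choose $h_n\in\ff_{\alf_n}\cup\{P_{\alf_n}\}$, $\alf_n\in(0,\alf^*]$, with $k_{h_n}\to\infty$. Recall that $\ff_A$ embeds into the compact family of univalent self-maps of the unit disk fixing $0$ with derivative of modulus one there, so $\ff_{[0,\alf^*]}\cup\{P_\alf\mid\alf\in[0,\alf^*]\}$ is precompact in the compact-open topology. Passing to a subsequence, $\alf_n\to\alf_\infty\in[0,\alf^*]$ and $h_n\to h_\infty$, where $h_\infty$ still carries the structure needed to run the renormalization construction --- a well-defined holomorphic map with the prescribed critical point, to which Theorem~\ref{Ino-Shi1} and Theorem~\ref{Ino-Shi2} apply when $\alf_\infty>0$, and the classical parabolic Fatou coordinate and parabolic renormalization apply when $\alf_\infty=0$ --- so a finite analogue $k_{h_\infty}$ exists. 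All the data defining $k_h$ vary continuously with $h$: the Fatou coordinate $\Phi_h$ by Theorem~\ref{Ino-Shi1}(v), the points $\cp_h$, $\cv_h$ and the map $h$ trivially, and therefore the sets $\C_h$, $\Csh_h$ and their distinguished pull-backs $\C_h^{-k}$, $(\Csh_h)^{-k}$ --- the last by Hurwitz's theorem applied to the proper preimage components named in the definition, whose degree-two (only at $h\colon\C_h^{-1}\to\C_h$, with ramification point $\cp_h$ depending continuously on $h$) or univalent character persists under small perturbations. The four defining conditions --- compact containment in $\Dom h$; containment in the open set $\{z\in\p_h\mid 1/2<\Re\Phi_h(z)<1/\alf-\Bk-1/2\}$; univalence of the listed restrictions; and uniqueness of the distinguished components --- are open, so the finite integer $k_{h_\infty}$ satisfies all four of them for $h_n$ as soon as $n$ is large. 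Hence $k_{h_n}\le k_{h_\infty}$ for all large $n$, contradicting $k_{h_n}\to\infty$, and the bound follows.

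The step requiring the most care, and which I expect to be the main obstacle, is the degenerate case $\alf_\infty=0$: then $h_\infty$ is parabolic, the perturbed Fatou coordinate opens up into the classical Fatou coordinate on an attracting petal, the region $\{0<\Re\Phi_h<1/\alf-\Bk\}$ widening into an unbounded strip as $1/\alf\to\infty$, and one must verify both that the finitely many iterates realizing the re-entry of the pull-backs $\C_{h_n}^{-k}$, $(\Csh_{h_n})^{-k}$ into $\p_{h_n}$ converge to their parabolic counterparts, and that, for $n$ large, these pull-backs still land strictly to the left of $\Re\Phi_{h_n}=1/\alf_n-\Bk-1/2$. This rests on the convergence of the perturbed Fatou coordinates to the parabolic one, uniformly on compact subsets of the parabolic petal, as in \cite{Sh00,IS06} and \cite[Proposition~13]{BC06}; away from this degeneration the argument is a routine normal-families and Hurwitz-theorem perturbation. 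One could also extract the bound directly from the compactness arguments already contained in \cite{IS06} and \cite{BC06}.
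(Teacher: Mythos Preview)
Your compactness argument is correct in outline, and the degenerate case $\alf_\infty=0$ you flag is indeed the only place requiring real care; the convergence of the perturbed Fatou coordinates to the parabolic one on compacta (as in \cite{IS06}, Main Theorem~3 and its proof) does supply what you need there.

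However, the paper's proof takes a genuinely different and more direct route, avoiding compactness altogether. Instead of passing to a limit, it reads off the bound from the geometry of the renormalized map. The key observation is that $k_h$ measures how much a certain curve in $\Dom \rr(h)$, namely the $\ex$-projection of a curve in $\Phi_h(S_h)$ whose image under the renormalization transition is a vertical ray, spirals around $0$: the real-part spread of a lift of this curve is at least $k_h-\Bk-4$. Under $\rr(h)$ this spiralling curve is sent to a straight ray landing at $0$. Now Theorem~\ref{Ino-Shi2} says that $\rr(h)=e^{2\pi\Bi/\alf}P\circ\psi^{-1}$ with $\psi$ extending univalently over $V\Supset U$, so Koebe distortion bounds uniformly the amount any preimage under $\rr(h)$ of a straight ray can spiral. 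This bounds $k_h-\Bk-4$ directly.

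The trade-off: your approach is soft and would work in any setting where the renormalization data depend continuously on the map and a limiting object exists, but it gives no handle on the size of $\Bk''$ and forces you through the parabolic degeneration. The paper's argument is tied to the specific structural conclusion of Theorem~\ref{Ino-Shi2} (the univalent extension to $V$), but it is uniform from the start, treats all $\alf\in(0,\alf^*]$ at once, and in principle makes $\Bk''$ effective.
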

\begin{proof}
Comparing with the rotation of angle $\alf$, there exists an integer $j\in [\Bk, \Bk+3]$ such that 
for every $w\in \C_h\cup \Csh_h$ close to zero, there exists a unique inverse orbit $w, h^{-1}(w), \dots, h^{-j}(w)$, 
contained in a neighborhood of zero, with $h^{-j}(w)\in \p_h$. 
As $k_h$ is the smallest positive integer with $\C_h^{-k_h}\cup(\Csh_h)^{-k_h}\subseteq \p_h$, we have $k_h\geq \Bk$.

Now let $\gamma:[0,\infty)\to \Phi_h(\p_h)$ be a continuous curve satisfying the following.
\begin{itemize}
\item[--]$\Phi_h \circ h\co{k_h}\circ \Phi_h^{-1}(\gamma(t))= s+ t\Bi$, for $t\in (-2,\infty)$, and a fixed 
$s\in [1/2,3/2]$;
\item[--]$\diam \{\Re \gamma(t)\mid t\in [0,\infty)\}\geq k_h-\Bk-4$.  
\end{itemize}
The curve $\hat{\gamma}:=\ex \circ \gamma$ lands at zero,  and part of it must spiral at least $k_h-\Bk-4$ 
times around zero. 
By the definition of renormalization, $\rr (h)$ maps $\hat{\gamma}$ to a straight ray landing at zero. 
On the other hand, by Theorem~\ref{Ino-Shi2}, $\rr (h)$ is of the form $e^{2\pi\alf \Bi}\cdot P\circ \psi: U\to \cc$ with
$\psi$ extending univalently 
over the larger domain $V$ which compactly contains $U$. 
By the Koebe distortion Theorem, the total spiraling of the pull back of a straight ray landing at zero under $\rr (h)$ must be 
uniformly bounded by some constant independent of $h$. 
That is, $k_h-\Bk-4$ is uniformly bounded from above.
\end{proof}

We also need the following lemma in the sequel. 
\begin{lem}\label{L:extra-space}
There exists a positive constant $\delta_1\leq 1/8$ such that for every $h$ in $\ff_\alf\cup\{P_\alf\}$ 
with $\alf \in (0,\alf^*]$ we have
\begin{itemize}
\item $\forall j \in \mathbb{Z},\; \ex(B(j,\delta_1)) \subset \daron (\C_h),$
\item for every $\xi\in\cc$ with $\ex(\xi)\in \cup_{j=0}^{k_h+\lfloor 1/\alpha\rfloor-\Bk-1}h\co{j}(S_h)$, 
$\ex (B(\xi,\delta_1))\subset \Dom h$.
\end{itemize}
\end{lem}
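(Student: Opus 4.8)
The plan is to prove each of the two inclusions, with a definite Euclidean margin, for a fixed $h$, and then upgrade to a common $\delta_1$ using that the class $\ff_\alf\cup\{P_\alf\}$ with $\alf\in(0,\alf^*]$ is precompact in the compact-open topology and that all the data attached to $h$ --- the normalized Fatou coordinate $\Phi_h$ (hence $\C_h$, $\Csh_h$ and $S_h$), the integer $k_h$, the iterates $h\co j(S_h)$ and the domain $\Dom h$ --- depend continuously on $h$. This continuity is Theorem~\ref{Ino-Shi1}(v) for $\Phi_h$; for the remaining objects it follows from the renormalization construction of Theorem~\ref{Ino-Shi2} together with the uniform bound $k_h\le\Bk''$ of Lemma~\ref{turning}.

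For the first inclusion, recall $\ex(w)=-\tfrac4{27}e^{2\pi\Bi w}$; this has period one, so $\ex(B(j,\delta_1))=\ex(B(0,\delta_1))$ is, for every $j\in\mathbb{Z}$, the same neighbourhood of $\ex(0)=-\tfrac4{27}$, of Euclidean diameter $O(\delta_1)$. For $h\in\ff_\alf$ one computes $\cv_h=P(\cp_P)=-\tfrac4{27}=\ex(0)$, and the Abel equation with the normalization $\Phi_h(\cp_h)=0$ gives $\Phi_h(\cv_h)=1$, which lies at distance $1/2$ from the boundary of the rectangle $\Phi_h(\C_h)=\{\tfrac12\le\Re w\le\tfrac32,\ -2<\Im w\le 2\}$; hence $\Phi_h^{-1}(B(1,\tfrac14))\subset\daron(\C_h)$ and contains $\cv_h$. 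By the continuous dependence of $\Phi_h$ and precompactness, there is a uniform $\rho>0$ with $B(\cv_h,\rho)\subset\daron(\C_h)$ for all such $h$, so it suffices to fix $\delta_1\le\tfrac18$ with $\ex(B(0,\delta_1))\subset B(-\tfrac4{27},\rho)$. For $h=P_\alf$ one argues in the same way after the affine conjugacy normalizing the critical value to $-\tfrac4{27}$ (or directly, checking $-\tfrac4{27}\in\daron(\C_{P_\alf})$ with a uniform margin via continuity in $\alf$).

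For the second inclusion, write $T_h:=\bigcup_{j=0}^{k_h+\lfloor1/\alf\rfloor-\Bk-1}h\co j(S_h)$. The key remark is that $\ex$ is ``multiplicatively Lipschitz'': $\ex(w')=\ex(\xi)\,e^{2\pi\Bi(w'-\xi)}$, so $\ex(B(\xi,\delta_1))$ has diameter at most $4\pi e^{2\pi\delta_1}\delta_1|\ex(\xi)|$ --- which is $O(\delta_1)\cdot|\ex(\xi)|$, hence small automatically wherever $T_h$ approaches the parabolic fixed point $0$. So, once one knows that $\operatorname{dist}(T_h,\partial(\Dom h))\ge d>0$ and $\sup_{z\in T_h}|z|\le M$ with $d,M$ uniform in $h$, the inclusion holds for any fixed $\delta_1\le\tfrac18$ with $4\pi\delta_1 M<d$. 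To get the containment of $T_h$: the first $k_h+1$ iterates of $S_h$ are precisely the preimage sectors $\C_h^{-k}\cup(\Csh_h)^{-k}$, $0\le k\le k_h$, which are compactly contained in $\Dom h$ by the construction of the renormalization and are finite in number since $k_h\le\Bk''$; the remaining iterates $h\co i(\C_h\cup\Csh_h)$, $1\le i\le\lfloor1/\alf\rfloor-\Bk-1$, lie in $\p_h$ --- with possibly the last one slipping just past the edge of $\p_h$ near $\sigma_h$, which still lies in $\Dom h$ with a uniform margin --- and $\overline{\p_h}$ is compactly contained in $\Dom h$ by Theorem~\ref{Ino-Shi1}(i). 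For $h=P_\alf$ the inclusion is trivial, as $\Dom P_\alf=\cc$.

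The genuinely delicate point is the uniformity as $\alf\to 0$: then $\lfloor1/\alf\rfloor\to\infty$, so $T_h$ is assembled from unboundedly many iterates whose sectors wind arbitrarily often around $0$, and the petal $\p_h$ itself grows; hence the uniform bounds $d$ and $M$ above cannot be read off from the compactness of a single family of sets, but must be extracted from the uniform control built into the Inou--Shishikura construction (Theorems~\ref{Ino-Shi1} and~\ref{Ino-Shi2}, Lemma~\ref{turning}) and from continuity down to the parabolic limit. I also expect the bookkeeping of the ``overshooting'' last iterate, and of the jumps of the integer $k_h$ as $\alf$ varies, to require some care, though neither seems a serious obstruction.
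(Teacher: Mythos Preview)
Your proof is correct and follows essentially the same approach as the paper: both arguments rest on compactness of the class $\ff$, continuous dependence of the Fatou coordinate (Theorem~\ref{Ino-Shi1}(v)), the uniform bound $k_h\le\Bk''$ of Lemma~\ref{turning}, and the compact containment of $\overline{\p_h}$ in $\Dom h$.  The paper is terser: it simply asserts the existence of a uniform $\delta>0$ with $B(-4/27,\delta)\subset\C_h$ and $B(T_h,\delta)\subset\Dom h$, and deduces both items directly from these Euclidean inclusions.

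Your multiplicative-Lipschitz observation for $\ex$ is nice but not strictly needed: since $0\in\Dom h$ with a uniform margin (again by compactness of the class), the Euclidean $\delta$-neighbourhood of $T_h$ already sits in $\Dom h$ even where $T_h$ accumulates at $0$, so one can argue purely additively as the paper does.  Your closing worries about $\alf\to 0$ are well placed, but they are resolved exactly by the ingredients you cite---compactness of $\ff$ down to the parabolic limit and the fact that the unbounded tail of iterates $h\co{i}(\C_h\cup\Csh_h)$ all lie in $\p_h$, whose closure is uniformly compactly contained in $\Dom h$; the paper simply takes this for granted rather than flagging it.
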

\begin{proof}
The sets $\C_h^{-i}\cup (\Csh_h)^{-i}$, for $i=0,1,2,\dots k_h$, are compactly contained in $\Dom h$, 
therefore,  $\cup_{j=0}^{k_h+\lfloor 1/\alpha\rfloor-\Bk-1}h\co{j}(S_h)$ is compactly contained in $\Dom h$. 
By the compactness of the class $\ff$, and the continues dependence of the Fatou coordinates on the map, as well as 
Lemma~\ref{turning}, there exists $\delta>0$ such that 
\[B(-4/27, \delta)\subseteq \C_h, \text{ and }
B(\cup_{j=0}^{k_h+\lfloor 1/\alpha\rfloor-\Bk-1}h\co{j}(S_h), \delta)\subseteq \Dom h.\]
Both statements in the lemma follow from the above inclusions. 
\end{proof}

Let $[0; a_1,a_2, \dots]$ denote the continued fraction expansion of $\alf$ as 
\[\alf=\cfrac{1}{a_1+\cfrac{1}{a_2+\cfrac{1}{a_3+\dots} }}\] 
Define $\alf_0:=\alf$, and inductively for $i\geq1$ define $\alf_i\in (0,1)$ as 
\[\alf_i:=\frac{1}{\alf_{i-1}}\;  (\textrm{ mod } 1).\] 
For every $i\geq 0$, $a_{i+1}:=\lfloor 1/\alpha_i\rfloor$ and $\alf_i=[0; a_{i+1}, a_{i+2}, \dots]$. 

If we fix a constant $N\geq 1/\alf^*$, then $\alf\in \irr$ implies that $\alf_{j}\in (0,\alf^*)$, for $j=0,1,2,\dots$.  
We use this constant $N$ in the rest of this paper.

Let $\alf\in \irr$ and define $f_0:=P_\alf$. 
Then, using Theorem~\ref{Ino-Shi2}, inductively define the sequence of maps 
\[f_{n+1}:=\rr (f_n): \Dom f_{n+1} \to \cc.\]
For every $n$,  
\[f_n:\Dom f_n \to \cc,\; f_n(0)=0,\; \text{ and }  f_n'(0)= e^{2\pi \alf_n \Bi}.\]
\section{The renormalization tower}\label{sec:tower}
\subsection{Changes of coordinates and sectors around the fixed point}
\begin{rem}
To simplify the technical details of exposition, we assume that 
\begin{equation}\label{alf*-1}
N\geq \Bk+\hat{\Bk}+1.
\end{equation}
The reason for this is to make $\Phi_{f_n}(\p_{f_n})$ wide enough to contain a set that will be 
defined in a moment. 
However, one can avoid this condition by extending $\Phi_{f_n}$ and $\Phi_{f_n}^{-1}$ to 
larger domains, using the dynamics of $f_n$. 
See Subsection~\ref{SS:extensions}.
\end{rem}
For $n\geq 0$, let $\Phi_n:=\Phi_{f_n}$ denote the Fatou coordinate of $f_n\colon \Dom f_n \ra \cc$ 
defined on the set $\p_n:=\p_{f_n}$. 
For our convenience we define the covering map  
\[\ex(\zeta):= \zeta \longmapsto \frac{-4}{27} s (e^{2\pi \Bi \zeta}):\cc \ra \cc^*, \text{ where } s(z)=\bar{z}.\]
By part ii of Theorem~\ref{Ino-Shi1}, Inequality~\eqref{alf*-1}, and that $\p_n$ is simply connected, there is an inverse branch of $\ex$ that maps $\p_n$ into the range of $\Phi_{n-1}$; 
\[\eta_n: \p_n\to \Phi_{n-1}(\p_{n-1}).\]
There may be several choices for this map but we choose one of them (for each $n$) with
\begin{equation*}
\Re (\eta_n(\p_n))\subset [0,\hat{\Bk}+1], 
\end{equation*}
and fix this choice for the rest of this note. 
Now, define 
\[\psi_n :=\Phi_{n-1}^{-1} \circ \eta_n  \colon \p_{n}\ra \p_{n-1}.\] 
Each $\psi_n$ extends continuously to $0\in \partial \p_n$ by mapping it to $0$. 
Let $\Psi_1:=\psi_1$, and for $n\geq 2$ form the composition
\[\Psi_n:=\psi_1 \circ \psi_2 \circ \dots \circ \psi_n\colon \p_n \ra \p_0.\] 

For every $n\geq 0$, let $\C_n$ and $\Csh_n$ denote the corresponding sets for $f_n$ defined in \eqref{sector-def} 
(i.e., replace $h$ by $f_n$). 
Denote by $k_n$ the smallest positive integer with  
\[S_n^0:=\C_n^{-k_n}\cup (\Csh_n)^{-k_n}\subset \{z\in \p_n \mid 0< \Re \Phi_n(z)< \lfloor \frac{1}{\alf_n}\rfloor -\Bk-1\}.\]
For every $n\geq 0$ and $i\geq 2$, define the sectors 
\begin{gather*}
S_n^1:=\psi_{n+1}(S_{n+1}^0)\subset \p_n, \text {and} \\
S_n^i:=\psi_{n+1}\circ \dots \circ\psi_{n+i}(S_{n+i}^0)\subset \p_n.
\end{gather*}
By definition, the critical value of $f_n$ is contained in $f_n\co{k_n}(S_n^0)$.
Also, all these sectors contain $0$ on their boundaries. 
We will mainly work with $S^i_0$, for $i\geq 0$. 
\begin{lem}\label{renorm}
Let $z\in\p_n$ be a point with $w\!\!:=\ex\circ \Phi_n(z)\in \Dom f_{n+1}$.
There exists an integer $\ell_z$ with  $1\leq \ell_z \leq \lfloor 1/\alf_n\rfloor-\Bk+k_n-1$, 
such that
\begin{itemize}
\item the finite orbit $z,f_{n}(z),f_{n}\co{2}(z),\dots,f_{n}\co{\ell_z}(z)$ is defined, and $f_{n}\co{\ell_z}(z)\in \p_{n}$;
\item $\ex\circ \Phi_{n}(f_{n}\co{\ell_z}(z))= f_{n+1}(w)$.
\end{itemize}
\end{lem}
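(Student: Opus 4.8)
The plan is to unwind the definition of the renormalization $f_{n+1}=\rr(f_n)$ and translate the statement ``$f_{n+1}(w)$ is defined'' back into a statement about the orbit of $z$ under $f_n$. Recall that $\rr(f_n)=s\circ \rr'(f_n)\circ s^{-1}$, where $\rr'(f_n)$ is the projection, via the covering $\ex$, of the map $\Phi_n\circ f_n\co{k_n}\circ \Phi_n^{-1}$ defined on $\Phi_n(S_n^0)$. The hypothesis $w=\ex\circ\Phi_n(z)\in\Dom f_{n+1}$ says precisely that $\Phi_n(z)$ lies in the domain of a lift of $\rr(f_n)$; equivalently, after applying $s$, that $\Phi_n(z)$ (or a $\mathbb{Z}$-translate of it) lies in $\Phi_n(S_n^0)$ up to the identifications made by $\ex$. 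So the first step is to locate, using the Abel functional equation $\Phi_n\circ f_n=\Phi_n+1$ on $\p_n$ and the periodicity of $\ex$, an integer $m$ with $0\le m\le \lfloor 1/\alf_n\rfloor-\Bk-1$ such that $f_n\co{m}(z)\in \p_n$ and $\Phi_n(f_n\co{m}(z))=\Phi_n(z)+m$ lies in $\Phi_n(S_n^0)$; the bound on $m$ comes from part iii of Theorem~\ref{Ino-Shi1}, which says $\Re\Phi_n$ ranges over an interval of length $\lfloor 1/\alf_n\rfloor-\Bk$, together with the fact that $S_n^0$ sits in the real-part window $(0,\lfloor 1/\alf_n\rfloor-\Bk-1)$.

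The second step is to apply the $k_n$-fold iterate. Once $f_n\co{m}(z)$ has been brought into $\Phi_n^{-1}(\Phi_n(S_n^0))$, i.e.\ into $S_n^0=\C_n^{-k_n}\cup(\Csh_n)^{-k_n}$, the four defining properties of the renormalization (the bulleted list preceding the definition of $S_h$, together with Lemma~\ref{turning} which bounds $k_n\le\Bk''$) guarantee that the finite orbit $f_n\co{m}(z), f_n\co{m+1}(z),\dots, f_n\co{m+k_n}(z)$ is defined and lands back in $\p_n$, with $\Phi_n(f_n\co{m+k_n}(z))=\big(\Phi_n\circ f_n\co{k_n}\circ\Phi_n^{-1}\big)(\Phi_n(f_n\co m(z)))$. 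Setting $\ell_z:=m+k_n$ we get $1\le \ell_z\le (\lfloor 1/\alf_n\rfloor-\Bk-1)+k_n = \lfloor 1/\alf_n\rfloor-\Bk+k_n-1$, which is exactly the claimed range (and $\ell_z\ge 1$ since $k_n\ge 1$). That the orbit stays in $\Dom f_n$ through all these steps is where one invokes the compact containment of $\C_n^{-k}\cup(\Csh_n)^{-k}$ in $\Dom f_n$, as recorded in the proof of Lemma~\ref{L:extra-space}.

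The third step is bookkeeping with the covering maps to verify the formula $\ex\circ\Phi_n(f_n\co{\ell_z}(z))=f_{n+1}(w)$. We have $\ex\circ\Phi_n(f_n\co m(z))=\ex(\Phi_n(z)+m)=\ex(\Phi_n(z))=w$ because $\ex$ is invariant under integer translation, and then $\ex\big(\Phi_n(f_n\co{m+k_n}(z))\big)=\ex\big((\Phi_n\circ f_n\co{k_n}\circ\Phi_n^{-1})(\Phi_n(f_n\co m(z)))\big)$ is, by the very definition of $\rr'(f_n)$ as the projection of that map under $\ex$, equal to $\rr'(f_n)(w)$. Finally, since $w$ lies in $\Dom f_{n+1}=\daron\big(s(\tfrac{-4}{27}e^{2\pi\Bi\Phi_n(S_n^0)})\big)$ and $\rr(f_n)=s\circ\rr'(f_n)\circ s^{-1}$ while $\ex$ already carries the conjugation $s$, a short check of where the $s$'s sit shows $\rr'(f_n)(w)=f_{n+1}(w)$; one must be careful here that $w$ is a genuine point in $\Dom f_{n+1}$ and not its conjugate, but this is forced by the hypothesis.

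I expect the main obstacle to be the careful accounting in the first step: making sure the translate $m$ exists and falls in the stated window requires checking that $\Phi_n(z)$, modulo the periodicity of $\ex$ and modulo the conjugation built into $\ex$ versus the conjugation built into $\rr(f_n)$, really does land inside $\Phi_n(S_n^0)$ rather than merely near it, and that no wrapping issue inflates $\ell_z$ beyond the bound. The role of the Remark assuming $N\ge\Bk+\hat{\Bk}+1$, which guarantees $\Phi_n(\p_n)$ is wide enough to contain the relevant translates of $S_n^0$, is what makes this step clean; without it one would need the extensions of $\Phi_n$ alluded to in Subsection~\ref{SS:extensions}. The remaining steps are essentially formal consequences of the Abel equation and the definitions.
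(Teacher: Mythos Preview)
Your overall strategy---lift the hypothesis $w\in\Dom f_{n+1}$ back through the definition of renormalization, use the $\mathbb{Z}$-periodicity of $\ex$ and the Abel equation to locate a point of $S_n^0$ in the $f_n$-orbit of $z$, then iterate $k_n$ more times---is exactly the paper's. The bookkeeping in your third step is also fine; since $\ex$ already carries the conjugation $s$, the identity $\ex\circ\Phi_n(f_n^{\ell_z}(z))=f_{n+1}(w)$ drops out directly.

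The gap is in your first step, specifically the restriction $m\ge 0$. You are iterating $z$ \emph{forward} to reach $S_n^0$, but this can fail: if $\Re\Phi_n(z)$ is already larger than the real parts occurring in $\Phi_n(S_n^0)$, no non-negative $m$ will bring $\Phi_n(z)+m$ into $\Phi_n(S_n^0)$ before leaving the strip $\Phi_n(\p_n)$. (Geometrically, $\Phi_n(S_n^0)$ sits near the right end of the strip but not at the very edge, so $z$ can lie strictly to its right; see Figure~\ref{sectorpix}.) The paper handles this by working in the other direction: it first fixes $\zeta\in\Phi_n(S_n^0)$ with $\ex(\zeta)=w$ (such $\zeta$ exists by the definition of $\Dom f_{n+1}$), and then writes $\zeta=\Phi_n(z)+\ell$ with $\ell$ allowed to be negative, in fact $-k_n+1\le\ell\le\lfloor 1/\alf_n\rfloor-\Bk-1$. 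The Abel equation then gives $\Phi_n\circ f_n^{k_n}\circ\Phi_n^{-1}(\zeta)=\Phi_n(f_n^{k_n+\ell}(z))$ regardless of the sign of $\ell$, and setting $\ell_z:=k_n+\ell$ yields the full range $1\le\ell_z\le\lfloor 1/\alf_n\rfloor-\Bk+k_n-1$. Your decomposition $\ell_z=m+k_n$ with $m\ge 0$ only recovers the subrange $\ell_z\ge k_n$ and misses $\ell_z\in\{1,\dots,k_n-1\}$. The fix is to drop the constraint $m\ge 0$ and argue exactly as the paper does.
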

\begin{proof}
As $w\in \Dom f_{n+1}$, by the definition of renormalization $\rr(f_{n})=f_{n+1}$, there are 
$\zeta \in \Phi_{n}(S^0_{n})$ and $\zeta' \in \Phi_{n}( \C_{n}\cup\Csh_{n})$, 
such that
\begin{equation*} 
\ex(\zeta)=w,\quad \ex(\zeta')=f_{n+1}(w),\text { and} \quad \zeta'=\Phi_{n}\circ f_{n}\co{k_{n}}\circ\Phi_{n}^{-1}(\zeta).
\end{equation*}
Since $\ex (\Phi_{n}(z))=w$, there exists an integer $\ell$ 
with 
\[-k_{n}+1\leq\ell\leq \lfloor 1/\alf_{n}\rfloor-\Bk-1,\] 
such that $\Phi_{n}(z)+\ell=\zeta$.

By the Abel functional equation for $\Phi_{n}$, we have
\begin{align*}
\zeta' &=\Phi_{n}\circ f_{n}\co{k_{n}}\circ\Phi_{n}^{-1}(\zeta)\\
          &=\Phi_{n}\circ f_{n}\co{k_{n}}\circ\Phi_{n}^{-1}(\Phi_{n}(z)+\ell)\\
          &=\Phi_{n}\circ f_{n}\co{k_{n}+\ell}(z).
\end{align*}

Letting $\ell_z:=k_{n}+\ell$, we have 
\[1 \leq  \ell_z \leq k_{n}+ \lfloor 1/\alf_{n} \rfloor-\Bk-1, \quad f_{n}\co{\ell_z}(z)=\Phi_{n}^{-1}(\zeta') \in \p_{n}, \text{ and}\] 
\begin{align*} 
\ex\circ\Phi_{n}(f_{n}\co{\ell_z}(z))&=\ex \circ \Phi_{n}(\Phi_{n}^{-1}(\zeta'))\\
&=\ex (\zeta')\\
&=f_{n+1}(w).
\qedhere
\end{align*}
\end{proof}

It is clear that in the above lemma there are many choices for $\ell_z$. 
By making some specific choices for $\ell_z$ and inductively using the above lemma, the number of iterates on 
level $n$ are related to the number of iterates on level $0$

Define 
\[\p_n':=\{w\in\p_n\mid 0 <\Re \Phi_{n}(w) <\lfloor 1/\alf_n\rfloor-\Bk-1\}.\]
\begin{lem}\label{conjugacy}
For every $n\geq 1$ we have 
\begin{itemize}
 \setlength{\itemsep}{.2em}
\item[i.]for every $w\in \p_n'$, $f_0\co{q_n}\circ\Psi_n(w)=\Psi_n \circ f_n(w)$,
\item[ii.]for every $w\in S^0_n$, $f_0\co{(k_nq_n+q_{n-1})}\circ\Psi_n(w)=\Psi_n\circ f_n\co{k_n}(w)$.
\end{itemize}
\end{lem}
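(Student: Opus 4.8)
The plan is to prove (i) and (ii) together by induction on $n$, using Lemma~\ref{renorm} to pass between consecutive renormalization levels and the continued fraction recursion $q_n=\lfloor 1/\alf_{n-1}\rfloor\, q_{n-1}+q_{n-2}$ (with the usual conventions $q_0=1$, $q_{-1}=0$) to keep track of the number of iterates. It is convenient to allow $n=0$ as the base case: there $\Psi_0=\mathrm{id}$ and the two exponents are $q_0=1$ and $k_0q_0+q_{-1}=k_0$, so both identities are tautologies, and it suffices to carry out the inductive step $n-1\to n$ for $n\geq 1$.

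Fix $n\geq 1$ and assume (i) and (ii) hold with $n$ replaced by $n-1$. Given $w\in\p_n'$, set $v:=\psi_n(w)=\Phi_{n-1}^{-1}(\eta_n(w))\in\p_{n-1}$, so that $\ex\circ\Phi_{n-1}(v)=\ex(\eta_n(w))=w\in\p_n\subset\Dom f_n$ and $f_n(w)\in\p_n$. Lemma~\ref{renorm} applied at level $n-1$ to the point $v$ (with $w$ in the role of its ``$w$'') provides an integer $\ell$ with $1\leq\ell\leq\lfloor1/\alf_{n-1}\rfloor-\Bk+k_{n-1}-1$ such that the finite orbit $v,f_{n-1}(v),\dots,f_{n-1}\co{\ell}(v)$ is defined, $f_{n-1}\co{\ell}(v)\in\p_{n-1}$, and $\ex\circ\Phi_{n-1}(f_{n-1}\co{\ell}(v))=f_n(w)$. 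The point $\psi_n(f_n(w))=\Phi_{n-1}^{-1}(\eta_n(f_n(w)))$ also lies in $\p_{n-1}$ and has the same image $f_n(w)$ under $\ex\circ\Phi_{n-1}$; since $\Phi_{n-1}$ is univalent and the fibres of $\ex$ are orbits of the translation $\zeta\mapsto\zeta+1$, it follows that $\Phi_{n-1}(f_{n-1}\co{\ell}(v))-\Phi_{n-1}(\psi_n(f_n(w)))$ is an integer $m$. Using the location of $f_{n-1}\co{\ell}(v)$ coming out of the proof of Lemma~\ref{renorm} (its $\Phi_{n-1}$-value lies in $\Phi_{n-1}(\C_{n-1}\cup\Csh_{n-1})$, hence has real part in $[1/2,3/2]$) together with the normalization $\Re\,\eta_n\subset[0,\hat{\Bk}+1]$, one sees that $m\leq 0$ with $|m|$ bounded by a constant depending only on $\Bk$ and $\hat{\Bk}$, and that the translation by $-m$ in the coordinate $\Phi_{n-1}$ is realized by $|m|$ further iterates of $f_{n-1}$; consequently $\psi_n(f_n(w))=f_{n-1}\co{\ell-m}(v)$, and the entire orbit joining $v$ to $\psi_n(f_n(w))$ stays inside $\p_{n-1}$.

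It remains to split this $f_{n-1}$-orbit of length $\ell-m$ into blocks governed by the induction hypothesis. From the description in the proof of Lemma~\ref{renorm} the orbit passes through a point $u\in S_{n-1}^0$, and the $k_{n-1}$ steps $u\mapsto f_{n-1}\co{k_{n-1}}(u)$ form one ``excursion'' to which hypothesis~(ii) at level $n-1$ applies, contributing $k_{n-1}q_{n-1}+q_{n-2}$ iterates of $f_0$ after composing with $\Psi_{n-1}$; the remaining $\ell-m-k_{n-1}$ steps of the orbit are ``petal steps'' whose consecutive points lie in $\p_{n-1}'$, to each of which hypothesis~(i) at level $n-1$ applies, contributing $q_{n-1}$ iterates of $f_0$ each. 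Since $\Psi_n=\Psi_{n-1}\circ\psi_n$, adding these contributions gives $(\ell-m-k_{n-1})q_{n-1}+\big(k_{n-1}q_{n-1}+q_{n-2}\big)=(\ell-m)q_{n-1}+q_{n-2}$ iterates of $f_0$, and the bookkeeping of $m$ and of the number of petal steps is arranged precisely so that $\ell-m=\lfloor1/\alf_{n-1}\rfloor$; hence this number equals $\lfloor1/\alf_{n-1}\rfloor q_{n-1}+q_{n-2}=q_n$, which proves (i) for $n$. For (ii) one argues in the same fashion but follows the $k_n$ iterates $w\mapsto f_n\co{k_n}(w)$ starting from $w\in S_n^0$ (extending $\Phi_n$ via the dynamics of $f_n$ where the intermediate iterates leave $\p_n$, as in the Remark of Section~\ref{sec:tower}); the passage of this orbit through the critical value of $f_n$ forces one extra excursion compared with $k_n$ separate applications of~(i), and the same telescoping yields the exponent $k_nq_n+q_{n-1}$.

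The crux — and the only genuinely delicate point — is the exact iterate count: proving that $\ell-m$ equals $\lfloor1/\alf_{n-1}\rfloor$ in~(i) (and the analogous exact value in~(ii)), and that exactly one excursion occurs. This requires pinning down where $S_{n-1}^0$ sits inside $\p_{n-1}$ in the Fatou coordinate — since $k_{n-1}$ is the \emph{smallest} exponent for which $\C_{n-1}^{-k_{n-1}}\cup(\Csh_{n-1})^{-k_{n-1}}$ fits in $\{0<\Re\Phi_{n-1}<\lfloor1/\alf_{n-1}\rfloor-\Bk-1\}$, its real part is $\lfloor1/\alf_{n-1}\rfloor-\Bk$ up to an error bounded in terms of $\Bk$, but the fine behaviour near $\cp_{f_{n-1}}$ (where $f_{n-1}$ folds) has to be controlled — and checking that none of the orbits produced by Lemma~\ref{renorm} nor the auxiliary shifts by $m$ ever leave the domains on which the Abel functional equation and Lemma~\ref{renorm} are valid; this last point is where the standing assumption $N\geq\Bk+\hat{\Bk}+1$ and the uniform bound $k_h\leq\Bk''$ of Lemma~\ref{turning} enter.
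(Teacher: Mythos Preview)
Your approach is a direct inductive bookkeeping argument, and you yourself correctly identify its gap: you never actually prove that $\ell-m=\lfloor 1/\alf_{n-1}\rfloor$ in part~(i), nor that exactly one excursion through $S_{n-1}^0$ occurs. You assert that ``the bookkeeping of $m$ and of the number of petal steps is arranged precisely so that'' this holds, but no argument is given; the location of $S_{n-1}^0$ in the Fatou coordinate is only known up to bounded error (as you note), and this is not enough to pin down an \emph{exact} integer. The sketch for~(ii) is even more incomplete. So as written the proposal does not constitute a proof.

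The paper avoids this bookkeeping entirely by a much simpler device. Once Lemma~\ref{renorm} establishes that one iterate of $f_n$ on $\p_n$ corresponds to \emph{some} number of iterates of $f_0$ via $\Psi_n$, that number is locally constant in $w$ (both sides are holomorphic), so it suffices to determine it at a single point. For $w$ close to $0$ the maps $f_0,\dots,f_n$ are all close to the corresponding rigid rotations, and the renormalization of a rotation by $\alf$ is the rotation by $1/\alf\pmod 1$; the return time to a fundamental sector is then exactly $q_n$ by the elementary arithmetic of continued fractions (and $k_n q_n+q_{n-1}$ for the branched piece, for the same reason). Analytic continuation then propagates the identity to all of $\p_n'$ (resp.\ $S_n^0$). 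This sidesteps all the delicate positioning of $S_{n-1}^0$, the control of $m$, and the excursion count that your argument would need; the price is that one must accept the comparison-to-rotation step, which the paper does not spell out but refers to \cite{Ch10-I} for.
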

\begin{proof}
By the previous lemma, an iterate of $f_n$ on $\p_n$ corresponds to some iterate of $f_0$ on $\Psi_n(\p_n)$. 
The correct numbers of iterates, $q_n$ and $k_nq_n+q_{n-1}$, when $|w|$ is small, are determined by 
comparing the map to the rotation of angle $\alf$. 
Hence, the equalities hold at points near $0$ and therefore, must hold over the domain of definitions by the 
analytic continuation.  
A more detailed proof may be found in \cite{Ch10-I}.   
\end{proof}
\subsection{The approximating neighborhoods}
For every $n\geq 0$, consider the union
\begin{equation}\label{union-0}
\Omega^0_n:=\bigcup_{i=0}^{k_n+\lfloor 1/\alf_n\rfloor-\Bk-1}f_n\co{i}(S^0_n)\subset \Dom f_n.
\end{equation}
Using Lemma~\ref{conjugacy}, we transfer the iterates in the above union to the dynamic plane of $f_0$ to form 
\[\Omega^n_0:=\bigcup_{i=0}^{q_n(k_n+\lfloor 1/\alf_n\rfloor-\Bk-1)+q_{n-1}} f_0\co{i}(S^n_0)\bigcup \{0\}.\]

The upper bound in the above union is obtained as follows. 
The first $k_n$ iterates in \eqref{union-0} correspond to $k_nq_n+q_{n-1}$ number of iterates on level $0$ by 
Lemma~\ref{conjugacy}-ii, and the remaining $\lfloor 1/\alf_n\rfloor-\Bk-1$ iterates in \eqref{union-0} 
amounts to $q_n(\lfloor 1/\alf_n\rfloor-\Bk-1)$ number of iterates by Lemma~\ref{conjugacy}-i. 
With these choices of the number of iterates,  the unions satisfy the following property.
\begin{propo}\label{neighbor}
For every $n\geq0$, 
\begin{itemize}
\item[i.] $\Omega_{0}^{n+1}$ is compactly contained in the interior of $\Omega_0^n$, 
\item[ii.] $\pc(f_0)$ is contained in the interior of $\Omega^n_0$. 
\end{itemize}
\end{propo}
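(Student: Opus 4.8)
The plan is to prove the two statements of Proposition~\ref{neighbor} in the order they are listed, deriving (ii) from (i) together with the recurrence of the critical orbit. The core of the argument is a \emph{combinatorial bookkeeping} claim: the finite collection of iterates $f_0\co{i}(S^{n+1}_0)$, $0\le i\le q_{n+1}(k_{n+1}+\lfloor 1/\alf_{n+1}\rfloor-\Bk-1)+q_n$, that make up $\Omega^{n+1}_0$ is, via the conjugacies of Lemma~\ref{conjugacy}, a \emph{subfamily} of the iterates $f_0\co{i}(S^n_0)$ that make up $\Omega^n_0$. More precisely: first I would work one level up, on the dynamic plane of $f_n$, and show that
\[
\Omega^0_{n}\supset \bigcup_{i=0}^{q_1'(k_{n+1}+\lfloor 1/\alf_{n+1}\rfloor-\Bk-1)+q_0'} f_n\co{i}(\psi_{n+1}(S^0_{n+1})),
\]
where $q_1'=\lfloor 1/\alf_n\rfloor=a_{n+1}$ and $q_0'=1$ are the level-$n$ analogues of $q_{n+1},q_n$; i.e.\ it suffices to prove the inclusion $\Omega^1_{n}\subset\subset\daron(\Omega^0_n)$ for a single renormalization step, and then push the whole picture down to level $0$ by applying $\Psi_n$ and invoking Lemma~\ref{conjugacy} to translate ``$q_1'k_{n+1}+q_0'$ iterates of $f_n$'' into ``$q_n(a_{n+1}k_{n+1}\cdots)+\cdots$ iterates of $f_0$'' — the arithmetic of the continued fraction (with $q_{n+1}=a_{n+1}q_n+q_{n-1}$) is exactly what makes the two upper bounds match.

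For the one-step inclusion $\Omega^1_n\subset\subset\daron(\Omega^0_n)$, the key geometric input is Lemma~\ref{L:extra-space}: the sector $S^0_{n+1}$ lives in $\ex^{-1}$ of $\C^\sharp_{n+1}\cup\C_{n+1}$, hence $\psi_{n+1}(S^0_{n+1})=\Phi_n^{-1}(\eta_{n+1}(S^0_{n+1}))$ lands inside a set of the form $\ex(B(j,\delta_1))$ with the extra room $\delta_1$, and its forward iterates under $f_n$ (there are at most $k_n+\lfloor 1/\alf_n\rfloor-\Bk-1$ of them that matter, by Lemma~\ref{renorm}) are each contained in $\Dom f_n$ \emph{with a definite neighborhood}, by the second bullet of Lemma~\ref{L:extra-space}. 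I would then show each such iterate of $\psi_{n+1}(S^0_{n+1})$ sits compactly inside the corresponding piece $f_n\co{i}(S^0_n)$ of $\Omega^0_n$ — tracking the index $i$ via the Abel equation $\Phi_n\circ f_n = \Phi_n+1$ and the strip description of $\p_n$ in Theorem~\ref{Ino-Shi1}(iii): a point of $\psi_{n+1}(S^0_{n+1})$ has $\Phi_n$-image with real part in $[0,\hat{\Bk}+1]$, so after enough iterates of $f_n$ its real part enters $[1/2,3/2]$ and it lies in (a compact subset of) $\C_n\cup\C^\sharp_n$, whence all subsequent iterates lie in the sets $f_n\co{i}(S^0_n)$. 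Combining the $\delta_1$-thickening with the compactness of the Inou–Shishikura class and the continuous dependence of the $\Phi_n$'s on $f_n$ gives the \emph{compact} containment uniformly. Pushing down by $\Psi_n$, which is a fixed conformal map on $\p_0$, preserves compact containment, yielding (i).

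Part (ii) is then short. By Ma\~n\'e's theorem the critical point of $f_0=P_\alf$ is recurrent and its orbit accumulates on $\pc(f_0)$; by construction $\cv_{f_0}\in f_0\co{k_0}(S^0_0)\subset\Omega^0_0$, and more to the point the definition of the renormalization tower is arranged so that the \emph{entire} forward orbit of the critical value, truncated at step $q_n(k_n+\lfloor1/\alf_n\rfloor-\Bk-1)+q_{n-1}$, lies in $\Omega^n_0$; letting $n$ grow and using that $\Omega^n_0$ is (closed up by $\{0\}$ and) forward-invariant enough near $0$, one gets $\{P_\alf\co{j}(\cv)\}_{j\ge1}\subset\Omega^n_0$ for every $n$. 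Since $\Omega^n_0$ is, by (i), contained in $\daron(\Omega^{n-1}_0)$ hence relatively open neighborhoods shrinking down, its interior contains the closure of that orbit, i.e.\ $\pc(f_0)\subset\daron(\Omega^n_0)$. The \textbf{main obstacle} is the index bookkeeping in the one-step inclusion: one must verify that \emph{no} iterate $f_0\co{i}(S^{n+1}_0)$ escapes the union defining $\Omega^n_0$ — neither by the orbit wandering outside $\Dom f_n$ (controlled by Lemma~\ref{L:extra-space}, the genuinely quantitative ingredient) nor by the iterate-count on level $0$ overshooting $q_n(k_n+\lfloor1/\alf_n\rfloor-\Bk-1)+q_{n-1}$, which is precisely where the continued-fraction recursion $q_{n+1}=a_{n+1}q_n+q_{n-1}$ and the bound $\ell_z\le \lfloor1/\alf_n\rfloor-\Bk+k_n-1$ of Lemma~\ref{renorm} must be combined with care.
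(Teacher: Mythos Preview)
Your overall strategy matches the paper's: reduce to a one-step inclusion on level $n$, use Lemma~\ref{renorm} to trace the orbit of a point in $S^{n+1}_0$ through checkpoints in $S^0_n$ with gaps of length at most $k_n+a_{n+1}-\Bk-1$, and push down by Lemma~\ref{conjugacy}; the continued-fraction identity $q_{n+1}=a_{n+1}q_n+q_{n-1}$ is exactly what makes the iterate counts match. For (ii) the paper also derives the statement from the nesting in (i), though it locates the critical point in $\Omega^n_0$ directly --- via the fact that $f_0\co{k_nq_n+q_{n-1}}:S^n_0\to\Psi_n(f_n\co{k_n}(S^0_n))$ is a degree-two branched cover --- rather than invoking Ma\~n\'e.

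The gap is in your compact-containment step. Lemma~\ref{L:extra-space} gives $\delta_1$-room only inside $\Dom f_{n+1}$, not inside any particular piece $f_n\co{i}(S^0_n)$ of $\Omega^0_n$; and your claim that iterates of $\psi_{n+1}(S^0_{n+1})$ land in ``a compact subset of $\C_n\cup\C^\sharp_n$'' cannot hold as stated, since $S^0_{n+1}$ has $0$ on its boundary, $\psi_{n+1}(0)=0$, and $f_n(0)=0$, so these iterated sets all accumulate on $0\in\partial\C^\sharp_n$. The paper's device for compactness is much simpler and bypasses Lemma~\ref{L:extra-space} entirely: for $z'\in\overline{\Omega^{n+1}_0}$, pull back to $z\in\overline{S^{n+1}_0}$ along $f_0$ (possible since $f_0$ is entire), rerun the Step~1 bookkeeping for this $z$, and observe that the intermediate checkpoints $\sigma_j$ may be chosen in the \emph{interior} of $S^0_n$ (they arise from Lemma~\ref{renorm} applied to points of the open set $\Dom f_{n+1}$). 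Then $z'=f_0\co{t}(\Psi_n(\sigma_j))$ with $\Psi_n(\sigma_j)\in\daron(S^n_0)$ and $t$ within the allowed range, and since $f_0$ is an open map this places $z'\in\daron(\Omega^n_0)$.
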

\begin{proof}The proof is broken into several steps.
\medskip

{\em Step 1.} $\forall n\geq 0$,  $\Omega^{n+1}_0 \subset \Omega^n_0$.

To prove this, it is enough to show the following claim. 
For every $z \in S^{n+1}_0$ there are points $z_1,z_2,\dots, z_{m}$ in 
$S^n_0$ as well as non-negative integers $t_1,t_2,\dots,t_{m+1}$, for some positive integer $m$, 
fulfilling the following properties:
\begin{itemize}
\item $f_0\co{t_1}(z_1)=z$  and $f_0\co{t_{m+1}}(z_m)=f_0\co{(q_{n+2}+(k_{n+1}-\Bk-1)q_{n+1})}(z)$,
\item $f_0\co{t_j}(z_{j-1})=z_j$, for $j=2,3,\dots,t_{m-1}$, 
\item $t_j\leq q_{n+1}+(k_n-\Bk-1)q_n$, for every $j=1,2,\dots,m+1$. 
\end{itemize} 

Given $z\in S^{n+1}_0$, let $\zeta:=\Psi_{n+1}^{-1}(z)\in S^0_{n+1}$. 
By the definition of $S^0_{n+1}$, the iterates 
\[\zeta, f_{n+1}(\zeta),f_{n+1}\co{2}(\zeta),\dots, f_{n+1}\co{(k_{n+1}+\lfloor \frac{1}{\alf_{n+1}}\rfloor-\Bk-1)}(\zeta)\] 
are defined and belong to the domain of $f_{n+1}$. 

By Lemma~\ref{renorm} for $\psi_{n+1}(\zeta)$, there are two points $\xi_1:=\psi_{n+1}(\zeta)$ 
and $\xi_2$ in $\p_n$ as well as a positive integer $\ell_0$ with $\ex\circ \Phi_n(\xi_1)=\zeta$, 
$\ex\circ \Phi_n(\xi_2)=f_{n+1}(\zeta)$ and $f_n\co{\ell_0}(\xi_1)=\xi_2$. 
Let $\sigma_1\in S_n^0$ and an integer $\ell_1$ with $1\leq\ell_1\leq k_n+a_{n+1}-\Bk-1$ be such that 
$f_n\co{\ell_1}(\sigma_1)=\xi_1$. 
By the same lemma, there is a point $\sigma_2$ in the orbit 
\[\xi_1, f_n(\xi_1), f_n\co{2}(\xi_1),\dots,f_n\co{(\ell_0-1)}(\xi_1),\xi_2\]
which belongs to $S_n^0$. 
Let $\ell_2$ be the positive integer with $1 \leq\ell_2\leq k_n+\lfloor \frac{1}{\alf_n}\rfloor -\Bk-1$ such that 
$f_n\co{\ell_2}(\sigma_1)=\sigma_2$.

By the same argument for $\xi_2$ with $\ex\circ \Phi_n (\xi_2)=f_{n+1}(\zeta)$, we obtain points 
$\sigma_3$ in $S_n^0$, $\xi_3$ in $\p_n$ and a positive integer $\ell_3$ with $1 \leq\ell_3\leq k_n+a_{n+1}-\Bk-1$, 
such that $f_n\co{\ell_3}(\sigma_2)=\sigma_3$. 

Repeating this argument with $\xi_3,\xi_4,\dots,\xi_m$, 
where $m=k_{n+1}+\lfloor \frac{1}{\alf_{n+1}}\rfloor-\Bk-1$, one obtains a sequence 
$\sigma_1, \sigma_2, \dots, \sigma_m$
of points in $S_n^0$ and positive integers 
$\ell_1,\ell_2,\dots,\ell_{m+1}$,
all bounded by $k_n+\lfloor \frac{1}{\alf_n}\rfloor-\Bk-1$, which satisfy
\begin{itemize}
\item $f_n\co{\ell_{j+1}}(\sigma_j)=\sigma_{j+1}$ for all $j=2,3, \dots,m-1$
\item $f_n\co{\ell_1}(\sigma_1)=\xi_1$ and $f_n\co{\ell_{m+1}}(\sigma_m)=\xi_m$.  
\end{itemize}

Now we define $z_i:=\Psi_n(\sigma_i)\in S_0^n$, for $i=1,2,\dots,m$. 
By definition, $\Psi_n(\xi_1)=z$. 
We claim that $\Psi_n(\xi_m)=f_0\co{(q_{n+2}+(k_{n+1}-\Bk-1)q_{n+1})}(z)$. 
This is because 
\[\ex\circ \Phi_n(\xi_m)=f_{n+1}\co{(k_{n+1}+\lfloor \frac{1}{\alf_{n+1}}\rfloor-\Bk-1)}(\zeta),\] 
which is mapped to $f_0\co{(q_{n+2}+(k_{n+1}-\Bk-1)q_{n+1})}(z)$ under $\Psi_{n+1}$, 
using Lemma~\ref{conjugacy}.  

By Lemmas~\ref{conjugacy}, $\ell_j$ times iterating $f_n$ corresponds to $t_j$ times iterating $f_0$, for each 
$j=1,2,\dots,\ell_{m+1}$. 
With the same lemma, as $\ell_j$ is bounded by $k_n+\lfloor \frac{1}{\alf_n}\rfloor-\Bk-1$, 
each $t_j$ is bounded by 
\[k_nq_n+q_{n-1}+(\lfloor \frac{1}{\alf_n}\rfloor-\Bk-1)q_n\leq q_{n+1}+q_n(k_n-\Bk-1).\] 

{\em Step 2.} $\forall n\geq 0$, $\Omega^{n+1}_0$ is compactly contained  in the interior of $\Omega^n_0$.

To prove this we use the open mapping property of holomorphic and anti-holomorphic mappings. 
If $z'$ is a point in the closure of $\Omega^{n+1}_0$, as $f_0$ is a polynomial defined on the whole complex 
plane, there exists a point $z$ in the closure of $S^{n+1}_0$ with $f_0\co{t_0}(z)=z'$ , for a non-negative integer 
$t_0$ less than $q_{n+2}+(k_n-\Bk-1)q_{n+1}$. 
By the proof of Lemma~\ref{renorm}, all the points $\sigma_j$ in the above argument can be chosen to be
in the interior of $S_n^0$. 
Hence, all $z_i$ are contained in the interior of $S_0^n$.     
\medskip

{\em Step 3.} $\forall n\geq 0$, the critical point of $f_0$ belongs to $\Omega^n_0$ and 
can be iterated at least $(a_{n+1}-\Bk-1)q_n$ times within this set. 

The map $f_n\colon S_n^0\ra f_n\co{k_n}(S_n^0)$ is a degree two branched covering. 
Thus, by Lemma~\ref{conjugacy}, $f_0\co{(k_nq_n+q_{n+1})}\colon  S_0^n\ra \Psi_n(f_n\co{k_n}(S_n^0))$ is 
also a degree two map. 
That means that the critical point of $f_0$ is contained in the union 
$\cup^{k_nq_n+q_{n-1}}_{i=0}  f_0\co{i}(S_0^n)$. 
Therefore, by the definition of $\Omega^n_0$, the critical point can be iterated at least 
\[q_{n+1}+(k_n-\Bk-1)q_n-k_nq_n-q_{n-1}=(a_{n+1}-\Bk-1)q_n\] 
times within $\Omega^n_0$.  
 
As $a_{n+1}-\Bk-1\geq1$ and $q_n$ grows (exponentially) to infinity as $n$ goes to infinity, 
the critical point of $f_0$ can be iterated infinite number of times within each $\Omega^n_0$.  

Finally, for every $n\geq0$, $\Omega^n_0$ contains the closure of $\Omega^{n+1}_0$ in its interior, and 
$\Omega^{n+1}_0$ contains the orbit of the critical point. 
Therefore, the post-critical set must be contained in every $\Omega^n_0$.
\end{proof}
\section{Area of the post-critical set}\label{sec:measure}
In this section we prove the following proposition which combined with Proposition~\ref{neighbor} 
implies Theorem~\ref{pc-area}.
Let $N$ be the constant introduced at the end of Section~1.
\begin{propo}\label{intersection-area}
For all Brjuno $\alf$ in $\irr$,  $(\cap_{n=0}^\infty \Omega^n_0)\cap J(P_\alf)$ has zero area.
\end{propo}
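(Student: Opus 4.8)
The plan is to use the nested structure of the neighborhoods $\Omega^n_0$ to reduce the proposition to a uniform area-contraction estimate between two consecutive renormalization levels, and then to prove that estimate by transporting everything to level $n$ through the change of coordinates $\Psi_n$, exploiting the Inou--Shishikura compact class together with the optimal estimate on the derivative of the Fatou coordinate from Section~\ref{S:perturbed-coordinate}. First, by Proposition~\ref{neighbor}-i the sets $\Omega^n_0$ decrease in $n$ and $\Omega^0_0$ is bounded, so by monotone convergence
\[\area\Big(\big(\textstyle\bigcap_{n\ge 0}\Omega^n_0\big)\cap J(P_\alf)\Big)=\lim_{n\to\infty}\area\big(\Omega^n_0\cap J(P_\alf)\big),\]
and it is enough to exhibit a constant $\kappa<1$, independent of $n$ and of $\alf\in\irr$, with $\area\big(\Omega^{n+1}_0\cap J(P_\alf)\big)\le\kappa\,\area\big(\Omega^n_0\cap J(P_\alf)\big)$ for all $n$, since iterating this forces the limit to vanish.

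To obtain this one-level estimate, fix $n$ and recall from Step~1 of the proof of Proposition~\ref{neighbor} that $\Omega^{n+1}_0\subseteq\Omega^n_0$, and, more precisely, that pulling $\Omega^{n+1}_0$ and $\Omega^n_0$ back to level $n$ through the relevant branches of $\Psi_n$ composed with the appropriate iterates of $f_0$ --- the iterate counts being governed by Lemmas~\ref{renorm} and \ref{conjugacy} --- turns them, up to bounded multiplicity factors and bounded distortion, into $\psi_{n+1}(\Omega^0_{n+1})\cap J(f_n)$ and $\Omega^0_n\cap J(f_n)$, where $\Omega^0_n$ is the level-$n$ neighborhood of \eqref{union-0} and $\psi_{n+1}(\Omega^0_{n+1})$ is the level-$(n{+}1)$ neighborhood transported into $\p_n$. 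Thus the sought inequality reduces to the level-$n$ statement
\[\area\big(\psi_{n+1}(\Omega^0_{n+1})\cap J(f_n)\big)\le\kappa\cdot\area\big(\Omega^0_n\cap J(f_n)\big)\]
with a uniform $\kappa<1$.

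The passage to level $n$ above requires the branches of $\Psi_n=\psi_1\circ\cdots\circ\psi_n$ used there to be univalent with distortion bounded independently of $n$ and $\alf$, and it is precisely this that uses Section~\ref{S:perturbed-coordinate} and the Brjuno hypothesis. Estimating the non-conformality accumulated along the composition $\Psi_n$ and rescaling the error produced at level $k$ through the subsequent coordinate changes --- using that the $k$-th Fatou domain of Theorem~\ref{Ino-Shi1} has width $\lfloor 1/\alf_k\rfloor$ and that the new estimate furnishes an exponentially decaying error --- one finds, thanks to the sharpness of that estimate, that the $k$-th level contributes a quantity whose sum over $k$ is comparable to the Brjuno series $\sum_k \tfrac{\log q_{k+1}}{q_k}$; hence for Brjuno $\alf$ the total distortion is uniformly bounded. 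For non-Brjuno $\alf$ this series diverges and the argument breaks down, which is why that case is handled separately in \cite{Ch10-I} by a direct porosity argument, and why a merely global estimate (as in \cite{Sh98,Sh00,IS06}, whose errors are not summable along the tower) does not suffice here.

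Finally, the level-$n$ inequality is established entirely within the Inou--Shishikura compact family: since $f_n\in\ff_{\alf_n}$ (Theorem~\ref{Ino-Shi2}), $k_n\le\Bk''$ (Lemma~\ref{turning}), and all the relevant data depend continuously on $f_n$, the set $\psi_{n+1}(\Omega^0_{n+1})$ accumulates only at $0\in\partial\p_n$ and is confined to a proper sub-region of $\Omega^0_n$ that stays a definite distance (in the $\Phi_n$-coordinate, a definite amount) away from the part of $\Omega^0_n$ containing $\cv_{f_n}$; a compactness-plus-density argument then shows that $J(f_n)$ is not concentrated in that sub-region, giving the required uniform $\kappa<1$. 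I expect the two delicate points to be, first, the uniform distortion control of the deep compositions $\Psi_n$ described in the previous paragraph, which is exactly what the optimal exponentially decaying estimate of Section~\ref{S:perturbed-coordinate} is designed to provide and where the sharp Brjuno threshold is used; and second, the more combinatorial task of describing precisely the sub-region occupied by $\psi_{n+1}(\Omega^0_{n+1})$ inside $\Omega^0_n$ and proving the ``$J(f_n)$ is not concentrated there'' estimate uniformly over the compact class.
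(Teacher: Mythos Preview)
Your approach is quite different from the paper's, and unfortunately it contains a genuine gap rather than an alternative valid route.

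The paper does \emph{not} prove a uniform one-step contraction $\area(\Omega^{n+1}_0\cap J)\le\kappa\,\area(\Omega^n_0\cap J)$ with $\kappa<1$ independent of $n$. Instead it associates to each point $z_0\in\bigcap_n\Omega^n_0\cap J(P_\alf)$ a sequence $(z_i,\zeta_i)$ going down the tower, and splits the intersection into two sets according to the behaviour of $\Im\zeta_m$: a set $\Gamma$ where eventually $\Im\zeta_m\ge D/\alf_m$, and a set $\Lambda$ where $\Im\zeta_m<D/\alf_m$ infinitely often. The set $\Lambda$ is shown to be non-uniformly porous (Proposition~\ref{prop:non-uniformly-porous}), while for $\Gamma$ one projects to sets $B_j$ on a cylinder and proves (Lemma~\ref{contracting-area}) that $\area B_{j-1}<\tfrac12\area B_j$ \emph{only} at those levels where $\alf_j$ is small; there is no claim of contraction at every level. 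The Brjuno hypothesis enters through Yoccoz's lower bound on the Siegel radius, which is used in Lemma~\ref{bounded-height} to show the heights $b_j$ stay bounded.

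Your proposed argument has several concrete problems. First, the transport of $\Omega^n_0$ and $\Omega^{n+1}_0$ to level~$n$ is not simply $\Psi_n^{-1}$: by definition $\Omega^n_0$ is a union of many $f_0$-iterates of $\Psi_n(S^0_n)$, and these pieces overlap and have widely varying Jacobians, so ``bounded multiplicity and bounded distortion'' does not follow from the inclusions in Proposition~\ref{neighbor}. Second, your use of the Brjuno sum to bound ``non-conformality accumulated along $\Psi_n$'' is confused: each $\psi_k=\Phi_{k-1}^{-1}\circ\eta_k$ is holomorphic (or anti-holomorphic), hence $\Psi_n$ is conformal and there is no non-conformality to accumulate; the estimate of Section~\ref{S:perturbed-coordinate} controls how close $\chi_h'$ is to the constant $\alf$, not a Beltrami coefficient. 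Third, and most importantly, the final ``compactness-plus-density'' step asserting that $J(f_n)$ is not concentrated in the sub-region $\psi_{n+1}(\Omega^0_{n+1})$ with a uniform gap is exactly the heart of the matter and your sketch gives no mechanism for it. Since the boundary of the Siegel disk of $f_n$ (which lies in $J(f_n)$) is contained in \emph{every} $\psi_{n+1}\circ\cdots\circ\psi_{n+m}(\Omega^0_{n+m})$, any argument must explain why $J(f_n)\cap\Omega^0_n$ cannot have almost all of its area arbitrarily close to that boundary; a bare compactness argument over $\ff$ does not do this. In short, the uniform $\kappa<1$ you posit is not established, and the paper's two-set decomposition $\Gamma\cup\Lambda$ is precisely what replaces it.
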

\subsection{Natural extensions}\label{SS:extensions}
For every $n\geq 1$, let $\textrm{Fil}(\Omega^0_n)$ denote the set obtained from adding the bounded components of 
$\cc\setminus \Omega_n^0$ to $\Omega_n^0$, if there is any. 
For all integer $n\geq 1$ and $r\in [0, 1/\alf_n-\Bk]$, let $I_{n,r}$ denote the connected 
component of 
\[\textrm{Fil}(\Omega_n^0) \cap \Phi_n^{-1}\{r+t\Bi : t\in \mathbb{R}\},\]
containing zero on its boundary. 
Each $I_{n,r}$ is a smooth curve in $\textrm{Fil}(\Omega^0_n)$ connecting the boundary of $\Omega^0_n$ to $0$. 
This implies that there is a continuous inverse branch of $\ex$ defined on every 
$\Omega^0_n\setminus I_{n,r}$.

By Theorem~\ref{Ino-Shi1}-ii, Lemma~\ref{turning},  and  the precompactness of the class $\fff$, there exists an 
integer $\Bk'$ such that 
\begin{equation}\label{E:k'-spiral}
\forall n\geq 1 \text{ and } \forall r\in [0,1/\alf_n-\Bk], \sup_{z,w\in \Omega^0_n \setminus I_{n,r}} 
|\arg(w)-\arg(z)| \leq 2\pi \Bk',
\end{equation}
for any continuous branch of argument defined on $\Omega^0_n \setminus I_{n,r}$.  

\begin{rem}
To avoid un-necessary technical details in the manuscript, from now on we assume that  
\begin{equation}\label{E:k'-condition}
N\geq 2\Bk'+\Bk+1.
\end{equation}
This guarantees that on every level $n$, $1/\alpha_n-\Bk\geq 2\Bk'+1$ and hence one can embed a 
lift $\ex^{-1}(\Omega_n^0\setminus I_{n, r})$ into some $\Omega_{n-1}\setminus I_{n-1, r'}$,  
via the Fatou coordinates. 
\end{rem}

The Fatou coordinate $\Phi_h$, and its inverse, have natural extensions onto some larger domains.
In the forward direction it is achieved as follows.
First define the set $\p_h^l$ as follows:
\begin{itemize}
\item if $\Bk'\leq k_h$
\[\p_h^l:= \bigcup_{j=0}^{2\Bk'} h\co{(k_h-\Bk'+j)}(S_h),\]
\item if  $\Bk'>k_h$
\[\p_h^l:=\bigcup_{j=0}^{\Bk'} h\co{(k_h+j)}(S_h) \bigcup \bigcup_{j=1}^{\Bk'-k_h}\Phi_h^{-1}(\Phi_h(S_h)-j).\]
\end{itemize}

Then $\Phi_h^l: \p_h^l\to \cc$ is defined as 
\begin{equation}\label{E:left-extension}
 \Phi_h^l(z):= \Phi_h(h\co{j}(z))-j,
\end{equation}
where $j$ is the smallest non-negative integer with $h\co{j}(z)\in \p_h$. 
The extended map is not univalent anymore, and has critical points at preimages of the critical point of $h$. 
These critical points of the extended map are sent to the points $0,-1,-2,\dots, -\Bk'+1$.

The inverse map, which is denoted by $\phi_h^\dagger$, is defined on the set
\begin{equation}\label{E:W}
\mathcal{W}_h:=\{w\in \cc\mid \Re w \in (\Bk', \alpha(h)^{-1}-\Bk)\}\bigcup \cup_{j=0}^{k_h+\Bk'}\Phi_h(S_h)+j.
\end{equation}
If $\Re w\in (\Bk', 1/\alpha-\Bk)$, $\Phi_h^\dagger(w):=\Phi_h^{-1}(w)$, and when $w\in \Phi_h(S_h)+j$
\begin{equation}\label{E:right-extension}
\Phi_h^\dagger(w):=h\co{j}\circ \Phi_h^{-1}(z-j)
\end{equation}
By the equivariance property of the map $\Phi_h$, one can see that this provides us with a well-defined map.
The map $\Phi_h^\dagger$ has critical points that are mapped to 
\[\cv_h, h(\cv_h), \dots ,h\co{\Bk_h-1}(\cv_h).\]
As usual, let $\Phi_n^\dagger:=\Phi_{f_n}^\dagger$ and $\Phi_n^l:=\Phi_{f_n}^l$, for $n=0,1,2,\dots$.
\subsection{The sequence $\zeta_n$}\label{S:heights}
To each non-zero $z_0\in \cap^{\infty}_{n=0}\Omega^n_0\cap J(P_\alf)$, we associate a sequence of pairs 
\begin{align}\label{E:pairs}
(z_i,\zeta_i), i=0,1,2, \dots.
\end{align}
(The sequence $\{z_i\}$ is the same as the first coordinates of the sequence of quadruples 
considered in~\cite{Ch10-I}). 
For $n\geq 0$, define the sets
\begin{gather*}\label{E:Three-sets}
\mathscr{A}_n:=\{z\in \p_n\mid \Phi_n(z)\in \cup_{j=1}^{\Bk'} B(j,\delta_1), \text{ or } 
\Re \Phi_n(z)\in (\Bk'+1/2, 1/\alpha_n-\Bk)\}, \\
\mathscr{B}_n:=\Omega_n^0\setminus \mathscr{A}_n,
\end{gather*}
where $\delta_1$ is obtained in Lemma~\ref{L:extra-space}. 
See Figure~\ref{GoDown}.

Sequence \eqref{E:pairs} is inductively defined as follows. 
To define $\zeta_0$, 
\begin{itemize}
\item if $z\in \mathscr{A}_0$, let $\zeta_0:= \Phi_h(z)$, 
\item if $z\in \mathscr{B}_0$, let $\zeta_0$ be a preimage of $z$ under $\Phi_h^\dagger$. 
\end{itemize}
\begin{figure}
\begin{center}
\begin{pspicture}(0,1.6)(10,7)
\epsfxsize=6cm
\rput(3,4){\epsfbox{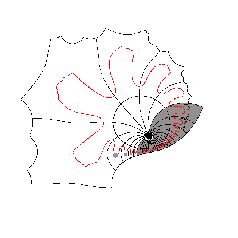}}
\psdot[dotsize=.9pt](1.8,5)
\rput(1.8,5.2){$z_0$}
\psdot[dotsize=.9pt](4.6,4.2)
\rput(4.8,4.2){$w_0$}

\psline[linearc=.04, linewidth=.3pt]{->}(4.6,4.3)(4.4,4.6)
\psline[linearc=.04, linewidth=.3pt]{->}(4.35,4.65)(4.,5.15)
\psline[linearc=.04, linewidth=.3pt]{->}(3.9,5.2)(3.2,5.4)
\psline[linearc=.04, linewidth=.3pt]{->}(3.1,5.4)(1.9,5)
\psline[linewidth=.7pt]{->}(5.,3.8)(6.5,3.8)
\rput(6,4.1){$\phi_0$}
\pspolygon[fillstyle=solid,fillcolor=LLLgray,linecolor=LLLgray](8.25,3.2)(9.6,3.2)(9.6,5.3)(8.25,5.3)
\psline[linewidth=.7pt]{->}(6.6,3.4)(9.8,3.4)
\psline[linewidth=.7pt]{->}(7.2,3.)(7.2,5.5)
\rput(8,5){$\zeta_0$}
\psdot[dotsize=1pt](8,4.7)
\psline[linewidth=.7pt]{->}(7,3.2)(5.5,1.7)
\rput(5.7,2.7){$e^{2\pi i w}$}
\psset{linecolor=LLLgray}
\qdisk(7.35,3.4){2.5pt}
\qdisk(7.6,3.4){2.5pt}
\qdisk(7.85,3.4){2.5pt}
\qdisk(8.1,3.4){2.5pt}
\rput(9.6,2.9){{\small $\frac{1}{\alf_0}-\Bk$}}
\end{pspicture}
\label{GoDown}
\caption{The two different colors correspond to the two different ways of going down the renormalization tower.
The gray part corresponds to $\mathscr{A}$ and the rest to $\mathscr{B}$.}
\end{center}
\end{figure}
The point $\zeta_0$ satisfying this property is not necessarily unique. 
However, one can choose any of them. 
Indeed, when $|z_0|$ is small enough, $\zeta_0$ is uniquely determined.
Otherwise, there are at most two choices for it. 

Now, define $z_1:=\ex(\zeta_0)$, and note that since $z_0 \in \Omega^1_0$, $z_1 \in \Omega^0_1$. 
Thus, we can repeat the above step to define the next pair $(z_1, \zeta_1)$, and so on. 
In general, for $j\geq 0$,
\begin{equation}\label{sign-property}
\begin{gathered} 
z_{j+1}=\ex(\zeta_{j}), \text{ and  } \\
\Phi_j^l(z_j):=\zeta_j \text{ or } \Phi_j^\dagger (\zeta_j)=z_j.
\end{gathered}
\end{equation}
Note that by the definition of this sequence, 
\begin{equation}\label{E:place-of-zeta}
\forall l\geq 0, \Bk'+1/2 \leq \Re \zeta_l \leq \alf_l^{-1}-\Bk+k_l+\Bk' \;\text{ or }\; 
\zeta_l\in \cup_{j=1}^{\Bk'} B(j,\delta_1).    
\end{equation}
 
By 1/4-theorem, every map $h\in \fff$ is univalent on $B(0,1/12)$.  
Let 
\[D:=1+\frac{1}{2\pi} \log \frac{16}{9},\]
and note that 
\begin{equation}\label{constant-D2}
 D\geq \delta_1 \text{ and } \ex (D \Bi)\leq 1/12.
\end{equation}
Since $\rr h$ also belongs to $\fff$, by the definition of renormalization, $h^{k_h}$ is univalent on 
$\{z\in S_h|\Im \Phi_h(z)\geq D\}$.  
With this choice of $D$, we decompose the intersection into two sets  
\begin{gather*} 
\Gamma:=  \{z \in \cap_{n=0}^\infty \Omega^n_0\cap J(P_\alf) \mid \exists N \text{ such that for all $m\geq N$,}
 \Im \zeta_m \geq D/\alf_m \},\\
 \Lambda:= \{z \in \cap_{n=0}^\infty \Omega^n_0\cap J(P_\alf) \mid\text{there are infinitely many $m$ with } 
 \Im \zeta_m < D/\alf_m \}.
\end{gather*} 
The areas of these two sets are treated in the following two subsections.
\subsection{Area of $\Gamma$}\label{SS:non-porous-part}
The set $\Gamma$ is contained in the union of the sets 
\[\Gamma^m:= \{z\in \Gamma\mid \text{ for all } n\geq m,\, \Im\zeta_n\geq D/\alf_n\}, 
\text{ for } m=0,1,2,\dots.\]
Every $\Gamma^m$ projects (under piecewise holomorphic or anti-holomorphic maps) 
onto the set $\Gamma_m$ on the dynamic plane of level $m$; defined as
\[\Gamma_m:= \{z_m=\ex(\zeta_{m-1)})\mid z\in \Gamma^m \}.\]  

Fix an $m\in \{0,1,2,\dots \}$, and for $j\geq m$ let 
\[\Pi_j:=\{\zeta_j \mid z\in \Gamma^m \}.\]
Note that here $\zeta_j$ is uniquely determined by $z_j$, by our choice of the constant $D$.
Projecting the above set onto the round cylinder $\mathbb{C}/\mathbb{Z}$, we define 
\[B_j:=\Pi_j/\mathbb{Z}, \text{ for } j=m, m+1,m+2,\dots.\]
That is, $B_j$ is the set of all points $w\in \mathbb{C}/\mathbb{Z}$ for which there exists an integer 
$i$ with $w+i\in \Pi_j$.
Each $B_j$ lies above the line $\Im w=D/\alf_j$. 
We would like to show that these sets have zero area. 
To prove this, we first show that the sequence,
\[b_j:= \sup_{\zeta, \zeta'\in B_j} \Im (\zeta-\zeta'), \text{ for } j=m, m+1,m+2,\dots,\]
satisfies the inequality in the following lemma.
\begin{lem}\label{recursive-heights}
There exists a constant $A$ such that for every $j> m$ we have 
\[b_{j-1}\leq \alf_{j}b_j+ A.\]
\end{lem}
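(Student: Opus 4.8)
The plan is to relate the vertical diameter $b_{j-1}$ of $B_{j-1}$ to that of $B_j$ through the change of coordinate $\psi_j$ (equivalently $\eta_j$), using the fact that points of $\Pi_{j-1}$ and $\Pi_j$ are linked by $z_j = \ex(\zeta_{j-1})$ together with $\Phi_{j-1}^l(z_{j-1}) = \zeta_{j-1}$ or $\Phi_{j-1}^\dagger(\zeta_{j-1}) = z_{j-1}$. Concretely, since $z$ lies in $\Gamma^m$, for $j > m$ we have $\Im\zeta_j \geq D/\alf_j$, so $z_j$ lies in the region where $h^{k_j}$ (hence the whole renormalization construction) is univalent, and $\zeta_{j-1}$ is obtained from $\zeta_j$ by an explicit composition: $z_j = \ex(\zeta_{j-1})$ means $\zeta_{j-1}$ is an $\ex$-preimage of $z_j$, and then $z_{j-1}$ is read off $\Phi_{j-1}^\dagger(\zeta_{j-1})$. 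So the map taking (a lift of) $\zeta_j$ to $\zeta_{j-1}$ is, up to bounded-width horizontal translations, the branch $\eta_j$ of $\ex^{-1}$ composed with $\Phi_{j-1}$ (and the left/right extensions), which has the key feature that it is close to an affine map of multiplier $\alf_j$ when the imaginary part is large.

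First I would fix $z, z'$ in $\Gamma^m$ realizing (nearly) the supremum $b_{j-1}$, with associated $\zeta_{j-1}, \zeta'_{j-1}$ and $\zeta_j, \zeta'_j$. Since $\ex(\zeta_{j-1}) = z_j = \ex(\Phi_j(\cdot))$ up to the correct inverse branch, one writes $\zeta_{j-1} = \eta_j^{\sharp}(\zeta_j) + (\text{integer})$ where $\eta_j^\sharp$ is an inverse branch of $\ex$ post-composed with the (extended) Fatou coordinate change $\Phi_{j-1}$ (or $\Phi_{j-1}^\dagger$, $\Phi_{j-1}^l$). The normalization $\ex(\zeta) = \frac{-4}{27}\overline{e^{2\pi\Bi\zeta}}$ means that $\Im$ scales by $\alf_j$ to leading order: if $w = e^{2\pi\Bi\zeta}$ then $\zeta$ moves in the negative imaginary direction as $|w|$ grows, and the relation $z_j = \frac{-4}{27}\overline{e^{2\pi\Bi\Phi_j^{-1}(\,\cdot\,)}}$... — the point is that $\Phi_{j-1}$ maps a fundamental domain of $f_{j-1}$ (size comparable to $1/\alf_{j-1}$) isomorphically to a strip of width $1/\alf_{j-1}$, and pulling back by $\ex$ contracts the imaginary coordinate by a factor asymptotic to $\alf_j = 1/\alf_{j-1} \bmod 1$; equivalently, the derivative of $\eta_j$ is $O(\alf_j)$ on the relevant region, with an additive error controlled by the distortion of the Fatou coordinate near $0$.

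Second, I would make the contraction estimate uniform and turn it into the stated affine recursion. Both $\zeta_{j-1}$ and $\zeta'_{j-1}$ lie, by \eqref{E:place-of-zeta}, in the region $\Bk'+1/2 \leq \Re\zeta \leq \alf_{j-1}^{-1} - \Bk + k_{j-1} + \Bk'$ (or in the small balls $B(i,\delta_1)$, which contribute a bounded amount to $b_{j-1}$ and get absorbed into the constant $A$). On that region the relevant inverse branch $\eta_j$ of $\ex$ composed with $\Phi_{j-1}^\dagger$ has the form $\zeta \mapsto \alf_j\zeta + (\text{bounded holomorphic error})$ — here the boundedness of the error, uniformly over $h \in \fff$, comes from the precompactness of the Inou--Shishikura class together with part v of Theorem~\ref{Ino-Shi1} (continuous dependence of $\Phi_h$ on $h$) and Lemma~\ref{turning}, and quantitatively from the Fatou coordinate estimates promised in Section~\ref{S:perturbed-coordinate}. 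Applying the mean value estimate along a path from $\zeta_j$ to $\zeta'_j$ inside $B_j$ (which is connected up to $\mathbb{Z}$-translations, or at worst we bound via $b_j$ plus a universal constant for the horizontal spread, which is $\leq \alf_{j-1}^{-1}$, times $\alf_j \leq 1$) yields $|\Im(\zeta_{j-1} - \zeta'_{j-1})| \leq \alf_j\,|\Im(\zeta_j - \zeta'_j)| + A \leq \alf_j b_j + A$, which is exactly the claim after taking the supremum over $z, z' \in \Gamma^m$.

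The main obstacle is the uniformity of the additive constant $A$: one must check that the error term in the ``$\eta_j \approx$ multiplication by $\alf_j$'' approximation is bounded independently of $j$ (hence of $\alf_j$, which can be arbitrarily small) and independently of the base point $z$. This is where the sharp derivative estimate on the Fatou coordinate is needed — a naive bound would give an error growing like $\log(1/\alf_j)$ or worse, which would not close the recursion usefully when summed against the Brjuno condition. The cases where $\zeta_{j-1}$ lands in one of the balls $B(i,\delta_1)$ near the integers $1,\dots,\Bk'$ must be handled separately, but there the vertical extent is bounded by $2\delta_1 \leq 1/4$ outright, so they only enlarge $A$ by a universal amount; similarly the horizontal translations by integers (of size at most $k_{j-1} + \Bk'$, bounded by Lemma~\ref{turning}) and the passage through the extensions $\Phi^l, \Phi^\dagger$ of Subsection~\ref{SS:extensions} contribute only bounded corrections. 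Finally, the horizontal width of $B_j$ being at most $1$ on the cylinder, combined with $\alf_j \leq 1$, lets us replace $|\Im(\zeta_j - \zeta'_j)|$ by $b_j$ throughout without worrying about whether the extremal pair for $b_{j-1}$ maps to the extremal pair for $b_j$.
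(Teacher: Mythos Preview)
Your approach is essentially the paper's: the map sending $\zeta_j$ to $\zeta_{j-1}$ has derivative close to $\alf_j$, and integrating the error along a segment from $\zeta$ to $\zeta'$ gives the additive constant $A$. Two corrections sharpen your write-up. First, the relevant map is $\chi_j:=\mathbb{L}og\circ\Phi_j^\dagger$ (Fatou coordinate at level $j$, not $j-1$): from $z_j=\ex(\zeta_{j-1})$ and $z_j=\Phi_j^\dagger(\zeta_j)$ one has $\zeta_{j-1}=\chi_j(\zeta_j)$, so $\chi_j(\Pi_j)/\mathbb{Z}=B_{j-1}$; your description via $\eta_j$ and $\Phi_{j-1}$ has the level shifted and would not directly yield the needed derivative estimate. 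Second, the case $\zeta_{j-1}\in\cup_i B(i,\delta_1)$ never occurs here: since $j>m$ we have $j-1\geq m$, hence $\Im\zeta_{j-1}\geq D/\alf_{j-1}\gg\delta_1$, so that discussion can be dropped.

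The paper then invokes Proposition~\ref{main-estimate} with $r=1/2$, which gives $|\chi_j'(w)-\alf_j|\leq 2C\alf_j e^{-2\pi\alf_j\Im w}$ on $\Pi_j$, and integrates along the straight segment $\gamma$ from $\zeta$ to $\zeta'$; splitting $\gamma$ into its horizontal span (length $\leq 1/\alf_j+\Bk''$, at height $\geq D/\alf_j$) and vertical span (integral of $e^{-2\pi\alf_j t}$ over $[D/\alf_j,\infty)$) produces the uniform bound you were aiming for. This is exactly the ``sharp derivative estimate'' you flag as the main obstacle, and it is what makes $A$ independent of $\alf_j$.
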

To prove the above lemma, we need an estimate on the derivative of the change of coordinates 
\begin{equation}\label{chi-h}
\chi_h:= \mathbb{L}og \circ \Phi_h^\dagger,
\end{equation}
where $\mathbb{L}og$ is an arbitrary inverse branch of $\ex$ defined on a slit neighborhood of $0$. 

For every $r\in (0,1/2]$ and $\alf\in (0,1)$ define the set 
\[\Theta(r,\alf):=\{w\in\mathbb{C} \mid \Im w\geq-2/\alf\}\setminus  B(\mathbb{Z}/\alf, r/\alf).\]
\begin{propo}\label{main-estimate}
There exists a constant $C$ such that for every $r\in (0,1/2]$, every $h\in \fff$, and every 
$w\in \Dom \chi_h \cap \Theta(r,\alf)$, we have
\begin{equation*}
|\chi' _h(w)-\alf| \leq C\frac{\alf}{r} e^{-2\pi \alf \Im w}.
\end{equation*} 
\end{propo}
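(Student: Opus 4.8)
\textbf{Proof proposal for Proposition~\ref{main-estimate}.}

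The plan is to compare the extended inverse Fatou coordinate $\Phi_h^\dagger$, and hence $\chi_h=\mathbb{L}og\circ\Phi_h^\dagger$, to an explicit model map that already satisfies the desired estimate, and then control the difference using complex-analytic tools (the Schwarz lemma / Koebe distortion on the one hand, and a representation via Green's formula / the Hilbert transform on the other, exactly as advertised in the introduction). First I would set up the model: near $0$, $h$ is of the form $\ea f$ with $f\in\ff$, and after the change of coordinates $\mathbb{L}og$ the map $\Phi_h^{-1}$ becomes, to leading order, the affine map $w\mapsto \alf w + \text{const}$, since one step of $h$ corresponds to a translation by $1$ in Fatou coordinates and a rotation by $\ea$ (i.e.\ a translation by $\alf$ after taking $\mathbb{L}og$). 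So the ``naive'' model is $\chi_h^{\text{mod}}(w)=\alf w + c_h$, whose derivative is exactly $\alf$. The content of the proposition is that the genuine $\chi_h'$ differs from this by something that decays like $e^{-2\pi\alf\Im w}$ as $\Im w\to+\infty$, uniformly over the compact class $\fff$, with the stated blow-up $\alf/r$ as one approaches the punctures $\mathbb{Z}/\alf$ (the lifts of the fixed point $0$ and its $h$-translates, where $\Phi_h^\dagger$ has its singularities).

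The key steps, in order, would be: (1) Using part v of Theorem~\ref{Ino-Shi1} (continuous dependence of $\Phi_h$ on $h$), the precompactness of $\fff$, and the explicit formula \eqref{E:right-extension} for $\Phi_h^\dagger$, reduce to a statement about a single ``limiting'' Fatou coordinate of a parabolic map $f\in\ff$ together with an a priori bound valid for all $\alf\in(0,\alf^*]$; here the uniformity in the compact class is essential and is where the hypothesis $h\in\fff$ is used. (2) Write $\chi_h(w) = \alf w + c_h + E_h(w)$ and show that $E_h$ is a well-defined holomorphic function on $\Dom\chi_h\cap\Theta(r,\alf)$ that extends across the real-translation periodicity, so that (after dividing by $\alf$ and rescaling $w\mapsto \alf w$) one is looking at a $1$-periodic holomorphic function on a half-plane $\{\Im\zeta > -2\}$ minus small disks of radius $r$ around the integers. (3) Apply the main new estimate from Section~\ref{S:perturbed-coordinate} — the exponentially-decaying (in $\Im$) bound on the error of the derivative of the perturbed Fatou coordinate, with uniformly bounded $L^1$-norm over vertical strips of width one — to this periodic error function; a periodic holomorphic function on a half-plane with controlled $L^1$ norm on a fundamental strip has Fourier coefficients decaying geometrically, which yields pointwise bounds of the form $\text{const}\cdot e^{-2\pi\Im\zeta}$ away from the real axis. (4) Translate the bound back through the scaling $w\mapsto\alf w$: this converts $e^{-2\pi\Im\zeta}$ into $e^{-2\pi\alf\Im w}$ and, because the disks of radius $r$ around integers become disks of radius $r/\alf$ around $\mathbb{Z}/\alf$, it produces exactly the factor $1/r$ (the worst case being the term of the singular/polar part of $\Phi_h^\dagger$ nearest a puncture, whose derivative is $O(1/\text{dist})$, i.e.\ $O(\alf/r)$ after rescaling). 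Assembling these gives $|\chi_h'(w)-\alf|\le C\frac{\alf}{r}e^{-2\pi\alf\Im w}$.

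The main obstacle I expect is step (3) together with the passage from an $L^1$/Fourier-coefficient bound to the clean pointwise exponential decay with the correct constant: one must be careful that the ``error'' function really is periodic (or close enough to periodic that the periodic part carries all the slowly-decaying terms while the non-periodic remainder decays even faster), and that the singular behaviour near the punctures $\mathbb{Z}/\alf$ — coming from the logarithmic/rational singularities of $\Phi_h^\dagger$ at $0$ and the preimages encoded in \eqref{E:W}–\eqref{E:right-extension} — is isolated into an explicit model piece whose derivative one can bound by hand, leaving a genuinely holomorphic remainder on the slit half-plane to which the Hilbert-transform / Green's formula machinery of Section~\ref{S:perturbed-coordinate} applies. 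A secondary technical point is ensuring all constants are uniform over $\fff$ and over $r\in(0,1/2]$ simultaneously; this is handled by the compactness argument in step (1) plus the scaling homogeneity of $\Theta(r,\alf)$ in the pair $(r,\alf)$.
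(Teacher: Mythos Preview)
Your proposal has the right overall philosophy (model $+$ error, exponential decay via a periodicity/Fourier mechanism, Green's formula) but it misses the actual technical construction and contains a circularity.

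\textbf{The circularity in step (3).} You propose to ``apply the main new estimate from Section~\ref{S:perturbed-coordinate}'' to your periodic error function. But Proposition~\ref{main-estimate} \emph{is} the main estimate of Section~\ref{S:perturbed-coordinate}; all of that section is devoted to proving it. There is no separate off-the-shelf $L^1$ bound that you can import; the exponentially decaying bound is exactly what has to be built from scratch.

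\textbf{The missing construction.} The paper does not analyse $\chi_h$ directly, nor does it write $\chi_h=\alf w+c_h+E_h$ with $E_h$ periodic (your claimed periodicity of $E_h$ after rescaling is not justified and in fact $\chi_h$ has no natural periodicity). Instead the paper (i) lifts $h$ via the explicit covering $\tau_h(w)=\sigma_h/(1-e^{-2\pi\Bi\alf w})$ to a map $F_h$ close to translation by $1$; (ii) sets $L_h=\Phi_h\circ\tau_h$, which satisfies the Abel equation $L_h\circ F_h=L_h+1$; (iii) constructs by hand a $C^2$ (not holomorphic) model $H(s,t)$, with trigonometric coefficients chosen so that $H(s+1,t)=F_h(H(s,t))$ and so that $H$ is $C^2$ across the seam. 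This manufactures a genuinely $1$-periodic map $G:=L_h\circ H$ with small $\partial_{\bar z}G$ and $\partial_{z\bar z}G$ (Lemma~\ref{G-properties}). Because $G$ is \emph{not} holomorphic, your ``periodic holomorphic function $\Rightarrow$ Fourier decay'' argument does not apply; the paper instead projects $G$ via $e^{2\pi\Bi z}$ to a map $K$ on $\cc$ and uses the generalised Cauchy (Cauchy--Pompeiu) formula with the explicit Laplacian bound on $K$ (Lemma~\ref{K-properties}) to obtain $|\partial_z G-1|\preceq r^{-1}e^{-2\pi\alf\Im z}$. This yields $|L_h'(w)-1|\le Mr^{-1}e^{-2\pi\alf\Im w}$ (Proposition~\ref{fine-tune}). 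Only at the very end is $\chi_h=\mathbb{L}og\circ\tau_h\circ L_h^{-1}$ assembled: one combines the bound on $L_h'$ with the elementary estimate $|(\mathbb{L}og\circ\tau_h)'-\alf|\le C_8\alf e^{-2\pi\alf\Im z}$ (Lemma~\ref{formula-derivative}) and a localisation of $L_h^{-1}(w)$ (via Lemma~\ref{cyl-cond}-iv) to get the stated inequality for $\chi_h'$.

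In short: the creative steps you are glossing over (the lift $\tau_h$, the explicit $C^2$ model $H$ that produces exact periodicity, and the Cauchy--Pompeiu argument on the non-holomorphic $G$) are precisely where the work lies, and your Fourier argument for a holomorphic periodic error cannot substitute for them.
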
 
This lemma is proved in Section~\ref{S:perturbed-coordinate}. 

\begin{proof}[Proof of Lemma~\ref{recursive-heights}]
Let $\chi_n:=\chi_{f_n}$, for $n=0,1,2,\dots$, denote the map defined by \eqref{chi-h} when $h=f_n$.
Since $\Im \zeta_j\geq D/\alf_j\geq \delta_1$, one infers from the definition of the pairs \eqref{E:pairs}
that $\Phi_j^\dagger(\zeta_j):=z_j$.
Therefore, from~\eqref{E:place-of-zeta}, for $j=m, m+1, m+2, \dots$, 
\begin{equation*}\label{half-strip}
\begin{gathered}
\Pi_j\subseteq \{w\in \mathbb{C}\mid \Im w\geq D/\alf_j, \Bk'+1/2\leq \Re w\leq  \Bk'+1/\alf_j-\Bk+k_n\},\\
 \chi_j(\Pi_j)/\mathbb{Z}= B_{j-1}.
\end{gathered}
\end{equation*}
For arbitrary $\zeta, \zeta'\in \Pi_j$, let $\gamma$ be a straight line segment connecting $\zeta'$ to $\zeta$. 
By Proposition~\ref{main-estimate}, with $r=1/2$, we have 
\begin{align*}
|\chi_j(\zeta')-\chi_j(\zeta)-&\alf_j(\zeta'-\zeta)|\\
&=|\int_\gamma \chi_j'-\alf_j \, dw|\\
&\leq \int_\gamma 2 C\alf_j e^{-2\pi \alf_j \Im w} \, dw\\
&\leq 2 C\alf_j e^{-2\pi \alf_j (D/\alf_j)} (\frac{1}{\alf_j}-\Bk+k_n)+ 2 \int_{D/\alf_j}^{\infty} 
C\alf_j e^{-2\pi \alf_j t}\, dt\\
&\leq 2 C (1+\Bk'') e^{-2\pi D}+\frac{e^{-2\pi D}}{\pi}.
\end{align*} 
The result follows from taking the supremum over all $\chi_j(\zeta), \chi_j(\zeta')$, and then all 
$\zeta,\zeta'$.
\end{proof}

\begin{lem}\label{bounded-height}
$\limsup_{j\geq m} b_j<4 A$.
\end{lem}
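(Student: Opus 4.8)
The plan is to turn the recursion of Lemma~\ref{recursive-heights} into an \emph{absolute} bound on the $b_j$'s by iterating it, and to kill the remaining ``boundary term'' using the Brjuno hypothesis. Since $\alf_j<\alf^*$ for every $j$, telescoping $b_{j-1}\leq\alf_jb_j+A$ gives, for all $i\geq m$ and $k\geq1$,
\[
b_i\ \leq\ \Big(\prod_{l=1}^{k}\alf_{i+l}\Big)b_{i+k}+A\sum_{l=0}^{k-1}\prod_{s=1}^{l}\alf_{i+s}\ \leq\ (\alf^*)^{k}\,b_{i+k}+\frac{A}{1-\alf^*}.
\]
So if I can find a subsequence $j_k\to\infty$ along which the $b_{j_k}$ stay bounded, then letting $k\to\infty$ (with $i$ fixed and $i+k$ replaced by $j_k$) forces $b_i\leq A/(1-\alf^*)$ for \emph{every} $i\geq m$; as the Inou--Shishikura constant satisfies $\alf^*\leq 1/2$, this is $\leq 2A<4A$ and the lemma follows. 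The whole point therefore reduces to ruling out $b_j\to\infty$, i.e.\ to showing $\liminf_j b_j<\infty$.

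For that I would first produce a geometric upper bound for $b_j$. Since $\alf$, hence each $\alf_j$, is Brjuno, Brjuno's theorem makes every $f_j\in\ff_{\alf_j}$ linearizable at $0$, so $f_j$ has a Siegel disk $\Delta_j$, and by Yoccoz's lower bound for the linearization radius (applied uniformly over the compact class $\ff$) it contains a ball $B(0,\rho_j)$ with $\log(1/\rho_j)\leq c\,\mathbb{B}(\alf_j)+c'$, where $\mathbb{B}$ is the Brjuno sum. For $z\in\Gamma^m$ the point $z_{j+1}=\ex(\zeta_j)$ lies in $J(f_{j+1})$ (the maps $z_i\mapsto z_{i+1}$ are the semiconjugacies coming from renormalization, under which Julia sets correspond), hence $|z_{j+1}|\geq\rho_{j+1}$. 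Since $|\ex(\zeta_j)|=\tfrac{4}{27}e^{-2\pi\Im\zeta_j}$ and $b_j$ equals the diameter of $\{\Im\zeta_j:z\in\Gamma^m\}$,
\[
b_j\ =\ \frac{1}{2\pi}\log\frac{\sup_{z\in\Gamma^m}|z_{j+1}|}{\inf_{z\in\Gamma^m}|z_{j+1}|}\ \leq\ \frac{1}{2\pi}\log\frac{4}{27\,\rho_{j+1}}\ \leq\ c''\,\mathbb{B}(\alf_{j+1})+c''';
\]
in particular each $b_j$ is finite. Writing $\beta_n:=\alf_0\alf_1\cdots\alf_n$ (so $\beta_n\to0$ and $\beta_n$ is comparable to $1/q_{n+1}$), the Brjuno identity $\beta_j\,\mathbb{B}(\alf_{j+1})=R_j$, with $R_j:=\sum_{i>j}\beta_{i-1}\log(1/\alf_i)$ the tail of the convergent Brjuno series, rewrites this as $b_j\leq c''R_j/\beta_j+c'''$, where $R_j\to0$.

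Now I would argue by contradiction: assume $b_j\to\infty$ and pick $j_0\geq m$ with $b_{j_0}>A/(1-\alf^*)$. Reading Lemma~\ref{recursive-heights} as $b_j\geq(b_{j-1}-A)/\alf_j$ and iterating from $j_0$, the accumulated error terms form a geometric series bounded by $A/(1-\alf^*)$ times the leading factor, giving, for every $J>j_0$,
\[
b_J\ \geq\ \frac{1}{\alf_{j_0+1}\cdots\alf_J}\Big(b_{j_0}-\frac{A}{1-\alf^*}\Big)\ =\ \frac{\beta_{j_0}}{\beta_J}\Big(b_{j_0}-\frac{A}{1-\alf^*}\Big)\ =:\ \frac{c_0}{\beta_J},\qquad c_0>0.
\]
Combined with the bound of the previous paragraph, $c_0/\beta_J\leq c''R_J/\beta_J+c'''$, i.e.\ $c_0\leq c''R_J+c'''\beta_J$; letting $J\to\infty$ the right-hand side tends to $0$, contradicting $c_0>0$. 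Hence $\liminf_j b_j<\infty$, and the lemma is proved.

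The hard part is exactly this last step. The recursion of Lemma~\ref{recursive-heights} only controls $b_j$ from above in terms of \emph{later} terms, so it cannot by itself prevent $b_j\to\infty$; the Brjuno condition is essential here, as it makes the geometric rate $1/\beta_j\asymp q_{j+1}$ that the recursion would force incompatible with the genuinely slower growth $\mathbb{B}(\alf_{j+1})=R_j/\beta_j$ of the Brjuno sums, which bounds $b_j$ through the size of the Siegel disks of the $f_{j+1}$. (For non-Brjuno $\alf$ there is no Siegel disk, the finiteness of $b_j$ already fails in this form, and that regime was treated by the porosity argument of \cite{Ch10-I}.)
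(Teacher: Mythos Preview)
Your argument is correct and uses the same two ingredients as the paper: the iterated recursion $b_j\le\big(\prod_{l=1}^{k}\alf_{j+l}\big)b_{j+k}+A\sum_{l\ge0}\prod_{s=1}^{l}\alf_{j+s}$, and the Yoccoz lower bound on the Siegel radius of $f_{j+1}$ giving $b_j\preceq\mathcal{B}(\alf_{j+1})$. The paper, however, organizes this more directly: rather than reducing to $\liminf_j b_j<\infty$ and then arguing by contradiction, it simply substitutes the Yoccoz bound $b_l\le\mathcal{B}(\alf_{l+1})/2\pi$ into the telescoped inequality to get $b_j\le\tfrac{1}{2\pi}\alf_{j+1}\cdots\alf_l\,\mathcal{B}(\alf_{l+1})+4A$, and observes that the first term is a tail of the Brjuno series for $\alf_{j+1}$, hence tends to~$0$ as $l\to\infty$. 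Your contradiction step is the contrapositive of exactly this computation, so the two proofs are equivalent; the paper's version is just shorter. One small remark: you invoke $\alf^*\le 1/2$, which is not stated as such in the paper, but the effective bound $\alf_i<1/N$ with $N\ge\Bk+\hat\Bk+1\ge 2$ (and the paper's use of the general fact $\alf_i\alf_{i+1}\le 1/2$) makes this harmless.
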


\begin{proof}
For $j\geq 0$, recall the Brjuno sum, 
\[\mathcal{B}(\alf_j):= \log \frac{1}{\alf_j}+ \alf_j\log \frac{1}{\alf_{j+1}}
+\alf_j\alf_{j+1}\log \frac{1}{\alf_{j+2}}+\cdots.\]  
By a result of Yoccoz \cite{Yoc95} (this also follows from \cite{Ch10-I}), the Siegel disk of $f_j$ 
contains the disk of radius $e^{-\mathcal{B}(\alf_j)+M}$ about the origin, for some universal constant $M$. 
This implies that for every $j\geq m$, we have $b_j\leq \mathcal{B}(\alf_{j+1})/2\pi$.

Fix $j\geq m$, and let $l$ be an arbitrary integer bigger than $j$.
Inductively using the inequality in Lemma~\ref{recursive-heights}, for $j+1, j+2, \dots, l$, we obtain 
\[b_j \leq \alf_{j+1}\alf_{j+2}\cdots \alf_{l}\, b_{l}
+(1+\alf_{j+1}+ \alf_{j+1}\alf_{j+2}+ \dots+ \alf_{j+1}\alf_{j+2}\cdots \alf_{l-1})A.\]
Replacing $b_l$ by $\mathcal{B}(\alf_{l+1})/2\pi$, and that $\alf_i\alf_{i+1}\leq 1/2$, for all $i$, we come up with 
\[b_j\leq \dfrac{1}{2\pi}\alf_{j+1}\alf_{j+2}\cdots \alf_{l}\mathcal{B}(\alf_{l+1})+ 4A.\]

As $l$ tends to infinity, the first term in the above sum, which is the tail of the Brjuno series for $\alf_{j+1}$, 
tends to zero.
\end{proof}

\begin{lem}\label{contracting-area}
There exists a constant $\eps>0$ such that if $\alf_j<\eps$ for some $j\geq m$, then 
$\area B_{j-1}<\frac{1}{2}\area B_j$.
\end{lem}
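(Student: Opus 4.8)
The plan is to exploit the estimate of Proposition~\ref{main-estimate} once more, but this time to control not merely the vertical diameter of $B_{j-1}$ (as in Lemma~\ref{recursive-heights}) but its actual area. Recall that $B_{j-1}=\chi_j(\Pi_j)/\mathbb{Z}$, where $\chi_j=\mathbb{L}og\circ\Phi_j^\dagger$ and $\Pi_j$ lies in the half-strip $\{\Im w\geq D/\alf_j,\ \Bk'+1/2\leq \Re w\leq \Bk'+1/\alf_j-\Bk+k_n\}$. First I would observe that $\chi_j$ descends to a well-defined holomorphic map on the quotient cylinder above the corresponding line, since $\chi_j(w+1)=\chi_j(w)+1$ by the equivariance built into $\Phi_j^\dagger$ and the choice of $\mathbb{L}og$; hence $\area B_{j-1}=\int_{\Pi_j/\mathbb{Z}}|\chi_j'(w)|^2\,\mathrm{d}A(w)$ by the change-of-variables formula for holomorphic maps (the map is injective on the relevant region once $\Im\zeta_j\geq D/\alf_j\geq D\Bi$, by the univalence of $h^{k_h}$ above height $D$ noted after \eqref{constant-D2}, together with the univalence of $\Phi_h^{-1}$).

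Next, by Proposition~\ref{main-estimate} with $r=1/2$, on $\Pi_j$ we have $|\chi_j'(w)-\alf_j|\leq 2C\alf_j e^{-2\pi\alf_j\Im w}\leq 2C\alf_j e^{-2\pi D}$, using $\Im w\geq D/\alf_j$. Therefore $|\chi_j'(w)|\leq \alf_j(1+2Ce^{-2\pi D})$ pointwise on $\Pi_j$, and consequently
\[
\area B_{j-1}\leq \alf_j^2(1+2Ce^{-2\pi D})^2\,\area(\Pi_j/\mathbb{Z}).
\]
The remaining point is to compare $\area(\Pi_j/\mathbb{Z})$ with $\area B_j$. Since $z_j=\ex(\zeta_{j-1})$ and, for indices $\geq m$, the pairs are related by $z_{j+1}=\ex(\zeta_j)$ with $\Im\zeta_j\geq D/\alf_j$, the set $B_j$ is, up to the cylinder projection, the image of $\Pi_{j+1}$ under $\chi_{j+1}$... more directly: $\Pi_j$ is obtained from $B_{j}$ by the inclusion coming from $z_{j+1}=\ex(\zeta_j)$, i.e.\ $B_j\supseteq \Pi_j/\mathbb{Z}$ after accounting for the $1/\alf_j$ horizontal translates that $\ex$ identifies. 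Concretely, $\ex$ restricted to a fundamental strip of width $1$ for $\zeta_j$ maps onto (a sheet of) the dynamic plane of level $j+1$, and the horizontal translations $w\mapsto w+k$ for $0\leq k\leq 1/\alf_j-\Bk+k_j+\Bk'$ inside $\Pi_j$ all project to the same point of $B_j$; hence $\Pi_j/\mathbb{Z}$ (which already has width $1$) injects into $B_j$, giving $\area(\Pi_j/\mathbb{Z})\leq \area B_j$. Wait — this needs care: one actually gets $\area(\Pi_j/\mathbb{Z}) \le \area B_j$ because distinct points of $\Pi_j/\mathbb Z$ lifting to the same point of $B_j$ cannot occur once $\zeta_j$ is uniquely determined by $z_j$, which is exactly our standing hypothesis in the definition of $\Gamma^m$. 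Combining the two displayed bounds,
\[
\area B_{j-1}\leq \alf_j^2(1+2Ce^{-2\pi D})^2\,\area B_j,
\]
and choosing $\eps>0$ small enough that $\eps^2(1+2Ce^{-2\pi D})^2<1/2$ finishes the proof.

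The step I expect to be the main obstacle is the area comparison $\area(\Pi_j/\mathbb{Z})\leq \area B_j$: one must check that the cylinder projection of $\Pi_j$ is genuinely injective — equivalently, that the horizontal width of $\Pi_j$ before quotienting does not exceed $1$, or that overlapping sheets do not inflate the measure — and relate this cleanly to the set $B_j$ on the next level, which is defined via $\ex(\zeta_{j-1})$ rather than via a translate of $\Pi_j$. This is essentially a bookkeeping issue about how $\ex$ wraps the width-$(1/\alf_j-\Bk+\cdots)$ strip containing $\Pi_j$ around the thin cylinder, and it is the one place where the precise combinatorial relation between consecutive levels in the tower (Lemma~\ref{conjugacy}, Lemma~\ref{renorm}) must be invoked with care. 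Everything else — the Jacobian formula for holomorphic maps, the pointwise derivative bound from Proposition~\ref{main-estimate}, and the final choice of $\eps$ — is routine.
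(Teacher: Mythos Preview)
Your argument has a genuine error at the very first step. You assert that $\chi_j(w+1)=\chi_j(w)+1$, so that $\chi_j$ descends to a map on the cylinder $\Pi_j/\mathbb{Z}$. This is false. The equivariance of the extended inverse Fatou coordinate is $\Phi_j^\dagger(w+1)=f_j(\Phi_j^\dagger(w))$, and applying $\mathbb{L}og$ to both sides gives $\chi_j(w+1)=\mathbb{L}og\bigl(f_j(\Phi_j^\dagger(w))\bigr)$, which has no reason to equal $\chi_j(w)+1$ (nor even $\chi_j(w)\bmod\mathbb{Z}$, since $\ex(\chi_j(w))=\Phi_j^\dagger(w)\neq f_j(\Phi_j^\dagger(w))$ away from fixed points). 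So $\chi_j$ does \emph{not} factor through $\Pi_j/\mathbb{Z}$, and you cannot write $\area B_{j-1}=\int_{\Pi_j/\mathbb{Z}}|\chi_j'|^2$.

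This is exactly why the paper integrates over the full set $\Pi_j$ rather than its quotient: one has $\area B_{j-1}\leq\area\chi_j(\Pi_j)\leq\int_{\Pi_j}|\chi_j'|^2$, and since $\Pi_j$ is a strip of horizontal width of order $1/\alf_j$, one splits it into roughly $1/\alf_j+\Bk''$ vertical strips of width $1$, each of which projects injectively into $B_j=\Pi_j/\mathbb{Z}$ and hence has area at most $\area B_j$. Combined with $|\chi_j'|^2\preceq\alf_j^2$ this yields $\area B_{j-1}\preceq(1/\alf_j)\cdot\alf_j^2\cdot\area B_j=\alf_j\cdot\area B_j$, which is the correct contraction rate (linear in $\alf_j$, not quadratic as you claim). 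Your final bound $\area B_{j-1}\leq\alf_j^2(1+2Ce^{-2\pi D})^2\area B_j$ is too strong and is an artifact of the illegitimate passage to the quotient. Incidentally, your step~5 is a non-issue: $\area(\Pi_j/\mathbb{Z})=\area B_j$ is the definition, so the lengthy discussion there is unnecessary; the real difficulty lies where you thought it was routine.
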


\begin{proof}
Using Proposition~\ref{main-estimate} with $h=f_j$, $\chi_j:= \chi_{f_j}$, and $r=1/2$,  
\begin{align*}
\area B_{j-1}&= \area \chi_j(\Pi_j)\\
&\leq \int_{\Pi_j} |\chi_j'|^2\, \frac{dz\wedge d\bar{z}}{2}\\
&\leq (\frac{1}{\alf_j}+\Bk'')\cdot \max_{\ell\in \mathbb{Z}} 
\int_{\{w\in \Pi_j| \ell \leq \Re w\leq \ell+1]\} } |\chi_j'|^2\, \frac{dz\wedge d\bar{z}}{2}\\
&\leq (\frac{1}{\alf_j}+\Bk'')(2 C\alf_n e^{-2\pi D})^2 \area B_j\\
&\leq \frac{1}{2}\area B_j,
\end{align*}
with the last inequality for small enough values of $\alf_j$.
\end{proof}

\begin{propo}
The  set $\Gamma$  has zero area.
\end{propo}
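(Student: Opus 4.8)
The plan is to peel off one level of the renormalization tower at a time, reducing the claim to the statement that each set $B_m$ has zero area, and then to obtain that from the contraction in Lemma~\ref{contracting-area} together with the height bound in Lemma~\ref{bounded-height}.

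First, since $\Gamma\subseteq\bigcup_{m\ge 0}\Gamma^m$ is a countable union it suffices to show $\area\Gamma^m=0$ for each fixed $m$. Recall that $\Gamma^m$ projects onto the set $\Gamma_m$ in the dynamic plane of $f_m$ under piecewise holomorphic or anti-holomorphic maps; being locally biholomorphic away from a discrete critical set, these reduce the problem to $\area\Gamma_m=0$. Since $\Im\zeta_m\ge D/\alf_m\ge\delta_1$, relation~\eqref{sign-property} forces $z_m=\Phi_m^\dagger(\zeta_m)$, so $\Gamma_m\subseteq\Phi_m^\dagger(\Pi_m)$, and as $\Phi_m^\dagger$ is holomorphic with a discrete critical set it is enough to have $\area\Pi_m=0$. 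Finally, by \eqref{E:place-of-zeta} and Lemma~\ref{turning} the set $\Pi_m$ lies in a vertical strip of width at most $1/\alf_m+\Bk''$ --- the excluded balls in \eqref{E:place-of-zeta} are irrelevant because $\Im\zeta_m>\delta_1$ --- and projects injectively on each unit sub-strip, modulo $\mathbb{Z}$, into $B_m$; hence $\area\Pi_m\le(1/\alf_m+\Bk''+1)\,\area B_m$. Everything thus reduces to proving $\area B_m=0$ for every $m$.

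For this I would use Lemma~\ref{contracting-area}, which provides a constant $\eps>0$ depending only on the Inou--Shishikura class (through the constants of Proposition~\ref{main-estimate}), not on $N$. Enlarging the constant $N$ of Section~\ref{sec:prelim} once more if necessary, in the spirit of the remarks around \eqref{alf*-1} and \eqref{E:k'-condition}, I may assume $N>1/\eps$; then $\alf_j\le\alf^*\le 1/N<\eps$ at every level $j$, so Lemma~\ref{contracting-area} applies at every level and gives $\area B_i<\tfrac12\,\area B_{i+1}$ for all $i\ge m$. Iterating,
\[
\area B_m<2^{-(l-m)}\,\area B_l\qquad\text{for every }l>m.
\]
By Lemma~\ref{bounded-height}, $\limsup_l b_l<4A$, and since $B_l\subset\mathbb{C}/\mathbb{Z}$ is contained in a box of circumference $1$ and height $b_l$, $\area B_l\le b_l<4A$ for all large $l$. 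Letting $l\to\infty$ therefore forces $\area B_m=0$, and by the reduction above $\area\Gamma^m=0$ for every $m$, so $\area\Gamma=0$.

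The step I expect to be most delicate is the opening reduction: one must verify carefully that the iterated composition of the (only locally injective, critically ramified) extended Fatou coordinates with the countably many branches of $\ex^{-1}$ genuinely sends null sets to null sets, so that no area is created when one climbs from $B_m$ back up to $\Gamma^m$; one also needs the routine but necessary checks that $\Pi_j$ lies in the region $\Theta(1/2,\alf_j)$ where Proposition~\ref{main-estimate} (hence Lemma~\ref{contracting-area}) applies, and that $\Pi_j$ sits in a strip of width $O(1/\alf_j)$. A point worth isolating is that $\eps$ --- equivalently, the constant $K$ in the direct bound $\area B_{j-1}\le K\alf_j\,\area B_j$ underlying Lemma~\ref{contracting-area} --- is independent of $N$, which is precisely what makes the enlargement of $N$ legitimate rather than circular.
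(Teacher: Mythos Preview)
Your reduction from $\Gamma$ to the sets $B_m$ is correct and matches the paper's; the place where you diverge is in the endgame. The paper does not enlarge $N$ again. Instead it splits according to the tail of the sequence $(\alpha_j)$: in \emph{Case~i} (eventually $\alpha_j\ge\eps$) the rotation number is of bounded type, the Brjuno sums $\mathcal{B}(\alpha_j)$ are uniformly bounded, and the set $\Gamma$ is in fact empty (for $D$ chosen large enough at the outset); in \emph{Case~ii} (infinitely many $\alpha_j<\eps$) one combines the uniform height bound $b_j<4A$ from Lemma~\ref{bounded-height} with Lemma~\ref{contracting-area} at those infinitely many levels to force $\area B_m=0$.

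Your route---assuming $N>1/\eps$ so that Lemma~\ref{contracting-area} fires at \emph{every} level---is legitimate and cleaner: it collapses the dichotomy into a single geometric-decay argument, and your observation that $\eps$ depends only on $C$, $\Bk''$, and $D$ (all fixed before $N$ is chosen) correctly rules out circularity. The cost is that you prove the proposition only for a possibly larger $N$, i.e.\ a smaller class $\irr$, whereas the paper's case split works for the $N$ already fixed after \eqref{alf*-1} and \eqref{E:k'-condition}. Since the paper treats $N$ as ``sufficiently large'' throughout, this is a difference in bookkeeping rather than in substance; but it is worth being explicit that the enlargement of $N$ weakens the final theorem, and that the paper's Case~i argument (bounded type $\Rightarrow$ $\Gamma=\varnothing$) is what buys back that range of parameters.
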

\begin{proof}
As $\Gamma=\cup_{m\geq 0} \Gamma^m$, it is enough to show that each $\Gamma^m$ has zero area.
Clearly, $\Gamma^m$ has zero area if and only if $\Gamma_m$ has zero area. 
To prove this, we show that $B_m$ has zero area in the two separate cases. 
\medskip

{\em Case i:} Eventually $\alf_j\geq \eps$, where $\eps$ is obtained in Lemma~\ref{contracting-area}.

\noindent For these values of $\alf$, $\mathcal{B}(\alf_j)$ is 
uniformly bounded, and hence, the set $\Gamma$ is empty for large enough $D$.   
(In this case, $\alf$ is of bounded type, and the boundary of the Siegel disk is known to be a quasi-circle 
passing through the critical point.) 
\medskip

{\em Case ii:} There are arbitrarily large $j$ with $\alf_j<\eps$.

\noindent By Lemma~\ref{bounded-height}, $\area B_j\leq 4 A$, for all $j$. 
On the other hand, there are infinitely many levels $j$ with $\area B_{j-1}< \frac{1}{2}\area B_j$.  
\end{proof}
\subsection{Area of $\Lambda$}\label{SS:porous-part}
\begin{propo}\label{prop:non-uniformly-porous}
The set $\cap_{n=0}^\infty \Omega^n_0\cap J(P_\alf)$ is non-uniformly porous at every point in $\Lambda$. 
In particular, $\Lambda$ has zero area.
\end{propo}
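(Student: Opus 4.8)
\textbf{Proof plan for Proposition~\ref{prop:non-uniformly-porous}.}
The plan is to exploit the structure of $\Lambda$: each point $z_0\in\Lambda$ has infinitely many levels $m$ with $\Im\zeta_m<D/\alf_m$, which is precisely where the renormalization picture forces a ``complementary hole'' to appear at a controlled scale. First I would fix such a point $z_0$ and an arbitrarily large level $m$ with $\Im\zeta_m<D/\alf_m$. On the dynamic plane of $f_m$, the Fatou coordinate image of $\Omega^0_m$ is (up to the extensions of Subsection~\ref{SS:extensions}) contained in the vertical half-strip $\{0<\Re w<1/\alf_m-\Bk-1\}$, and the points whose $\Phi_m$-image has imaginary part below $D/\alf_m$ sit in the ``lower'' part of this strip, near the amoeba surrounding $0$. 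Since $\C_m$, $\Csh_m$, $S^0_m$ and the finitely many iterates in $\Omega^0_m$ all have uniformly bounded geometry (compactness of $\fff$, continuous dependence of $\Phi_h$, Lemmas~\ref{turning} and~\ref{L:extra-space}), there is a definite Euclidean ball $B(w_0,\lambda r_0)$, with $\lambda$ a universal constant, that is disjoint from $\textrm{Fil}(\Omega^0_m)$ and sits at a point of $\Omega^0_m$ whose distance to the relevant boundary component is comparable to $r_0$; the key is that when $\Im\zeta_m<D/\alf_m$ the point $z_m=\ex(\zeta_{m-1})$ lands in $\C_m\cup\Csh_m$ at bounded height, a region of the $f_m$-plane where such a complementary hole genuinely exists because the sectors $\C_m^{-k}$, $(\Csh_m)^{-k}$ and their images do not tile a full neighborhood.

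Second, I would transport this hole down the renormalization tower back to the $f_0$-plane. The change of coordinates from level $m$ to level $0$ is the composition $\Psi_m$ followed by the finitely many iterates of $f_0$ hidden inside the definition of $\Omega^n_0$, interleaved on each level with $\ex$ and the extended Fatou coordinates $\Phi_j^\dagger$, $\Phi_j^l$; on each level the relevant map is a univalent (or at worst degree-two branched, away from the finitely many critical values) map defined on a domain that compactly contains the region we care about, because $\psi_n=\Phi_{n-1}^{-1}\circ\eta_n$ extends to the larger domain $V\supset\overline{U}$ by Theorem~\ref{Ino-Shi2}. By the Koebe distortion theorem applied level by level, a ball of relative radius $\lambda r_0$ disjoint from $\textrm{Fil}(\Omega^0_m)$ pulls back to a ball of relative radius $\lambda' r_0'$ — with $\lambda'$ still universal, absorbing the bounded distortion from each of the boundedly many levels and boundedly many polynomial iterates per level — disjoint from $\textrm{Fil}(\Omega^0_0)$, hence disjoint from $\pc(P_\alf)$ and a fortiori from $\cap_n\Omega^n_0\cap J(P_\alf)$. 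The center of the pulled-back hole lies within comparable distance of $z_0$, since $z_0$ and $w_0$ are connected through the same tower of maps. Letting $m$ run over the infinitely many good levels produces radii $r_n\to 0$ (because $\Omega^{n+1}_0$ is compactly inside $\Omega^n_0$, so the nested pieces shrink) with the required porosity property, which gives non-uniform porosity at $z_0$ and, by the standard Lebesgue density argument, $\area\Lambda=0$.

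I would organize the write-up as: (1) a lemma producing, on the $f_m$-plane, a complementary hole of definite relative size near any point of $\C_m\cup\Csh_m$ of bounded height, with the constant uniform over $h\in\fff$ — this uses only compactness and the explicit geometry of the Inou--Shishikura sectors; (2) a ``distortion along the tower'' lemma stating that pulling back from level $j$ to level $j-1$ changes relative sizes of balls by a uniformly bounded factor, proved by Koebe applied to $\psi_j$ on $V\supset U$ together with the boundedly many $f_{j-1}$-iterates and the covering $\ex$ restricted to a region of bounded geometry; (3) composition of (2) over the boundedly many levels between the good level $m$ and level $0$, noting the number of levels used is bounded because we only descend from level $m$ to level $0$, i.e.\ exactly $m$ steps — wait, that is \emph{not} bounded, which is the crux.

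\textbf{Main obstacle.} The genuine difficulty is exactly this: descending from level $m$ to level $0$ requires $m$ compositions, and $m\to\infty$, so a naive ``bounded distortion per level'' argument would let the constant $\lambda'$ degenerate to $0$. The resolution — and the real content — must be that the maps $\psi_j$ are \emph{uniformly contracting} in a suitable hyperbolic sense, so that the composition $\Psi_m$ restricted to the region carrying the hole has distortion bounded \emph{independently of $m$}, not just a bounded product. Concretely, one should set up the hole inside $S^m_0$ (or a fixed sub-sector of it) where $\Psi_m$ is defined, and use that each $\psi_j$ maps its domain well inside the domain of $\psi_{j-1}$ with a definite hyperbolic gap, so the Koebe distortion of the whole composition on the relevant compact subset is uniformly bounded by a Schwarz-lemma/telescoping estimate (this is the same mechanism by which $\cap\Omega^n_0$ has controlled geometry, and is implicit in the global estimates of \cite{Ch10-I}). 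Getting this uniformity — rather than the number of levels — is where the argument lives; once it is in place, steps (1)--(3) assemble routinely into non-uniform porosity and hence zero area.
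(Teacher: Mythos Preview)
Your proposal is correct and follows essentially the same architecture as the paper's proof: produce a hole of definite hyperbolic size near $z_m$ at each good level $m$, transport it to level $0$ through a chain of changes of coordinates, and control the distortion of the full composition via uniform hyperbolic contraction of the single-step maps together with Koebe. You have also correctly isolated the real obstacle (the number of levels is unbounded) and its resolution (contraction, not bounded products).

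Two points of execution where the paper is sharper than your sketch and which you should adopt. First, you cannot simply use $\Psi_m$: the point $z_m$ lies in $\Omega^0_m$, not necessarily in $\p_m$, and the relation between $z_0$ and $z_m$ is through the specific sequence $(z_i,\zeta_i)$, not through $\Psi_m$. The paper therefore builds level-by-level maps $g_l:V_l\to V_{l-1}$ with $g_l(z_l)=z_{l-1}$, defined case-by-case as $\Phi_{l-1}^{-1}\circ\eta_l$ or $\Phi_{l-1}^\dagger\circ\eta_l$ according to where $\zeta_{l-1}$ falls (this is where the extended coordinates of Subsection~\ref{SS:extensions} enter). Second, the contraction is proved not by quoting Theorem~\ref{Ino-Shi2} on $V\supset U$ but by exhibiting, for each $g_l$, a factorization through an inclusion $\eta_l(V_l)\hookrightarrow B_{\delta_1}(\eta_l(V_l))$ whose hyperbolic contraction factor is a fixed $\delta_4<1$ depending only on $\delta_1$ and $\Bk'$; this yields a definite contraction per step. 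Combined with the fact that each $g_l$ is univalent on a fixed-size hyperbolic ball about $z_l$ (even in the branched case, because the critical values are uniformly far from $z_{l-1}$), one waits a \emph{bounded} number of steps until the hole-plus-geodesic fits inside that univalence ball, and then Koebe controls the entire remaining tail at once. This last splitting into $G_n=G_n^u\circ G_n^l$ is the precise mechanism you allude to as ``Schwarz-lemma/telescoping,'' and it is worth making explicit.
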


\begin{rem}
In \cite{Ch10-I} we prove that when $\alpha$ is a non-Brjuno number, for every non-zero 
point in $\cap_{n=0}^\infty \Omega^n_0$, there exists $D$ such that infinitely often 
$\Im \zeta_n \leq D/\alpha_n$. 
In other words, the intersection is contained in the union of all $\Lambda$, over all $D\in (0, +\infty)$.  
Then, this was used to show that such a point can not be the Lebesgue density point of $\pc(f_0)$. 
For the reader's convenience we present an argument, similar in principle but 
simpler in technical details, to produce the same result.
\end{rem}

\begin{lem}\label{L:free-balls}
There are $\delta_2, \delta_3\in \mathbb{R}$ such that for every $n\geq 0$ with $\Im \zeta_n< D/\alpha_n$, 
there exist a simply connected domain $V_n\subset \cc\setminus \{0\}$ which equipped with its conformal 
metric $\rho_n|dz|$ of constant curvature $-1$ satisfies the following: 
\smallskip

\begin{itemize}
  \setlength{\itemsep}{.4em}
\item[i.] $V_n\subset  \Dom f_n\cap f_n(\Dom f_n)$, and $z_n\in V_n$;
\item[ii.] There exists $z_n'\in V_n$ with $d_{\rho_n}(z_n, z_n')\leq \delta_2$ such that 
\[B_{\rho_n}(z'_n, \delta_3) \cap \Omega^0_n=\varnothing, \text{ and } 
f_n(B_{\rho_n}(z'_n, \delta_3)) \cap \Omega^0_n=\varnothing;\]
\item[iii.] $\sup_{z,w\in V_n} |\arg z-\arg w|\leq \Bk'$, for every branch of argument defined on $V_n$.
\end{itemize}
\end{lem}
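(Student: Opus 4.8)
\textbf{Proof proposal for Lemma~\ref{L:free-balls}.}
The plan is to construct the domains $V_n$ by pulling back, via the Fatou coordinates, a fixed configuration in the coordinate plane where the statement is transparent. Recall that $\Im\zeta_n<D/\alpha_n$ means the point $\zeta_n$ sits at bounded height compared to the full height $1/\alpha_n$ of the strip $\Phi_n(\p_n)$, so there is ``room above'' it inside $\mathcal{W}_n$ (the domain of $\Phi_n^\dagger$ from \eqref{E:W}). The first step is therefore to work with the point $w_n:=\zeta_n$ in the $w$-plane and to locate, at a definite distance above and/or to the side of $w_n$ (distance bounded independently of $n$, measured in the Euclidean metric of the $w$-plane), a Euclidean disk $B(w_n', \delta)$ that is disjoint from $\ex^{-1}\big(\Omega^0_n\big)\cup\big(\ex^{-1}(\Omega^0_n)-\nolinebreak\text{(one renorm. step)}\big)$, using that $\Omega^0_n$ corresponds under $\ex$ to the union \eqref{union-0} whose complement in a strip of height $\gtrsim 1/\alpha_n$ contains, near every point of bounded height, a definite Euclidean gap --- this is exactly the kind of ``complementary hole'' statement that comes from comparing $f_n$ to the rotation by $\alpha_n$ and from the precompactness of $\ff$. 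The second step is to push this disk, together with a bounded neighborhood of the segment joining $w_n'$ to $w_n$, forward by $\Phi_n^\dagger$ to obtain $V_n$ in $\Dom f_n$; one then verifies (i) using the definition of $\mathcal{W}_n$ and the fact that $\Phi_n^\dagger$ maps into $\Dom f_n\cap f_n(\Dom f_n)$ by Lemma~\ref{L:extra-space}, (ii) because $\Phi_n^\dagger$ conjugates (a unit translation in the $w$-plane) to (one iterate of $f_n$) so the two disjointness conditions in the $z$-plane follow from a single disjointness condition one unit lower in the $w$-plane, and (iii) from \eqref{E:k'-spiral}/\eqref{E:k'-condition} which bound the total turning of $\arg$ over any region that embeds via the Fatou coordinate into a single strip-minus-slit.

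The mechanism for transporting the hyperbolic data is the Koebe/Schwarz lemma applied to the univalent restriction of $\Phi_n^\dagger$: choose the disk $B(w_n',\delta)$ small enough (fixed $\delta$, independent of $n$) that $\Phi_n^\dagger$ is univalent on a definite Euclidean neighborhood $U_n$ of $B(w_n',\delta)\cup[w_n', w_n]$ in $\mathcal{W}_n$ --- this is possible because the critical values of $\Phi_n^\dagger$ lie at $\cv_n, f_n(\cv_n),\dots$, i.e.\ at the images of $\Phi_n(S_n)+j$, $0\le j\le \Bk'$, so by choosing $w_n'$ to avoid a bounded neighborhood of those finitely many translates of $\Phi_n(S_n)$ we stay in the region of univalence. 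Then set $V_n:=\Phi_n^\dagger(U_n)$, equip it with its own hyperbolic metric $\rho_n|dz|$, and observe that $\Phi_n^\dagger\colon U_n\to V_n$ is a hyperbolic isometry from $(U_n,\rho_{U_n})$ onto $(V_n,\rho_n)$; since $U_n$ contains a fixed Euclidean disk around $w_n'$ and a bounded-length segment from $w_n'$ to $w_n$, the $\rho_{U_n}$-distance from $w_n$ to $w_n'$ is bounded by an absolute constant $\delta_2$ and $B(w_n',\delta)$ contains a fixed hyperbolic ball $B_{\rho_{U_n}}(w_n',\delta_3)$; transporting by the isometry gives exactly the $z_n'$, $\delta_2$, $\delta_3$ required in (ii). Disjointness from $\Omega^0_n$ is inherited because $V_n\cap\Omega^0_n=\varnothing$ would force $U_n$ to meet $\ex^{-1}(\Omega^0_n)$ (as $\ex\circ\Phi_n^\dagger$ is essentially the identity on $\mathcal{W}_n$ modulo $\mathbb{Z}$), contrary to the choice of the $w$-plane disk; and the $f_n$-image condition is the same assertion for the translate $U_n-1$.

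The main obstacle is the first step: proving that at every point of bounded height $\Im w<D/\alpha_n$ the set $\ex^{-1}(\Omega^0_n)$ --- equivalently $\bigcup_{i=0}^{k_n+\lfloor1/\alpha_n\rfloor-\Bk-1}\Phi_n(f_n\co i(S^0_n))$ up to integer translation --- genuinely omits a Euclidean disk of radius bounded below independently of $n$, and simultaneously omits the translate of that union by one renormalization step. This is where the dynamics of $f_n$ versus the rigid rotation by $\alpha_n$ enters: the sectors $f_n\co i(S^0_n)$ are thin tongues whose widths in the $\Phi_n$-coordinate are controlled (each $\C_n,\Csh_n$ has $\Re\Phi_n$-width $1$ and $f_n\co i$ translates by roughly $1$), so in a horizontal strip of height $\le D/\alpha_n$ they cover only a controlled proportion and leave definite gaps; making this quantitative and uniform over $h\in\ff$ and over $n$, while also handling the natural extensions $\Phi_n^l,\Phi_n^\dagger$ and the passage ``one unit lower'' for the $f_n$-image, is the technical heart of the argument. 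Once the Euclidean gap is established, steps two and three are routine applications of the Koebe distortion theorem, the Schwarz lemma for hyperbolic metrics, and the spiraling bound \eqref{E:k'-spiral}.
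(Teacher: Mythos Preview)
Your overall architecture---work in the Fatou coordinate, find a Euclidean disk disjoint from the lift of $\Omega^0_n$, transport hyperbolic data through $\Phi_n^\dagger$ by Koebe/Schwarz---matches the paper. The hyperbolic isometry argument and the use of \eqref{E:k'-spiral} for part~(iii) are fine. But the step you call ``the technical heart'' is based on a wrong picture, and as stated it would fail.

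You look for the free disk ``above and/or to the side'' of $\zeta_n$, arguing that the tongues $f_n\co{i}(S^0_n)$ are thin and therefore ``cover only a controlled proportion and leave definite gaps'' in a horizontal strip. This is not what happens. For $i\geq k_n$ one has $f_n\co{i}(S^0_n)=f_n\co{(i-k_n)}(\C_n\cup\Csh_n)$, and since $\Phi_n(\C_n\cup\Csh_n)=\{w:\tfrac12\le\Re w\le\tfrac32,\ \Im w>-2\}$ and $\Phi_n\circ f_n=\Phi_n+1$, the Fatou image of $\bigcup_{i=k_n}^{k_n+\lfloor1/\alpha_n\rfloor-\Bk-1}f_n\co{i}(S^0_n)$ is exactly the half-strip $\{\tfrac12\le\Re w\le\lfloor1/\alpha_n\rfloor-\Bk+\tfrac12,\ \Im w>-2\}$. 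There are no horizontal gaps; going ``above'' (larger $\Im w$) only takes you deeper into $\Omega^0_n$, toward $0$. So your mechanism for producing the complementary disk does not exist.

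The paper finds the free ball by going \emph{down}: the entire $\Omega^0_n$ lies, in Fatou coordinates, above a fixed horizontal line $\Im w=-M_0$ (this is what the constant $M_0$ is for). One places $\zeta_n'$ at real part $\approx\Re\zeta_n$ and imaginary part $\approx -M_0-r_n'$, so that both $B(\zeta_n',r_n')$ and its unit translate (hence the $f_n$-image) sit below $\Im w=-2$ and miss $\Omega^0_n$ automatically. The role of the hypothesis $\Im\zeta_n<D/\alpha_n$ is then purely metric: it guarantees that in the vertical strip of width $\asymp 1/\alpha_n$ the hyperbolic distance from $\zeta_n$ down to $\zeta_n'$ is bounded independently of $n$ (use \eqref{E:hyperbolic-metric}; the point is that $\zeta_n$ is not too close to the ``top'' of the strip). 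The paper does a short case analysis according to whether $\Re\zeta_n$ is near the ends of the strip, choosing $V_n=\p_n$ or an explicit enlarged $V_n'$, but the geometric idea is the same in all cases: the hole is below, toward the $\sigma_{f_n}$ fixed point, not between the sectors.
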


\begin{proof}
Recall that $\zeta_n$ belongs to the set $\cup_{j=1}^{\Bk'}B(j, \delta_1)\cup W_{f_n}$, see 
Equation \eqref{E:W}. 
Since the sectors $\C_n^{-j}\cup (\Csh_n)^{-j}$, for $ j=0,1,2,\dots, k_n$, 
are compactly contained in $\Dom f_n$, $k_n$ is uniformly bounded independent of $n$ by Lemma~\ref{turning}, 
the continuous dependence of the Fatou coordinate on the map, and the compactness of the class $\ff$, 
there exists a constant $\eps_0$ such that $\Phi_n^\dagger$ is defined on $B(W_{f_n}, \eps_0)$ 
with 
\[\Phi_n^\dagger(B(W_{f_n}, \eps_0))\subset \Dom f_n.\] 

By Theorem~\ref{Ino-Shi2}, the map $f_n: \Dom f_n\to \cc$ extends onto a domain $V'$ with the modulus of 
$V'\setminus \Dom f_n$ uniformly bounded from below, independent of $n$. 
By the Koebe distortion Theorem, this implies that $\Dom f_n$, for all $n$, is contained in a fixed ball about $0$. 
Similarly, all of $f_n$, for $n\geq 1$, are univalent on the ball $B(0, 1/12)$.
By the definition of renormalization, this implies that there exists a constant $M_0>2$ such that 
each $\Phi_n^\dagger$, for $n\geq 1$, is univalent above the line $ \Im w= M_0$, and that 
$W_{f_n}$ lies above the line $\Im w =-M_0$. 

Let $U\subset \cc$ be a domain with the hyperbolic metric $\rho |dz|$ of constant curvature equal to $-1$.  
Using the 1/4-Theorem, one can show the following estimate on $\rho$ at a given point $z\in U$
\begin{equation}\label{E:hyperbolic-metric}
\tfrac{1}{2 d(z, \partial U)} \leq \rho(z)\leq \tfrac{2}{d(z, \partial U)}.
\end{equation}

The proof of the lemma is broken into two situations based on the position of $\zeta_n$.
\medskip

{\em The proof when $\Im \zeta_n< M+1$:} 
\medskip

If $\Re \zeta_n\in (7/8, 1/\alpha_n-\Bk-1)$ define 
\[r_n':= \tfrac{1}{2} \min \{\Re \zeta_n, \tfrac{1}{\alpha_n}-\Bk-\Re \zeta_n\},\;\zeta_n':=\Re\zeta_n-M\Bi-r_n' \Bi.\]
Let $V_n:=\p_n$ and $z_n':= \Phi_n^{-1}(\zeta_n')$. 
The ball $B(\zeta_n', r_n')$ and its translation by one are below the heights $-2$ and $-M$. 
Hence, by the the choice of $M$, $\Phi_n^{-1}(B(\zeta_n',r_n'))$ and $f_n(\Phi_n^{-1}(B(\zeta_n',r_n')))$ are 
outside of $\Omega_n^0$.

On the other hand, one can easily verify that the hyperbolic distance within $\Phi_n(V_n)$ from $\zeta_n$ to 
$\zeta_n'$ is uniformly bounded from above, and that $B(\zeta_n', r_n')$ contains a hyperbolic ball of definite 
size independent of $n$.
As $\Phi_n$ is an isometry from $V_n$ to $ \Phi_n(V_n)$, with respect to the hyperbolic metrics, we conclude 
the second part of the lemma. 
The other parts are obvious here.

If $\Re \zeta_n\geq 1/\alpha_n-\Bk-1$, define 
\begin{align*}
V_n'&:=B(W_{f_n}, \eps_0)\cup B(W_{f_n}+1, \eps_0)
\cup \{\zeta\in \cc ; \Re \zeta\in (\tfrac{1}{\alpha_n}-\Bk-2, \tfrac{1}{\alpha_n}-\Bk)\},\\
\zeta_n'&:=-2M \Bi+ \tfrac{1}{\alpha_n}-\Bk-1.
\end{align*}
Let $V_n:= \Phi_n^\dagger(V_n')$ and $z_n':=\Phi_n^\dagger (\zeta_n')$. 
By the contraction of $\Phi_n^\dagger$ with respect to hyperbolic metrics, $d_{\rho_n}(z_n, z_n')$ is 
uniformly bounded from above.  
By the Koebe distortion Theorem $\Phi_n^{-1}(B(\zeta_n',1))$ contains a hyperbolic ball of definite size about
$z_n'$.
The other properties stated in the lemma follow similarly. 

When $\alpha_n$ is bigger than some constant $\eps$, $\Im \zeta_n < D/\eps$ and the problem is 
reduced to the above case. 
Thus, it remains to prove the lemma for small values of $\alpha_n$.
\medskip

{\em The proof when $\alpha_n$ is small:}
\medskip

If $\Re \zeta_n\in (7/8, 1/\alpha_n-\Bk-1)$ and $ \Im \zeta_n>M+1$ define 
\begin{align*}
r_n'&:= \tfrac{1}{4} \min \{\Re \zeta_n, \tfrac{1}{\alpha_n}-\Bk-\Re \zeta_n\},\\
\zeta_n'&:=\Re\zeta_n-M\Bi-r_n' \Bi,\\
V_n'&:=\{\zeta\in \cc ;  |\Re (\zeta - \zeta_n)|<3r_n'\}\cup 
\{\zeta\in \cc ; \Im \zeta>M, |\Re (\zeta- \zeta_n)|< \tfrac{1}{2\alpha_n}\}.  
\end{align*}
Using the estimate in \eqref{E:hyperbolic-metric}, one can verify that the hyperbolic distance within $ V_n'$ 
from $\zeta_n$ to $\zeta_n'$ is uniformly bounded independent of $n$.
Similarly, $B(\zeta_n', r_n')$ contains a hyperbolic ball of definite size independent of $n$ about $\zeta_n'$.  

The map $\Phi_n^{-1}$ extends to a map defined on $V_n'$, using the dynamics of $ f_n$. 
Furthermore, it is univalent for sufficiently small values of $\alpha_n$. 
Now, define $V_n:=\Phi_n^{-1}(V_n')$ and $z_n':=\Phi_n^{-1}(\zeta_n')$. 
The isometry $\Phi_n^{-1}$ preserves the respective distance on $V_n'$ and $V_n$, hence 
$d_{\rho_n}(z_n, z_n')$ is uniformly bounded independent of $n$, 
and $\Phi_n^{-1}(B(\zeta_n',r_n' ))$ contains a hyperbolic ball of definite size about $ z_n'$.
The other properties in the lemma should be clear as well. 

If $\zeta_n\in W_n$ and $\Im \zeta_n\geq M+1$ define 
\begin{align*}
r_n'&:=(\Im \zeta_n-M)/(4D),\\
\zeta_n'&:=(\tfrac{1}{\alpha_n}-\Bk-r_n')-\Bi(M+r_n'),\\
V_n'&:=\{\zeta\in \cc ; \tfrac{1}{2\alpha_n}<\Re \zeta <\tfrac{1}{\alpha_n}-\Bk\} \cup 
\{\zeta\in \cc ; \Im \zeta>M, \tfrac{1}{\alpha_n}-\Bk\leq \Re \zeta < \tfrac{5}{4\alpha_n}.\}  
\end{align*}
Similarly, extend $\Phi_n^\dagger$ to a univalent map on $V_n'$, using the dynamics of $f_n$, and 
let $V_n:=\Phi_n^\dagger(V_n')$ and $z_n':=\Phi_n^\dagger(\zeta_n')$. 
By our choice of  $ M$, $\Phi_n^\dagger$ is an isometry from $V_n'$ to $V_n$ with respect to the hyperbolic metrics.
Hence, one can verify that $d_{\rho_n}(z_n, z_n')$ is uniformly bounded from above independent of $n$, 
and, $\Phi_n^{-1}(B(\zeta_n',r_n' ))$ contains a hyperbolic ball of definite size about $ z_n'$.
\end{proof}

Fix an $n$ with $\Im \zeta_n<D/\alpha_n$. 
We inductively define a sequence of domains $V_n, V_{n-1}, V_{n-2}, \dots, V_0$, and maps 
$g_n, g_{n-1}, \dots, g_1$, where each $V_i=\Omega_i^0\setminus I_{i,r(i)}$, for some 
$r(i)\in [3/2, 1/\alpha_i-\Bk]$, $z_i\in V_i$, and each $g_i: V_i\to V_{i-1}$ is a holomorphic or 
anti-holomorphic map with $g_i(z_i)=z_{i-1}$. 

Let $V_n$ be the domain obtained in Lemma~\ref{L:free-balls}. 
Assuming $V_l$ is defined for some $l$, the map $g_l$ and the domain $V_{l-1}$ are defined according to one 
of the following scenarios.
Since $V_l$ is a simply connected domain which spirals at most $\Bk'$ times about $0$, 
there is an inverse branch of $\ex$ defined on $V_l$, denoted by $\eta_l$, that maps $z_l$ to $\zeta_{l-1}$.  
As $\eta_l(V_l)$ contains $\zeta_l$, by \eqref{E:place-of-zeta}, one of the following must occur:
\medskip
\begin{itemize}
\item[(a)] $\Re \eta_{l}(V_l) \subset (-\Bk'+7/8, \Bk'+1/2)$
\item[(b)] $\Re \eta_{l}(V_l)\subset (1/2, 1/\alpha_l-\Bk-1/2)$
\item[(c)] $\Re \eta_{l}(V_l)\subset (1/\alpha_l-\Bk-1/2, 1/\alpha_l-\Bk+k_n+2\Bk')$
\end{itemize}
\medskip

When (a) occurs, we must have $\zeta_{l-1}\in \cup_{j=1}^{\Bk'} B(j, \delta_1)$, and hence by 
Lemma~\ref{L:extra-space}, 
$\eta_l(V_l)$ contains one of these balls. 
This implies that $\eta_l(V_l)$ avoids $\delta_1$-neighborhoods of the points $0,-1,-2, \dots, -\Bk'+1$. 
Now, we define $g_l:=\Phi_l^{-1}\circ \eta_l$ on $V_l$.    
By definition, $g_l(z_l)=z_{l-1}$.  
By our assumption \eqref{E:k'-condition}, $g_l(V_l)$ avoids the curve $I_{l-1, \Bk'+1}$, and therefore, 
$g_l$ maps $V_l$ into the set $V_{l-1}:=\Omega_{l-1}\setminus I_{l-1, \Bk'+1}$.  
Indeed, by Lemma~\ref{L:extra-space}, 
\begin{equation}\label{E:buffer-when-a}
\Phi_l^{-1}(B(\eta_l(V_l), \delta_1))\subseteq V_{l-1}. 
\end{equation}
\medskip

When (b) occurs, let $g_l:=\Phi_l^{-1}\circ \eta_l$, and 
$V_{l-1}:=\Omega_{l-1}\setminus I_{l-1, \alpha_{l-1}^{-1}-\Bk}$. 
Here also, $g_l$ maps $z_l$ to $z_{l-1}$, and property \eqref{E:buffer-when-a} holds.  
\medskip
  
When (c) occurs, let $g_l:=\Phi_l^\dagger \circ \eta_l$, and 
$V_{l-1}:=\Omega_{l-1}\setminus I_{l-1, \alpha_{l-1}^{-1}-\Bk-1}$. 
Here, we have 
\begin{equation}\label{E:buffer-when-c}
\Phi_l^\dagger(B(\eta_l(V_l), \delta_1))\subseteq V_{l-1}. 
\end{equation}
and $g_l(z_l)= z_{l-1}$, here as well. 
\medskip

The maps $g_i$ enjoy the following properties. 
\begin{lem}\label{L:nice-chain}
There are constants $\delta_4, \delta_5$ such that for every $i=1,2 \dots, n$ 
\begin{itemize}
\item[i.] the map $g_i:(V_i, \rho_i)\to (V_{i-1}, \rho_{i-1})$ is uniformly contracting, that is, for every $z\in V_i$ 
\[ \rho_{i-1}(g_i(z)) |g_i'(z)|\leq \delta_4\rho_i(z);\] 
\item[ii.] the map $g_i:V_i \to V_{i-1}$ is univalent on the hyperbolic ball
\[\{z\in V_i\mid d_{\rho_i}(z, z_i)<\delta_5\};\]
\item[iii.] Each map $g_i$ is either univalent or has a unique simple critical point in $V_i$.
\end{itemize}
\end{lem}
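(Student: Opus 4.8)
The plan is to verify the three properties by unwinding the three-scenario construction above and applying the Schwarz--Pick lemma together with the structure of the extended Fatou coordinates $\Phi_l^{-1}$ and $\Phi_l^\dagger$. First I would set up the common framework: each $g_l$ has the form (an inverse branch of $\ex$ restricted to $V_l$) followed by either $\Phi_l^{-1}$ or $\Phi_l^\dagger$, with the target domain $V_{l-1}=\Omega^0_{l-1}\setminus I_{l-1,r(l-1)}$. The key geometric input is that, in cases (a) and (b), the buffer inclusion \eqref{E:buffer-when-a} holds, and in case (c) the buffer inclusion \eqref{E:buffer-when-c} holds; that is, the image $g_l(V_l)$ is at definite hyperbolic distance (inside $V_{l-1}$) from the part of $\partial V_{l-1}$ coming from the slit $I_{l-1,r(l-1)}$ and, via Lemma~\ref{L:extra-space} and the compactness of the class $\ff$, from $\partial \Dom f_{l-1}$ as well.

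For part~(i): the map $\ex$, as a covering, is a local isometry between the flat cylinder metric and $|dz|/(2\pi|z|)$ on $\cc^*$; but more to the point, I would factor the contraction through the standard fact that a holomorphic map between hyperbolic domains is $1$-Lipschitz (Schwarz--Pick), so $g_l:(V_l,\rho_l)\to(V_{l-1},\rho_{l-1})$ is already non-expanding, and I upgrade this to \emph{uniform} contraction by a factor $\delta_4<1$ using the buffer: since $\eta_l(V_l)$, hence $g_l(V_l)$, lies in a subdomain of $V_{l-1}$ whose inclusion has modulus bounded below independently of $l$ (this is where \eqref{E:buffer-when-a}, \eqref{E:buffer-when-c}, \eqref{E:hyperbolic-metric}, \eqref{E:k'-spiral}, and the uniform bound on $k_n$ from Lemma~\ref{turning} all enter), the composition of the inclusion $g_l(V_l)\hookrightarrow V_{l-1}$ with $g_l$ strictly contracts the $\rho_l$-metric by a definite factor. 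The anti-holomorphic case is identical after conjugating by $z\mapsto\bar z$, which is a hyperbolic isometry.

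For parts~(ii) and~(iii): the only source of non-injectivity in $g_l$ is a critical point of the extended coordinate. The inverse branch $\eta_l$ of $\ex$ is univalent on $V_l$ by construction (since $V_l$ spirals at most $\Bk'$ times about $0$, by Lemma~\ref{L:free-balls}(iii) and property~(iii) in the scenario list). So the critical points of $g_l$ are exactly the preimages under $\eta_l$ of the critical points of $\Phi_l^{-1}$ (none — it is univalent on its domain, which in cases (a),(b) contains $\eta_l(V_l)$) or of $\Phi_l^\dagger$; the latter has critical points mapped to $\cv_{f_l}, f_l(\cv_{f_l}),\dots$, and in the relevant range $\Re w\in(1/\alpha_l-\Bk-1/2,\,1/\alpha_l-\Bk+k_n+2\Bk')$ of case (c), exactly one such critical point can occur in the strip, of the simplest type (order two), by the branched-covering structure in the definition of renormalization. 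This gives~(iii). For~(ii), the critical point of $g_l$, when present, lies at a definite hyperbolic distance $\delta_5$ from $z_l$: this is because $z_l=\Phi_l^\dagger(\zeta_l)$ (or $\Phi_l^{-1}(\zeta_l)$) with $\zeta_l$ confined by \eqref{E:place-of-zeta} to a region that stays boundedly far, in the hyperbolic metric of the relevant strip, from the critical values of the coordinate — and $\Phi_l^{-1}$, $\Phi_l^\dagger$ are isometries (resp. contractions) for these metrics. Then a ball of radius $\delta_5$ about $z_l$ avoids the critical point and $g_l$ is univalent there.

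The main obstacle I expect is part~(i) — specifically, producing the \emph{uniform} contraction factor $\delta_4<1$, because it requires the modulus of $V_{l-1}\setminus g_l(V_l)$ (equivalently, the hyperbolic displacement of $g_l(V_l)$ from the dangerous part of $\partial V_{l-1}$) to be bounded below by a constant independent of $l$ and of the arithmetic of $\alpha$. The delicate point is the case-(c) estimate, where $\zeta_l$ may have real part as large as $1/\alpha_l-\Bk+k_n+2\Bk'$, i.e. near the right edge of the strip $\mathcal{W}_{f_l}$: one must check that the extension of $\Phi_l^\dagger$ beyond $1/\alpha_l-\Bk$ via the dynamics of $f_l$ still leaves a definite collar inside $\Omega^0_{l-1}$, which is exactly the content of \eqref{E:k'-condition}, \eqref{E:buffer-when-c}, and Lemma~\ref{L:extra-space}. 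Assembling these uniform bounds — rather than any single estimate — is the heart of the argument.
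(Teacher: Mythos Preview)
Your overall strategy matches the paper's: factor $g_i$ through the lift $\eta_i$ and the (extended) inverse Fatou coordinate, use Schwarz--Pick plus a definite buffer for the contraction in (i), and locate the critical locus relative to the basepoint for (ii)--(iii). Two points are worth flagging.

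First, for part~(i) the paper makes the contraction explicit not at the level of $g_l(V_l)\hookrightarrow V_{l-1}$ but one step earlier, at the inclusion $\eta_i(V_i)\hookrightarrow B_{\delta_1}(\eta_i(V_i))$. The advantage is that $\eta_i(V_i)$ has $\Re$-diameter bounded by $\Bk'$, so one can write down an explicit holomorphic self-map $H(\zeta)=\zeta+(\zeta-\zeta_0)\,\delta_1/(\zeta-\zeta_0+2\Bk'+1)$ of $B_{\delta_1}(\eta_i(V_i))$ sending $\eta_i(V_i)$ into itself with $H'(\zeta_0)=1+\delta_1/(2\Bk'+1)$; Schwarz--Pick applied to $H$ then gives the exact factor $\delta_4=(2\Bk'+1)/(2\Bk'+1+\delta_1)$. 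Your ``modulus of $V_{l-1}\setminus g_l(V_l)$ bounded below'' is the right intuition but is harder to make precise, since neither set is bounded and the complement is not an annulus; pushing the argument to the $\eta$-level avoids this.

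Second, your reasoning for part~(iii) has a gap. You assert that ``in the relevant range $\Re w\in(1/\alpha_l-\Bk-1/2,\,1/\alpha_l-\Bk+k_n+2\Bk')$ \dots\ exactly one such critical point can occur in the strip.'' But that strip has width $k_n+2\Bk'+\tfrac12$, and the critical points of $\Phi_{l-1}^\dagger$ are spaced by~$1$ (by equivariance), so the strip can contain many of them. The correct argument is different and uses precisely the univalence of $\eta_l$ that you already recorded: since $\eta_l$ is an inverse branch of the $\mathbb{Z}$-periodic covering $\ex$, the set $\eta_l(V_l)$ cannot contain two points differing by a nonzero integer; as the critical points of $\Phi_{l-1}^\dagger$ are integer-translates of one another, at most one of them lies in $\eta_l(V_l)$. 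This is the step you are missing. (For part~(ii) the paper works on the codomain side, showing the critical \emph{values} lie in the sets $\Phi_{l-1}^{-1}(B(j,\delta_1))$ while $z_{l-1}$ does not in case~(c); your domain-side variant is equivalent once you sort out the $l$ versus $l-1$ indices.)
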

\begin{proof}
{\em Part i.}
By the definition of $g_i$, and Equations \eqref{E:buffer-when-a} and \ref{E:buffer-when-c}, each $g_i$ can 
be decomposed as 
\begin{displaymath}
\xymatrix{
(V_i,\rho_i) \ar[r]^-{\eta_i} &\; (\eta_i(V_i), \tilde{\rho}_i) \ar@{^{(}->}[r]^-{inc.} &\;  
(B_{\delta_1}(\eta_i(V_i)),\hat{\rho}_i)\;\; \ar[r]^-{\Phi_l^{-1} \text {or } \Phi_i^\dagger}\; & \;(V_{i-1},\rho_{i-1}),}
\end{displaymath}
where $\tilde{\rho}_i(z)|d z|$ and $\hat{\rho}_i(z)|dz|$ denote the hyperbolic metric of constant $-1$ curvature on 
$\eta_i(V_i)$ and $B_{\delta_1}(\eta_i(V_i))$, respectively. 

By the Schwartz-Pick Lemma, the first map and the last map in the above decomposition are non-expanding, so it is 
enough to show that the inclusion map is uniformly contracting.
To this end, fix an arbitrary point 
$\zeta_0$ in $\eta_i(V_i)$, and define 
\[H(\zeta):=\zeta +(\zeta-\zeta_0)\frac{\delta_1}{(\zeta-\zeta_0+2\Bk'+1)}: \eta_i(V_i)\ra \cc.\]
As $\diam \Re(\eta_i(V_i))\leq \Bk'$, for every $\zeta \in \eta_i(V_i)$ we have $|\Re(\zeta-\zeta_0))|\leq \Bk'$ . 
Consequently, $|\zeta-\zeta_0|<|\zeta-\zeta_0+2\Bk'+1|$, and then 
\begin{align*}
|H(\zeta)-\zeta|&= \delta_1 |\frac{\zeta-\zeta_0}{\zeta-\zeta_0+2\Bk'+1}| < \delta_1.
\end{align*}
This implies that, $H(\zeta)$ is a holomorphic map from $\eta_i(V_i)$ into  $B_{\delta_3}(\eta_i(V_i))$. 
By Schwartz-Pick Lemma, $H$ is non-expanding. 
In particular, at $H(\zeta_0)=\zeta_0$ we have
\[\tilde{\rho}_i(\zeta_0) |H'(\zeta_0)|= \tilde{\rho}_i(\zeta_0)(1+\frac{\delta_1}{2\Bk'+1}) \leq \hat{\rho}_i(\zeta_0).\]
That is, $\tilde{\rho}_i(\zeta_0)\leq \delta_4\cdot \hat{\rho}_i(\zeta_0)$
with $\delta_4=(2\Bk'+1)/(2\Bk'+1+\delta_1)$.

\medskip
{\em Part ii.}
In cases (a) and (b) of the definition of the map $g_i$ both $\eta_i$ and $\Phi_i^{-1}$ are univalent, hence $g_i$ 
is univalent on the whole domain $V_i$. 

In case (c) of the definition of the map $g_i$, $\eta_i$ is univalent, and $\Phi_{i-1}^\dagger$ has $\Bk'$ simple 
critical points that are mapped to 
\[f_{i-1}(\cp), f_{i-1}\co{2}(\cp), \dots, f_{i-1}\co{\Bk'}(\cp).\]
However, in this case
\[z_{i-1}\notin \Phi_{i-1}^{-1}(B(j,\delta_1)), \text{ for } j=1,2,\dots, \Bk'.\] 
The curve, $\Phi_{i-1}^{-1}(-2 \Bi+t)$, for $t\in [0,\Bk']$, belongs to the boundary of $V_{i-1}$.
By the continues dependence of the Fatou coordinate on the map, there exists $\delta_5>0$ such that 
\[B_{\rho_{i-1}}(f_{i-1}\co{j}(\cp),\delta_5)\subseteq \Phi_{i-1}^{-1}(B(j, \delta_1)), \text{ for } j=1,2,\dots, \Bk'.\]
Hence, combining with the above argument, the branched covering $\Phi_{i-1}^\dagger$ has no critical 
value in $B_{\rho_{i-1}}(z_{i-1},\delta_5)$.
Let $W$ be a connected component of $g_i^{-1}(B_{\rho_{i-1}}(z_{i-1}, \delta_5))$ containing $z_i$.
As $B_{\rho_{i-1}}(z_{i-1}, \delta_5)$ is simply connected, $g_i$ is univalent on $W$. 
In particular, by the contraction proved in Part i, $g_i$ is univalent on $B_{\rho_i}(z_i, \delta_5)\subseteq W$.  
\medskip

{\em Part iii.}
In both cases (a) and (b), the map $g_i$ is a composition of the univalent map $\eta_i$ and an inverse branch of
$\Phi_i$ on $\eta_i(V_i)$. 
Hence, it is univalent. 

In case (c), the lift $\eta_i(V_i)$ cannot contain any two points apart by an integer value. 
On the other hand, by the equivariance property of the Fatou coordinates, $\Phi_{i-1}^\dagger$ has 
$\Bk'$ number of simple critical points apart by one. 
Therefore, no two of these critical points can belong to $\eta_i(V_i)$. 
\end{proof}

The composition
\[G_n:= g_1\circ \dots \circ g_{n-1}\circ g_n:V_n \to V_0,\]
maps $z_n$ to $z_0$.
\begin{lem}\label{safe-jump} 
There are constants $D_1$ and $\delta_6\in (0,1)$ such that for every $n$ with 
$\Im \zeta_n< D/\alpha_n$, there is $r_n\in(0,1)$ such that 
\medskip
 
\begin{itemize}
  \setlength{\itemsep}{.8em}  
\item[i.] $G_n(B_{\rho_n}(z_n', \delta_3)) \cap \pc(f_0)=\varnothing;$
\item[ii.] $B(G_n(z_n'), r_n)\subset G_n(B_{\rho_n}(z_n', \delta_3))$, and  
$|G_n(z_n')- z_0| \leq D_3\cdot r_n;$
\item[iii.] $r_n\to 0$, as $n\to \infty$.
\end{itemize}
\end{lem}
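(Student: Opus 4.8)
The plan is to establish the three items in turn. Item (i) carries essentially all the work, whereas (ii) and (iii) follow from the contraction and univalence already recorded in Lemma~\ref{L:nice-chain}, together with the Koebe distortion and $1/4$ theorems. For (i) I first reduce to a statement about the sets $\Omega^j_l$: by Proposition~\ref{neighbor}-ii one has $\pc(f_0)\subset\Omega^n_0$, so it suffices to prove $G_n(B_{\rho_n}(z_n',\delta_3))\cap\Omega^n_0=\varnothing$. Write $B^{(l)}:=g_{l+1}\circ\cdots\circ g_n(B_{\rho_n}(z_n',\delta_3))\subset V_l$ for $l=n,n-1,\dots,0$, so that $B^{(n)}$ is the free ball of Lemma~\ref{L:free-balls} and $B^{(0)}=G_n(B_{\rho_n}(z_n',\delta_3))$; the target is $B^{(0)}\cap\Omega^n_0=\varnothing$. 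Here one uses the nesting $\Omega^{\,n-l}_l\subset\Omega^0_l$ from Proposition~\ref{neighbor}-i, and that the object ``born at level $n$'' is $\Omega^{\,n-l}_l$ on the $f_l$-plane, so that going down one level it becomes $\Omega^{\,n-l+1}_{l-1}$.

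The heart of the proof is a downward induction on $l$ carrying the invariant that \emph{both} $B^{(l)}$ and $f_l(B^{(l)})$ are disjoint from $\Omega^{\,n-l}_l$. For $l=n$ this is precisely Lemma~\ref{L:free-balls}-ii (noting $\Omega^{\,0}_n=\Omega^0_n$). For the step from $l$ to $l-1$, recall that $g_l$ is an inverse branch $\eta_l$ of $\ex$ followed by a branch of the (possibly extended) Fatou coordinate of $f_{l-1}$; since both $\Phi_{l-1}^{-1}$ and $\Phi_{l-1}^\dagger$ intertwine translation by $1$ with $f_{l-1}$ (the Abel equation) and $\ex$ collapses translation by $1$, Lemmas~\ref{renorm} and~\ref{conjugacy} imply that any point of $V_l$ whose $g_l$-image lies in $\Omega^{\,n-l+1}_{l-1}$ must itself lie in $\Omega^{\,n-l}_l$ or in one $f_l$-translate of it; the count of iterates of $f_{l-1}$ corresponding to a single iterate of $f_l$ (Lemma~\ref{conjugacy}) is what limits the ambiguity to this one translate, and disjointness of $B^{(l)}$ from it is exactly the second clause $f_l(B^{(l)})\cap\Omega^{\,n-l}_l=\varnothing$ of the hypothesis. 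Hence $B^{(l-1)}=g_l(B^{(l)})$ is disjoint from $\Omega^{\,n-l+1}_{l-1}$; the $\delta_1$-collars \eqref{E:buffer-when-a} and \eqref{E:buffer-when-c}, together with the position \eqref{E:place-of-zeta} of $\zeta_l$ (which forces one of the cases (a)--(c) and pins $\Re\eta_l(V_l)$), upgrade this to disjointness with a $\delta_1$-margin, which also gives it for $f_{l-1}(B^{(l-1)})$. This closes the induction; at $l=0$ it yields $B^{(0)}\cap\Omega^n_0=\varnothing$, hence (i).

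For (ii), iterating the metric estimate Lemma~\ref{L:nice-chain}-i along the chain gives $\rho_0(G_n(z))\,|G_n'(z)|\le\delta_4^{\,n}\rho_n(z)$, so each $g_i$ contracts $\rho_i$-distances by a factor $\le\delta_4$ and every $B^{(l)}$ stays within hyperbolic distance $\delta_2+\delta_3$ of $z_l$; after decreasing $\delta_3$ if necessary so that $2\delta_2+\delta_3\le\delta_5$, Lemma~\ref{L:nice-chain}-ii makes each $g_i$ univalent on $B_{\rho_i}(z_i,\delta_5)\supset B^{(i)}$, hence $G_n$ is univalent on $B_{\rho_n}(z_n',\delta_3)$ and even on $B_{\rho_n}(z_n',\delta_2+\delta_3)\ni z_n$. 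Since all $\Dom f_l$, and hence all $V_l$, lie in one fixed ball (Koebe applied to the extensions of Theorem~\ref{Ino-Shi2}, as in the proof of Lemma~\ref{L:free-balls}), estimate \eqref{E:hyperbolic-metric} makes the Euclidean radius of $B_{\rho_n}(z_n',\delta_3)$ comparable, with universal constants, to $1/\rho_n(z_n')$. Let $r_n$ be the Euclidean inradius of $G_n(B_{\rho_n}(z_n',\delta_3))$ at $G_n(z_n')$; the Koebe $1/4$ theorem gives $B(G_n(z_n'),r_n)\subset G_n(B_{\rho_n}(z_n',\delta_3))$ and $r_n\gtrsim|G_n'(z_n')|/\rho_n(z_n')$, while Koebe distortion on the univalent map $G_n$ yields $|G_n(z_n')-z_0|=|G_n(z_n')-G_n(z_n)|\le D_3\,|G_n'(z_n')|\,|z_n'-z_n|\lesssim|G_n'(z_n')|/\rho_n(z_n')\lesssim r_n$, which is (ii).

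For (iii), writing the Euclidean derivative $|G_n'(z_n')|$ as the conformal factor of $G_n$ between the hyperbolic metrics times $\rho_n(z_n')/\rho_0(G_n(z_n'))$ and using the iterated estimate above gives $r_n\asymp|G_n'(z_n')|/\rho_n(z_n')\le\delta_4^{\,n}/\rho_0(G_n(z_n'))$; since $V_0$ is bounded, $\rho_0\ge c_0>0$ on $V_0$, so $r_n\le D_1\,\delta_6^{\,n}$ with $\delta_6:=\delta_4=(2\Bk'+1)/(2\Bk'+1+\delta_1)<1$, giving $r_n\to0$ (and $r_n\in(0,1)$ after enlarging $D_1$ to absorb finitely many small $n$). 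The step I expect to be the main obstacle is the propagation in the second paragraph: turning ``$\Omega^{\,n-l}_l$-free, with $f_l$-image also $\Omega^{\,n-l}_l$-free'' into the same statement at level $l-1$, i.e.\ checking via Lemma~\ref{conjugacy} and the case analysis (a)--(c) that no third $f_l$-translate of $\Omega^{\,n-l}_l$ can intrude after a single descent; the rest is bookkeeping with Koebe and Schwarz--Pick already arranged in Lemmas~\ref{L:free-balls} and~\ref{L:nice-chain}.
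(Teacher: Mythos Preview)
Your route to (i) differs from the paper's. You run a level-by-level induction through the tower, carrying the invariant that both $B^{(l)}$ and $f_l(B^{(l)})$ avoid $\Omega_l^{\,n-l}$, aiming for the stronger conclusion $G_n(B)\cap\Omega_0^n=\varnothing$ (the paper's footnote attributes this to the companion paper). The paper instead gives a short direct argument: it observes that $G_n=f_0^{\circ l}\circ\Psi_n$ on the ball for some $l\le q_n$, and then, assuming $G_n(B)\cap\pc(f_0)\neq\varnothing$, applies $f_0^{\circ(q_n-l)}$, uses forward invariance of $\pc(f_0)$, Lemma~\ref{conjugacy}, and $\Psi_n(\pc(f_n)\cap\p_n)=\pc(f_0)\cap\Psi_n(\p_n)$ to deduce $f_n(B)\cap\pc(f_n)\neq\varnothing$, contradicting Lemma~\ref{L:free-balls}-ii. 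Your inductive step is not fully justified as written: the sentence claiming that the $\delta_1$-collar ``also gives it for $f_{l-1}(B^{(l-1)})$'' does not hold, since $f_{l-1}$ moves points by much more than $\delta_1$; what you actually need is to track how the \emph{pair} $(B^{(l)},f_l(B^{(l)}))$ descends to $(B^{(l-1)},f_{l-1}(B^{(l-1)}))$ through the Abel relation and the case split (a)--(c), and this is where the work lies.

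Part (ii) has a concrete gap. You write ``after decreasing $\delta_3$ if necessary so that $2\delta_2+\delta_3\le\delta_5$'', but $\delta_2$ is the fixed bound on $d_{\rho_n}(z_n,z_n')$ from Lemma~\ref{L:free-balls} and $\delta_5$ is fixed in Lemma~\ref{L:nice-chain}-ii; nothing in their proofs gives $\delta_2<\delta_5$, so shrinking $\delta_3$ alone need not place $B_{\rho_n}(z_n',\delta_3)$ inside $B_{\rho_n}(z_n,\delta_5)$, and your claim that $G_n$ is univalent on the whole ball is unsupported. The paper repairs this by writing $G_n=G_n^u\circ G_n^l$, where $G_n^l$ consists of the first $m+1$ factors with $m$ the least integer satisfying $(\delta_2+\delta_3)\delta_4^{\,m}\le\delta_5$: then $G_n^l$ contracts $B_{\rho_n}(z_n',\delta_3)\cup\gamma_n$ into $B_{\rho_{n-m}}(z_{n-m},\delta_4\delta_5)$, and since $m$ is a fixed integer and each $g_i$ lies in a compact family (a lift $\eta_i$ composed with $\Phi_i^{-1}$ or $\Phi_i^\dagger$, with the $\delta_1$-buffer of \eqref{E:buffer-when-a}--\eqref{E:buffer-when-c}), $G_n^l$ has uniformly bounded distortion on this set; the tail $G_n^u$ is then univalent on $B_{\rho_{n-m-1}}(z_{n-m-1},\delta_5)$ by Lemma~\ref{L:nice-chain}-ii, and Koebe applies. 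Your part (iii) is essentially the paper's argument.
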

\begin{proof}
{\em Part i.}
First note that the conformal change of coordinate $\Psi_n$ preserves the post-critical sets. That is, 
\begin{equation*}\label{E:pc-invariance}
\Psi_n(\pc(f_n)\cap \p_n)=\pc(f_0)\cap \Psi_n(\p_n).
\end{equation*}
The next step is to see that there exists an integer $l\leq q_n$ such that 
\begin{equation*}\label{E:lift-then-iterate}
G_n(B_{\rho_n}(z_n', \delta_3))=f_0\co{l}(\Psi_n(B_{\rho_n}(z_n', \delta_3))).
\end{equation*}
The existence of such an $l$ follows from the definition of the renormalization. 
It is less than $q_n$ since when defining each $g_i$, $i=n-1, n-2, \dots 1$, the map $f_i$ in iterated 
less than $1/\alpha_i$.

Now, 
\begin{align*}
&  && \;\; \qquad G_n(B_{\rho_n}(z_n', \delta_3)) \cap \pc(f_0)\neq \varnothing \\
&(\text{As } f_0(\pc(f_0))= \pc(f_0))  
&& \Rightarrow \quad  f_0\co{q_n-l}\circ G_n(B_{\rho_n}(z_n', \delta_3))\cap \pc(f_0)\neq \varnothing.\\
&
&& \Rightarrow \quad f_0\co{q_n}\circ \Psi_n(B_{\rho_n}(z_n', \delta_3))\cap \pc(f_0)\neq \varnothing \\
&(\text{By Lemma~\ref{conjugacy}})  
&& \Rightarrow \quad \Psi_n\circ f_n (B_{\rho_n}(z_n', \delta_3))\cap \pc(f_0)\neq \varnothing \\
&
&& \Rightarrow \quad f_n (B_{\rho_n}(z_n', \delta_3))\cap \pc(f_n)\neq \varnothing. 
\end{align*}
The last line contradicts Lemma~\ref{L:free-balls}-ii, since Proposition~\ref{neighbor} implies that 
$\pc(f_n)$ is contained in $\Omega_n^0$.
\footnote{Indeed, one can show that $G_n(B_{\rho_n}(z_n', \delta_3)) \cap \Omega_0^n=\varnothing$, 
see \cite{Ch10-I}-Lemma 4.7.}
\medskip

{\em Part ii.}
Let $m$ be the smallest positive integer with $(\delta_3+\delta_2)\cdot (\delta_4)^m\leq \delta_5$, 
and then decompose the map $G_n$ into its first $m+1$ iterates and the rest as 
\[G_n:=G_n^u\circ G_n^{l}, \text{ where } 
G_n^l:= g_{n-m}\circ \cdots \circ g_{n-1}\circ g_n, \text{ and }
G_n^u:=g_1\circ g_2\circ \cdots \circ g_{n-m-1}.\]

Let $\gamma_n$ denote the hyperbolic geodesic in $V_n$ connecting $z_n$ to $z_n'$. 
By the above choice of $m$ and Lemma~\ref{L:nice-chain}-i, we have 
\[G_n^l(B_{\rho_n}(z_n', \delta_3)\cup \gamma_n)\subset B_{\rho_{n-m}}(z_{n-m}, \delta_4\delta_5).\] 
Since each $g_i$, for $i=n-m-1, n-m-2, \dots, 1$, is contacting and univalent on the ball $B_{\rho_i}(z_i, \delta_5)$, 
$G_n^u$ is univalent on $ B_{\rho_{n-m-1}}(z_{n-m-1}, \delta_5)$.  
By the Koebe distortion Theorem, $G_n^u$ has uniformly bounded, independent of $n$, distortion on  
$G_n^l(B_{\rho_n}(z_n', \delta_3)\cup \gamma_n)$. 

On the other hand, $G_n^l$ is a uniformly bounded number of iterates of some $g_i$ that belong to a compact 
class of maps in the compact open topology. 
More precisely, by definition, each $g_i$ is a composition of $\eta_i$ and either of $\Phi_i^{-1}$ or $\Phi_i^\dagger$.
The map $\eta_i$ is univalent and, by Koebe's Theorem, has bounded distortion on hyperbolic sets of bounded diameter, 
and each of the other maps has extension over a larger domain by Equation \eqref{E:buffer-when-a}.    

Since near $z_0$ the hyperbolic and the Euclidean metrics are equivalent, 
combining the above arguments, one concludes that $G_n(B_{\rho_n}(z'_n, \delta_3))$ contains an round ball of 
Euclidean size proportional to its Euclidean diameter, and that the Euclidean diameter of  
$G_n(B_{\rho_n}(z'_n, \delta_3))$ is proportional to $|G_n(z_n')-z_0|$. 
\medskip

{\em Part iii.}
The hyperbolic diameter of $G_n(B_{\rho_n}(z'_n, \delta_3))$ tends to zero exponentially fast, by 
Lemma~\ref{L:nice-chain}-i. 
Hence, by equivalence of the hyperbolic and Euclidean metrics at local scales, $r_n$ must tend to zero.
\end{proof}

\begin{proof}[Proof  of  Proposition~\ref{prop:non-uniformly-porous}] 
Given $z_0\in \Lambda$,  we have an increasing sequence $n_i$ with $\Im \zeta_{n_i}< D/\alpha_{n_i}$.
Lemma~\ref{L:free-balls} provides us with the balls $B_{\rho_{n_i}}(z_n',\delta_3)$ in the complement of 
 $\pc(f_{n_i})$. 
The maps $\G_{n_i}$, by Lemma~\ref{safe-jump}, carry these balls to domains in the complement of $\pc(f_0)$ 
containing round balls $B(G_n(z_{n_i}'),r_{n_i})$ at arbitrarily small scales near $z_0$. 
Hence, $z_0$ can not be a Lebesgue density point of $\Lambda$. 
\end{proof}
\subsection{Non-uniform porosity of decorations} 
For Brjuno values of $\alf$, let $\Delta_\alf$ denote the Siegel disk of $P_\alf$, and $\overline{\Delta}_\alf$ 
denote its closure. 
Here we show that $\pc(P_\alf)$ is non uniformly porous at every point in $\pc(P_\alf)\setminus\overline{\Delta}_\alf$. 

\begin{lem}\label{decorations}
We have $\pc(P_\alf) \setminus \overline{\Delta}_\alf\subset \Lambda$.
\end{lem}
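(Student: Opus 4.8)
The plan is to reduce the inclusion to a statement about $\Gamma$. Since $\pc(P_\alf)\subseteq\bigcap_n\Omega^n_0$ by Proposition~\ref{neighbor}-ii and $\pc(P_\alf)\subseteq J(P_\alf)$ — the latter because, by \cite{Ma83}, the critical orbit is bounded, recurrent, and accumulates on $\partial\Delta_\alf$, hence enters neither the basin of infinity nor $\Delta_\alf$ or its preimages (a tail of the orbit inside $\Delta_\alf$ would lie on an invariant circle and could not accumulate on $\partial\Delta_\alf$), so it lies in the closed set $J(P_\alf)$ — the set $\bigcap_n\Omega^n_0\cap J(P_\alf)$ is the disjoint union $\Gamma\cup\Lambda$. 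Thus it suffices to show that $z_0\in\Gamma$ forces $z_0\in\overline{\Delta}_\alf$: then every $z_0\in\pc(P_\alf)\setminus\overline{\Delta}_\alf$ (which is nonzero and lies in $\Gamma\cup\Lambda$) must lie in $\Lambda$.

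So let $z_0\in\Gamma$, with $\Im\zeta_m\geq D/\alf_m$ for all $m\geq N$, where $(z_m,\zeta_m)$ is the sequence of \S\ref{S:heights} associated to $z_0$. The heart of the matter is that $\Im\zeta_m$ then grows super-exponentially. Since $\Im\zeta_m\geq D/\alf_m>\delta_1$ one has $z_m=\Phi_m^\dagger(\zeta_m)$ and $\Im\zeta_{m-1}=\Im\chi_m(\zeta_m)$ with $\chi_m:=\chi_{f_m}$; integrating the bound of Proposition~\ref{main-estimate} (with $r=1/2$), exactly as in the proof of Lemma~\ref{recursive-heights}, along a path in $\Dom\chi_m\cap\Theta(1/2,\alf_m)$ joining $\zeta_m$ to a point of height $D/\alf_m$ — where $\chi_m$ is controlled uniformly by the compactness of $\fff$ and Lemma~\ref{turning} — one obtains a universal constant $A$ with
\[
\Im\zeta_{m-1}=\alf_m\,\Im\zeta_m+O(1),\qquad\text{so}\qquad\Im\zeta_m\geq\frac{\Im\zeta_{m-1}-A}{\alf_m}\qquad(m>N).
\]
Setting $v_m:=\Im\zeta_m-2A$ and using $\alf_m\leq 1/N$, one checks $v_m\geq v_{m-1}/\alf_m$ for $m>N$, hence $v_m\geq v_N\prod_{j=N+1}^m\alf_j^{-1}$; since $\prod_{j=0}^k\alf_j=|q_k\alf-p_k|\in\bigl(\tfrac{1}{2q_{k+1}},\tfrac{1}{q_{k+1}}\bigr)$ and $v_N\geq DN-2A>0$ for $N$ large, this gives $\Im\zeta_m\geq c_N\,q_{m+1}$ for all $m\geq N$, with $c_N:=(DN-2A)/(2q_{N+1})>0$.

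Consequently $z_{m+1}=\ex(\zeta_m)$ has modulus $\tfrac{4}{27}e^{-2\pi\Im\zeta_m}\leq\tfrac{4}{27}e^{-2\pi c_Nq_{m+1}}$, whereas, by the estimate of Yoccoz recalled in the proof of Lemma~\ref{bounded-height}, the Siegel disk of $f_{m+1}$ contains $B(0,e^{-\mathcal{B}(\alf_{m+1})+M})$. Since $\alf$ is Brjuno, expressing $\mathcal{B}(\alf_{m+1})$ through the denominators of $\alf$ gives $\mathcal{B}(\alf_{m+1})=o(q_{m+1})$; hence $\tfrac{4}{27}e^{-2\pi c_Nq_{m+1}}<e^{-\mathcal{B}(\alf_{m+1})+M}$ for all large $m$, i.e.\ $z_{m+1}\in\Delta_{f_{m+1}}$. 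Fixing such an $m$ and climbing back up the tower through the changes of coordinate of \S\ref{S:heights} relating $z_{j+1}$ to $z_j$, for $j=m,m-1,\dots,0$ — each a local conformal or anti-conformal conjugacy between $f_{j+1}$ near $0$ and a return map of $f_j$, hence carrying $\overline{\Delta}_{f_{j+1}}$ into $\overline{\Delta}_{f_j}$ (part of the renormalization construction; cf.\ \cite{Ch10-I}) — one concludes $z_0\in\overline{\Delta}_{f_0}=\overline{\Delta}_\alf$, as required.

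I expect the main obstacle to be the super-exponential growth of $\Im\zeta_m$: one must verify that the path of integration stays in the region $\Dom\chi_m\cap\Theta(1/2,\alf_m)$ where Proposition~\ref{main-estimate} is valid — using the equivariance of $\chi_m$ to deal with the case $\Re\zeta_m\geq 1/\alf_m-\Bk$ — and that the accumulated additive error is a genuine constant, independent both of $m$ and of the orbit $(z_m,\zeta_m)$; once the recursion is established, the bootstrap to $\Im\zeta_m\gtrsim q_{m+1}$ is elementary. The remaining point needing care is the last one, namely that the inter-level changes of coordinate carry closed Siegel disks into closed Siegel disks, which is implicit in the Inou--Shishikura construction and treated in \cite{Ch10-I}.
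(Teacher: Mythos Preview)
Your reduction to showing $\Gamma\subset\overline{\Delta}_\alf$ is fine, but the recursion you derive for $\Im\zeta_m$ is wrong, and the error is exactly what makes your super-exponential bootstrap collapse. You integrate $\chi_m'$ from $\zeta_m$ down to a basepoint $\zeta^*$ with $\Im\zeta^*=D/\alf_m$, and claim that $\chi_m(\zeta^*)$ is uniformly bounded ``by the compactness of $\fff$''. But $\chi_m=\mathbb{L}og\circ\Phi_m^\dagger$, and at height $D/\alf_m$ the point $z^*=\Phi_m^\dagger(\zeta^*)$ has modulus comparable to $|\sigma_{f_m}|e^{-2\pi D}\asymp\alf_m$ (see the computation of $\tau_h$ in \S\ref{S:perturbed-coordinate}). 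Hence $\Im\chi_m(\zeta^*)\approx\tfrac{1}{2\pi}\log(1/\alf_m)$, which is \emph{not} $O(1)$. The correct recursion is
\[
\Im\zeta_{m-1}=\alf_m\,\Im\zeta_m+\tfrac{1}{2\pi}\log\tfrac{1}{\alf_m}+O(1),
\]
and this is precisely the recursion satisfied by the Brjuno function: $\mathcal{B}(\alf_m)=\log(1/\alf_m)+\alf_m\mathcal{B}(\alf_{m+1})$. In particular, a point on $\partial\Delta_\alf$ typically has $\Im\zeta_m\approx\mathcal{B}(\alf_{m+1})/2\pi=o(q_{m+1})$, so your claimed bound $\Im\zeta_m\geq c_Nq_{m+1}$ is simply false for points of $\Gamma$ lying on the Siegel boundary. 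The bootstrap $v_m\geq v_{m-1}/\alf_m$ with $v_m=\Im\zeta_m-2A$ does not survive the extra logarithmic term.

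The paper's argument avoids this trap by never trying to control $\Im\zeta_m$ absolutely. Instead it works with the \emph{relative} quantity $d(\zeta_j,\Phi_j(\partial\Delta_j))$ --- essentially $\Im\zeta_j-\mathcal{B}(\alf_{j+1})/2\pi$ --- for which the logarithmic terms cancel and one genuinely gets a recursion of the form $d_{j-1}\leq\alf_j d_j+A$ (this is Lemma~\ref{recursive-heights}). Lemma~\ref{bounded-height} then bounds these distances by $4A$, and the uniform hyperbolic contraction of the maps $g_i$ (Lemma~\ref{L:nice-chain}-i) forces all the distances to vanish, giving $z_0\in\overline{\Delta}_\alf$. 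If you want to salvage your approach, you should subtract off $\mathcal{B}(\alf_{m+1})/2\pi$ before running the recursion --- at which point you are essentially rediscovering the paper's proof.
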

\begin{proof}
Let $z\in  \pc(P_\alf) \setminus \overline{\Delta}_\alf$ and assume on the contrary that there is 
an integer $N$ with $\Im \zeta_m\geq D/\alf_m$, for all $m\geq N$. 
 
By the first argument in the proof of Lemma~\ref{bounded-height}, we have 
\[d(\zeta_j, \Phi_j(\partial \Delta_j))\leq \mathcal{B}(\alf_{j+1})/2\pi,\] 
for all $j$.
The boundary of the Siegel disk of a map is sent to the boundary of the Siegel disk of its 
renormalization under the change of coordinates. 
Now it follows from Lemmas~\ref{recursive-heights} and \ref{bounded-height} that for all $j\geq N$ 
we have $d(\zeta_j, \Phi_j(\partial \Delta_j))\leq 4A$. 
By a contraction argument for the changes of coordinates from a level of renormalization to the previous 
level with respect to appropriate hyperbolic metrics, see Lemma~\ref{L:nice-chain}-i, 
one can see that all these distances must be zero. 
This implies that $z \in \overline{\Delta}_\alf$. 
\end{proof}

Theorem~\ref{thm:porosity-of-decorations} follows from the above lemma 
and Proposition~\ref{prop:non-uniformly-porous}.
\section{Accumulation on the critical point}
Let $f:\RS\to \RS$ be a rational mapping with $J(f)\neq \RS$, and $U$ be an arbitrary neighborhood of its 
post-critical set $\pc(f)$. 
By a lemma of \cite{Ly83b}, the orbit of Lebesgue almost every point in $J(f)$ eventually stays in $U$. 
Therefore, by virtue of Proposition~\ref{neighbor}-ii, for every $n\geq 0$ and almost every $z\in J(P_\alf)$,
there is a positive integer $k$ such that the orbit of $f\co{k}(z)$ is contained in $\Omega^n$.
(Indeed, it is proved in \cite{Ch10-I} that while the orbit of a point stays in some $\Omega^n$, it visits all the 
sectors involved in the union.)  

Here we introduce a shrinking sequence of sets containing $\cp$, denoted by $Q^n_0$, that are visited by the orbit 
of almost every $z\in J(P_{\alf})$. 

Recall the sets $\C_n^{-k_n}$ and $(\Csh_n)^{-k_n}$ obtained in the definition of $\rr (f_n)$. 
For every $n\geq 0$, define the sets
\begin{gather*}
W_n:= \C_n^{-k_n}\subset S_n^0, \text{ and } W^n:= \Psi_n(W_n)\subset S_0^n
\end{gather*}
Note that for every $n\geq 0$ there is a pre-critical point of $f_n$ in $W_n$ and hence, 
by Lemma~\ref{conjugacy}-(2), there is a pre-critical point of $f_0$ in $W^n$. 
For every $n\geq 0$, define 
\[Q^n:=f_0\co{\tau(n)}(W^n),\] 
where $\tau(n)$ is the smallest positive integer with  $\cp\in \daron (f_0\co{\tau(n)}(W^n))$.

\begin{lem}\label{good-preimage}
For all $n\geq 0$ and all $z \in (\Csh_n)^{-k_n}$, we have $\Im \phi_n(z)\geq -2$.
\end{lem}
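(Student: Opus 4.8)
The plan is to identify $\phi_n$ on $(\Csh_n)^{-k_n}$ with the Fatou coordinate and then carry the lower bound $\Im\Phi_n\ge 2$, valid on $\Csh_n$, back to $(\Csh_n)^{-k_n}$ along the finite orbit joining the two. Since $k_n$ is, by the definition of the renormalization preceding Theorem~\ref{Ino-Shi2} (and Lemma~\ref{turning}), the first level at which the pulled-back sector returns into the strip $\{1/2<\Re\Phi_n<1/\alf_n-\Bk-1/2\}$, we have $(\Csh_n)^{-k_n}\subset\p_n$, so on this sector $\phi_n$ is just $\Phi_n$ (equivalently $\Phi_n^l$, which agrees with $\Phi_n$ on $\p_n$), and the statement to prove is that $\Phi_n\big((\Csh_n)^{-k_n}\big)\subset\{\Im w\ge -2\}$. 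The map $f_n\co{k_n}\colon(\Csh_n)^{-k_n}\to\Csh_n$ is univalent, and $\Im\Phi_n\ge 2$ on $\Csh_n$ by \eqref{sector-def}. For a point $z\in(\Csh_n)^{-k_n}$ I would follow its orbit $z, f_n(z),\dots,f_n\co{k_n}(z)\in\Csh_n$: on any step where both $f_n\co{j}(z)$ and $f_n\co{j+1}(z)$ lie in $\p_n$, the Abel equation of Theorem~\ref{Ino-Shi1} gives $\Phi_n(f_n\co{j+1}(z))=\Phi_n(f_n\co{j}(z))+1$, so $\Im\Phi_n$ is unchanged; thus only the iterates of $z$ lying outside $\p_n$ can move the imaginary part.

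Next I would locate those exterior iterates. A point $w\in\p_n$ has $f_n(w)\notin\p_n$ only if $\Phi_n(w)+1\notin\Phi_n(\p_n)$, i.e.\ $\Re\Phi_n(w)\ge\lfloor 1/\alf_n\rfloor-\Bk-1$ (the only other possibility, that $f_n(w)$ is near the second fixed point $\sigma_n$, is excluded because the sectors $(\Csh_n)^{-j}$ accumulate only at $0$). Hence the orbit of $z$ can leave $\p_n$ only through the thin strip along the right edge of the petal, and by the comparison with the rotation of angle $\alf_n$ in the proof of Lemma~\ref{turning}, together with the description of the sectors $\C_n^{-k}\cup(\Csh_n)^{-k}$ in \cite[Section 5.A]{IS06} and \cite[Proposition 13]{BC06}, this excursion (i) is confined to a fixed small neighbourhood of the parabolic point $0$, (ii) consists of a uniformly bounded number of consecutive iterates ($\le\Bk''$ by Lemma~\ref{turning}), and (iii) re-enters $\p_n$ only at the last step $f_n\co{k_n}(z)\in\Csh_n$. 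Consequently $\Im\Phi_n(z)=\Im\Phi_n\big(f_n\co{\ell-1}(z)\big)$, where $\ell$ is the first index with $f_n\co{\ell}(z)\notin\p_n$, while $\Im\Phi_n\big(f_n\co{k_n}(z)\big)\ge 2$, so the whole question is how much the imaginary part of the Fatou coordinate can move across one bounded excursion of the orbit near $0$.

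To close, I would estimate that displacement uniformly. Near $0$ the map $f_n$ is $e^{2\pi\alf_n\Bi}z+O(z^2)$, so to leading order it acts, in the analytically continued Fatou coordinate near $0$, as the translation $w\mapsto w+1$, which preserves $\Im w$; the nonlinear part contributes a drift that, over the at most $\Bk''$ exterior iterates and uniformly over the precompact class $\ff$, is bounded by $4$ with the normalisation of \cite{IS06} — the same convention that fixes the cut-offs $\pm 2$ in \eqref{sector-def} — yielding $\Im\Phi_n(z)\ge 2-4=-2$. Equivalently, and more directly, one may quote the Inou--Shishikura region picture recalled in the footnote to Theorem~\ref{Ino-Shi2}: $\Phi_n\big(\C_n^{-k}\cup(\Csh_n)^{-k}\big)$ lies in $D^\sharp_{-k}\cup D_{-k}\cup D''_{-k}\cup D'_{-k+1}\cup D_{-k+1}\cup D^\sharp_{-k+1}$, the pulled-back \emph{upper} sector $(\Csh_n)^{-k}$ meeting only the pieces $D^\sharp_{-k}\cup D_{-k}\cup D_{-k+1}\cup D^\sharp_{-k+1}$, all of which lie in $\{\Im w\ge -2\}$ by construction, whereas the two ``dipping'' pieces $D''_{-k}$, $D'_{-k+1}$ that can descend below $\Im w=-2$ are attached to the pull-back $\C_n^{-k}$ of the critical-value sector. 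The main obstacle is precisely this last point: either making the drift-across-the-excursion estimate rigorous, which forces one to work with the analytic continuation of the Fatou coordinate past the petal into a full neighbourhood of $0$, or unpacking the Inou--Shishikura combinatorics enough to see that $D''$ and $D'$ are disjoint from the upper pulled-back sector; everything else is then forced by the Abel equation and the precompactness of $\ff$.
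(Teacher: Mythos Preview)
Your approach stays entirely at level $n$ and tries to control the vertical drift of $\Phi_n$ along the finite orbit $z,f_n(z),\dots,f_n\co{k_n}(z)$. The reduction is fine: on steps where consecutive iterates both lie in $\p_n$ the Abel equation freezes $\Im\Phi_n$, so only the excursion outside $\p_n$ matters. But the crux of your argument --- that the accumulated drift over that excursion is at most $4$ --- is asserted, not proved. The number $4$ is not forced by the choice of cut-offs $\pm 2$ in \eqref{sector-def}; those cut-offs fix where $\Csh_n$ sits, not how far the imaginary part can wander during the iterates that lie outside $\p_n$. Your fallback, reading off from the $D$-regions of \cite{IS06} that $(\Csh_n)^{-k_n}$ avoids the ``dipping'' pieces $D''_{-k}$ and $D'_{-k+1}$, is plausible but you yourself flag it as the main obstacle, and it would require unpacking the explicit construction in \cite[Section~5.A]{IS06} rather than the soft facts quoted in this paper. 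As written, the proof is incomplete at exactly the point that carries all the content.

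The paper avoids this altogether by passing to level $n{+}1$. The renormalization relation says that $\ex\circ\Phi_n$ semi-conjugates $f_n\co{k_n}$ on $S_n^0$ to $f_{n+1}$ (up to the conjugation $s$), so for $z\in(\Csh_n)^{-k_n}$ the point $\ex\circ\Phi_n(z)$ is the unique $f_{n+1}$-preimage of $\ex\circ\Phi_n\big(f_n\co{k_n}(z)\big)$. Since $\Im\Phi_n\ge 2$ on $\Csh_n$ translates to $|\ex\circ\Phi_n|\le\tfrac{4}{27}e^{-4\pi}$, and $\Im\Phi_n(z)\ge -2$ translates to $|\ex\circ\Phi_n(z)|\le\tfrac{4}{27}e^{4\pi}$, the lemma reduces to showing $f_{n+1}^{-1}\big(B(0,\tfrac{4}{27}e^{-4\pi})\big)\subset B(0,\tfrac{4}{27}e^{4\pi})$. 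That is a one-line Koebe estimate: the unique critical value of $f_{n+1}$ is $-4/27$, so an inverse branch exists on $B(0,4/27)$, fixes $0$ with derivative of modulus $1$, and Koebe's growth theorem gives the inclusion with a huge margin. This is both shorter and entirely self-contained; it never needs to follow the orbit through the region where $\Phi_n$ is not defined.
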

\begin{proof}
By the definition of the renormalization $\rr f_n$, it is enough to show that 
\[\forall w\in B(0,\frac{4}{27}e^{-4\pi}),\text{ we have } f_{n+1}^{-1}(w)\in B(0,\frac{4}{27}e^{4\pi}),\]
where $f_{n+1}^{-1}(w)$ is the unique pre-image of $w$.
The map $f_{n+1}$ has a unique simple critical point mapped to $-4/27$. 
Hence, there is an inverse branch of $f_{n+1}$ defined on the ball $B(0, -4/27)$. 
By Koebe distortion Theorem, it follows that $f_{n+1}^{-1}(B(0,\frac{4}{27} e^{-4\pi}))$ is contained in 
$B( 0, \frac{4}{27}e^{4\pi})$.
\end{proof}

\begin{lem}\label{steps}
Assume that $z\in \Omega^n\setminus \Omega^{n+1}$ and $f_0(z)\in \Omega^{n+1}$, for some $n\geq 0$.
Then, either $z\in Q^n$ or $f_0(z)\in W^{n+1}$ (hence, $f_0\co{(\tau(n+1)+1)}(z)\in Q^{n+1}$).
\end{lem}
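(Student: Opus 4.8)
The plan is to decompose $\Omega^{n+1}$ into forward iterates of its bottom sector and to locate the single place where $f_0$ is two-to-one. Recall that $S^{n+1}_0=W^{n+1}\cup\Psi_{n+1}((\Csh_{n+1})^{-k_{n+1}})$, that $\Omega^{n+1}=\{0\}\cup\bigcup_{i=0}^{M}f_0\co{i}(S^{n+1}_0)$ with $M:=q_{n+1}(k_{n+1}+\lfloor 1/\alf_{n+1}\rfloor-\Bk-1)+q_n$, and that, by Lemma~\ref{conjugacy}-ii together with the fact that $f_n\colon\C_n^{-1}\to\C_n$ is a degree two branched covering, the composition $f_0\co{(k_nq_n+q_{n-1})}\colon S^n_0\to\Psi_n(f_n\co{k_n}(S^0_n))$ is a degree two branched covering whose unique critical point is the pre-critical point $p^n\in W^n$ of $f_0$. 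Consequently $\cp$ is first reached along the orbit $W^n,f_0(W^n),f_0\co{2}(W^n),\dots$ exactly at $Q^n=f_0\co{\tau(n)}(W^n)$, and $Q^n$ is the only sector of the generating orbit $\bigcup_{j}f_0\co{j}(S^n_0)$ of $\Omega^n$ on which $f_0$ is branched.

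Now fix $z$ as in the statement; if $z=\cp$ then $z\in\daron(Q^n)$ and we are done, so assume $z\neq\cp$, whence $f_0(z)$ has exactly two distinct preimages, $z$ and $z'=2\cp-z$. Write $f_0(z)=f_0\co{i}(w)$ with $w\in S^{n+1}_0$ and $i\geq0$ minimal. If $i=0$ and $f_0(z)\in W^{n+1}$, the second alternative of the lemma holds, and then $f_0\co{(\tau(n+1)+1)}(z)\in f_0\co{\tau(n+1)}(W^{n+1})=Q^{n+1}$. In every other case I claim $z'\in\Omega^{n+1}$. When $i\geq1$ this is clear since $z'=f_0\co{(i-1)}(w)$ with $i-1<M$. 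When $i=0$ and $f_0(z)\in\Psi_{n+1}((\Csh_{n+1})^{-k_{n+1}})$, the point $f_0(z)$ lies near $0$, so one of its two preimages lies near $0$ and the other near $-\ea$; the latter is outside $\Omega^n$, so $z$ must be the near-$0$ preimage, and using Lemmas~\ref{good-preimage} and \ref{L:extra-space} one checks that the near-$0$ preimage of $\Psi_{n+1}((\Csh_{n+1})^{-k_{n+1}})$ again lies in $\Omega^{n+1}$ (the sectors $(\Csh_{n+1})^{-k}$, one further pull-back of them, and their first forward iterates all stay near $0$ and inside the relevant domains), so this sub-case is incompatible with $z\notin\Omega^{n+1}$. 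Thus in all remaining cases $z\neq z'$, $z'\in\Omega^{n+1}\subset\Omega^n$, $z\notin\Omega^{n+1}$, and $\cp=\tfrac12(z+z')$.

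It remains to deduce $z\in Q^n$ from this separation. Writing $z'=f_0\co{t}(\zeta')$ with $\zeta'\in S^n_0$, we get $f_0(z)=f_0\co{t+1}(\zeta')$, so both preimages of $f_0(z)$ lie in the generating orbit of $\Omega^n$ and $f_0$ is two-to-one on the sector $f_0\co{t}(S^n_0)$ near $z'$; by the first paragraph this forces $f_0\co{t}(S^n_0)=Q^n$, whence $\cp\in\daron(Q^n)$ and $z=2\cp-z'\in Q^n$, using that near $\cp$ the sector $Q^n$ is invariant under $w\mapsto 2\cp-w$. The main obstacle is precisely this last identification $f_0\co{t}(S^n_0)=Q^n$: turning the soft statement ``$\cp$ separates $z$ from a point of $\Omega^{n+1}$'' into ``$z\in Q^n$'' requires the combinatorial bookkeeping of the renormalization sectors from Section~\ref{sec:tower} --- that $\cp$ is attained only once along the orbit of $S^n_0$, that $\Omega^{n+1}$ is compactly contained in the interior of $\Omega^n$ (Proposition~\ref{neighbor}-i) so that the crossing genuinely takes place through the branched sector rather than across a slit, and that $z'$ is pinned to the successor sectors of $Q^n$ --- carried out in the manner of \cite{Ch10-I}.
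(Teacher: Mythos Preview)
Your overall strategy coincides with the paper's: split into the cases $j=0$ and $j\geq 1$ (your $i$), handle the $j=0$ sharp piece with Lemma~\ref{good-preimage}, and for $j\geq 1$ produce the second preimage $z'=w\in\Omega^{n+1}$ and argue that $z$ and $z'$ can only be separated by the critical sector $Q^n$.

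The paper, however, dispatches the $j\geq 1$ case in one line: it simply asserts that $f_0\colon\Omega^n\setminus Q^n\to\cc$ is univalent, so $z\notin Q^n$ forces $z=w\in\Omega^{n+1}$, a contradiction. This is exactly the fact you spend your last paragraph circling around. Your attempted justification there has a slip: from ``both preimages $z,z'$ of $f_0(z)$ lie in $\Omega^n$ and $z'\in f_0\co{t}(S^n_0)$'' you conclude that $f_0$ is two-to-one on the sector $f_0\co{t}(S^n_0)$, but this does not follow --- nothing yet forces $z$ into the \emph{same} sector as $z'$. You recognise this and fall back on ``combinatorial bookkeeping \dots\ in the manner of \cite{Ch10-I}'', which is precisely the univalence statement the paper invokes directly. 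So your proof is not wrong, but the last paragraph is a detour that ultimately appeals to the same black box; you would do better to state the univalence of $f_0$ on $\Omega^n\setminus Q^n$ up front (it follows from the fact that the chain $f_0\co{i}(S^n_0)\to f_0\co{(i+1)}(S^n_0)$ is branched at exactly one step, namely $Q^n\to f_0(Q^n)$, and that this branched step already absorbs both preimages of every point of $f_0(Q^n)$) and then argue exactly as the paper does.
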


\begin{proof}
There is a 
\[j\in \{0,1,\dots, q_{n+1}(k_{n+1}+\lfloor 1/\alf_{n+1}\rfloor-\Bk-1)+q_n\},\] 
such that 
$f_0(z)\in f_0\co{j}(S_0^{n+1})$, by the definition of $\Omega^{n+1}$.
If $j\neq 0$, there is a $w\in f_0^{j-1}(S^{n+1}_0)$ with $f_0(w)=f_0(z)$. 
If $z\notin Q^n$, as $f_0:\Omega^n\setminus Q^n\to \cc$ is univalent, we must have $z=w\in\Omega^{n+1}$ 
which contradicts the assumption in the lemma. 
Therefore, either $j=0$ or $z\in Q^n$.

If $j=0$, then 
\begin{align*}
f_0(z) \in S^{n+1}_0 =&  \Psi_{n+1}(S_{n+1}^0)\\
                                    =&  W^{n+1}\cup \Psi_{n+1}((\Csh_{n+1})^{-k_{n+1}})
\end{align*}                                  
However, if $f_0(z)\in \Psi_{n+1}((\Csh_{n+1})^{-k_{n+1}})$, by Lemma~\ref{good-preimage}, $z\in \Omega^{n+1}$. 
This finishes the proof of the lemma.
\end{proof}

Let us define the \textit{eccentricity} of a domain $Q$ at a point $q\in \daron Q$ as 
\[\ecc(Q,q):= \frac{\inf \{r\mid Q \subset B(q, r)\}}{\sup\{r\mid B(q, r)\subset Q\}}.\]

\begin{lem}\label{nice&round}
We have 
\begin{itemize}
\item $\sup_n \ecc(Q^n, \cp)<\infty$,
\item $\lim_{n\to \infty} \diam Q^n=0$.  
\end{itemize}
\end{lem}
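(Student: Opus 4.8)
\emph{Strategy.} The plan is to realise $Q^n$, up to uniformly bounded distortion, as the image $\Psi_n(\C_n^{-1})$ of the first preimage of the box $\C_n$, and then to split the two assertions into the shape of $\C_n^{-1}$ on level $n$ and the distortion and contraction of $\Psi_n$. For the identification: since $f_n\colon\C_n^{-k}\to\C_n^{-k+1}$ is univalent for $2\le k\le k_n$, one has $f_n\co{(k_n-1)}(W_n)=f_n\co{(k_n-1)}(\C_n^{-k_n})=\C_n^{-1}$, and $W_n$ contains the precritical point $p_n:=(f_n\co{(k_n-1)}|_{\C_n^{-k_n}})^{-1}(\cp_{f_n})\in\daron(W_n)$. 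Iterating Lemma~\ref{conjugacy} (with the extended Fatou coordinates of Subsection~\ref{SS:extensions}, since $\C_n^{-1}$ is not contained in $\p_n$) one checks that $k_n-1$ iterates of $f_n$ starting in $S^0_n$ correspond to $\tau(n)=(k_n-1)q_n+q_{n-1}$ iterates of $f_0$, that is $f_0\co{\tau(n)}\circ\Psi_n=\Psi_n\circ f_n\co{(k_n-1)}$ on $W_n$; moreover $\tau(n)$ is the \emph{smallest} integer with $\cp\in\daron(f_0\co{\tau(n)}(W^n))$, because $f_0\co{\tau(n)}\circ\Psi_n(p_n)=\Psi_n(\cp_{f_n})$ equals $\cp$ while, since $f_0\co{(k_nq_n+q_{n-1})}|_{S^n_0}$ is a degree-two branched covering whose single critical point is $\Psi_n(p_n)$ and $\cp$ is not periodic, no earlier iterate of $W^n\subset S^n_0$ contains $\cp$. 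Hence
\[Q^n=f_0\co{\tau(n)}(W^n)=\Psi_n\big(f_n\co{(k_n-1)}(\C_n^{-k_n})\big)=\Psi_n(\C_n^{-1}),\qquad \cp=\Psi_n(\cp_{f_n})\in\daron(Q^n).\]

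\emph{Level-$n$ geometry.} In the coordinate $\Phi_n$ the set $\C_n$ is the fixed rectangle $[1/2,3/2]\times(-2,2]$ with $\cv_{f_n}$ in its interior, and $\Phi_n$ depends continuously on the map (Theorem~\ref{Ino-Shi1}-v) over the compact class $\fff$, so the $\C_n$ have uniformly comparable shape. As $f_n\colon\C_n^{-1}\to\C_n$ is a degree-two branched covering sending the branch point $\cp_{f_n}$ to the interior point $\cv_{f_n}$, and $\C_n^{-1}$ is compactly contained in $\Dom f_n$ by the defining properties of the renormalization, we get that $\ecc(\C_n^{-1},\cp_{f_n})$ is bounded independently of $n$; the uniformity uses Lemma~\ref{turning} to bound $k_n$, Lemma~\ref{L:extra-space} and Theorem~\ref{Ino-Shi2} for a definite collar of $\Dom f_n$, and the compactness of $\ff$. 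In particular there is an annular neighbourhood of $\overline{\C_n^{-1}}$ of modulus bounded below on which the relevant extension of $\Psi_n$ is univalent, being a composition of inverse branches of the covering map $\ex$ and of the Fatou coordinates taken away from their critical values, exactly as in Lemma~\ref{L:nice-chain}-iii.

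\emph{Conclusion.} For the first bullet, Koebe's distortion theorem applied to $\Psi_n$ on that neighbourhood yields a universal constant $K$ with $\ecc(Q^n,\cp)=\ecc(\Psi_n(\C_n^{-1}),\Psi_n(\cp_{f_n}))\le K\cdot\ecc(\C_n^{-1},\cp_{f_n})$, which is uniformly bounded by the previous paragraph. For the second bullet, one embeds the picture into the slit domains $\Omega^0_i\setminus I_{i,r_i}$ and runs the chain of maps $g_i$ from Lemma~\ref{L:nice-chain}: starting from a slit neighbourhood of $\C_n^{-1}$ compactly inside $\Omega^0_n$, the composition down to level $0$ is uniformly contracting by the factor $\delta_4<1$ for the hyperbolic metrics, so the hyperbolic diameter of $Q^n$ inside $\Omega^0_0$ is $O(\delta_4^{\,n})$; since $\cp$ lies at a definite distance from $\partial\Omega^0_0$, near $\cp$ the hyperbolic and Euclidean scales are comparable, and therefore $\diam Q^n\to 0$.

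The step I expect to be the main obstacle is the cross-level bookkeeping together with the uniformity: pinning down $\tau(n)=(k_n-1)q_n+q_{n-1}$ exactly, establishing the equivariance $f_0\co{\tau(n)}\circ\Psi_n=\Psi_n\circ f_n\co{(k_n-1)}$ on all of $\C_n^{-1}$ in terms of the extended coordinates of Subsection~\ref{SS:extensions} (so that the identity $Q^n=\Psi_n(\C_n^{-1})$ is legitimate for every $n$), and verifying that the moduli, the Koebe constant $K$, and the contraction factor $\delta_4$ are genuinely independent of $n$ and of $\alf$.
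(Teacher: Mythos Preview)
Your overall strategy for the first bullet---realise $Q^n$ as a bounded-distortion image of a level-$n$ set with controlled eccentricity---is correct, but the specific identification $Q^n=\Psi_n(\C_n^{-1})$ with $\cp=\Psi_n(\cp_{f_n})$ is where the argument breaks. The map $\Psi_n$ is a composition of \emph{univalent} changes of coordinate, so it has no reason to send $\cp_{f_n}$ to $\cp_{f_0}$: concretely, $\psi_n(\cp_{f_n})=\Phi_{n-1}^{-1}(\eta_n(\cp_{f_n}))$, and $\eta_n(\cp_{f_n})$ is a lift of $\cp_{f_n}$ under $\ex$, not in general the point $0=\Phi_{n-1}(\cp_{f_{n-1}})$. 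Relatedly, your formula $\tau(n)=(k_n-1)q_n+q_{n-1}$ cannot be read off Lemma~\ref{conjugacy}: part~(i) applies only on $\p_n'$, and after $k_n-1$ iterates of $f_n$ one lands at $\cp_{f_n}\in\partial\p_n$, outside that range. You correctly flag this cross-level bookkeeping as the main obstacle, but as written it is a genuine gap, not just a missing detail.

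The paper sidesteps all of this by staying on level $n$ inside $\p_n$. It builds a simply connected $\widehat{W}_n\subset\p_n$ with $W_n\subset\widehat{W}_n$ and $\inf_n\bmod(\widehat{W}_n\setminus W_n)>0$, on which $f_n\co{k_n}$ is a proper degree-two branched cover; then Lemma~\ref{conjugacy}-(ii) makes $f_0\co{(k_nq_n+q_{n-1})}$ a degree-two branched cover on $\Psi_n(\widehat{W}_n)$, so $\tau(n)<k_nq_n+q_{n-1}$ and $f_0\co{\tau(n)}\circ\Psi_n$ is \emph{univalent} on $\widehat{W}_n$. Koebe is then applied to this single univalent map $f_0\co{\tau(n)}\circ\Psi_n$ on $\widehat{W}_n\supset W_n$, transferring the bounded eccentricity of $W_n$ at the precritical point $f_n^{-k_n}(\cv_{f_n})$ to that of $Q^n$ at $\cp$. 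No extension of $\Psi_n$ beyond $\p_n$ and no exact value of $\tau(n)$ are needed.

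For the second bullet your contraction argument via Lemma~\ref{L:nice-chain} is in principle valid and gives the stronger conclusion $\diam Q^n\to 0$ exponentially; the paper itself remarks this after the proof. But the paper's own argument is softer and shorter: since $Q^n\subset\Omega^n$ can be iterated at least $(a_{n+1}-\Bk-1)q_n\to\infty$ times inside the bounded set $\Omega^n$, if $\diam Q^n\not\to 0$ then by the first bullet a fixed ball about $\cp$ lies in infinitely many $Q^n$, and Montel's theorem would place $\cp$ in the Fatou set, a contradiction. Your route requires adapting the $g_i$-chain (which is built pointwise from the sequence $(z_i,\zeta_i)$) to cover all of $\C_n^{-1}$; the Montel argument needs none of that machinery.
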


\begin{proof}
The proof of the first statement is broken into rour steps.
\medskip

{\em Step 1.} For every $n\geq 0$, there exists a simply connected domain $\widehat{W}_n$
containing $W_n$ with the following properties.
\begin{itemize}
\item $\widehat{W}_n\subset \p_n$, $f_n\co{k_n}(\widehat{W}_n)\subset \p_n$,
\item $\inf_n \!\!\!\mod (\widehat{W}_n\setminus W_n) > 0$,
\item $f_n\co{k_n}:\widehat{W}_n\to f_n^{k_n}(\widehat{W}_n)$  is a proper branched covering of 
degree two. 
\end{itemize}
Recall that $f_n\co{k_n}: W_n\to \C_n$ is a proper branched covering of degree two, 
$k_n$ is uniformly bounded (Lemma~\ref{turning}), $B_{1/2}(W_n)\subset \p_n$, and 
$B_{1/2}(\C_n)\subset \p_n$.
Now the above step follows from the compactness of the class $\ff$, and the continuous 
dependence of $W_n$ on $f_n$.
\medskip

{\em Step 2.} If $f_n^{-k_n}(\cv_{f_n})$ is the unique preimage of $\cv_{f_n}$ contained in $W_n$
then,   
\[\sup_n \ecc (W_n, f_n^{-k_n}(\cv_{f_n}))<\infty.\]

By Koebe distortion Theorem, $\sup_n \ecc (\C_n, \cv_{f_n}) <\infty$. 
Indeed, $\Phi_n^{-1}$ is defined and univalent on the $1/2$-neighborhood of $\Phi_n(\C_n)$.
The map $f_n\co{k_n}: W_n\to \C_n$ belongs to a compact class by Step 1.  
This implies the above statement on eccentricity of $W_n$. 
 \medskip
  
{\em Step3.} $f_0\co{\tau(n)}$ is univalent on $\Psi_n(\widehat{W}_n)$, and 
$f_0\co{\tau(n)}(\Psi_n( f_n^{-k_n}(\cv_{f_n})))=\cp.$

By Step 1 and Lemma~\ref{conjugacy}, the map 
$f_0\co{(k_nq_n+q_{n-1})}: \Psi_n(\widehat{W}_n)\to \p_0$ is a proper branched covering of 
degree two mapping $\Psi_n(f_n^{-k_n}(\cv_{f_n}))$ to $\cv$.
We must have 
\begin{equation}\label{bound-tau}
\tau(n)<k_nq_n+q_{n-1}.
\end{equation} 
On the other hand, $f_0\co{\tau(n)}(W^n)$ contains a critical point, and therefore,  
$f_0\co{\tau(n)}$ must be univalent on $\Psi_n(\widehat{W}_n)$.
\medskip

{\em Step 4. } We have $\sup_n \ecc(Q^n, \cp)<\infty$. 

By definition $Q^n:=f_0\co{\tau(n)}\circ \Psi_n(W^n_0)$, and by Step 3, $f_0\co{\tau(n)}\circ \Psi_n$ 
has univalent extension onto $\widehat{W}_n$ with 
$\inf_n\!\!\!\mod (\widehat{W}_n\setminus W_n)>0$.  
Again by the Koebe distortion Theorem, there exists a constant $C(\eps)$ such that 
\[\ecc (Q^n, \cp)\leq C(\eps) \ecc(W_n,f_n^{-k_n}(\cv_{f_n})).\]
Now Step 4 follows from Step 2.
\medskip

To prove the second part, we use Montel's Normal Family Theorem. 
Since $Q^n\subset f_0\co{\tau(n)} (S_0^n)$, by definition of $\Omega^n$,
$f_0$ can be iterated at least 
\[q_n(k_n+\lfloor 1/\alf_n\rfloor-\Bk-1)+q_{n-1}-\tau(n)\] 
times on $Q^n$ with values in $\Omega^n$.
By \eqref{bound-tau}, the above quantity goes to infinity as $n$ goes to infinity. 

Now if $\limsup_{n\to \infty} \diam( Q^n)\neq 0$, by the first part, a ball of positive size about $\cp$ 
is contained in infinitely many $Q^n$'s.
By Montel's Normal Family Theorem, this implies that the critical point belongs to the 
Fatou set of $f_0$, which is not possible.   
\end{proof}

Indeed, one can show that $\diam Q^n$ converges exponentially fast to zero by showing that 
the changes of coordinates between different levels of renormalization are uniformly contracting. 
See Subsection 4.3 in \cite{Ch10-I}.  

\begin{proof}[Proof of Theorem~\ref{acc-on-cp}]
By the argument after Proposition~\ref{neighbor}, the sets 
\[E_n:=\{z\in J(P_\alf)\mid \orb(z) \text{ eventually stays in } \Omega^n\},\]
for $n\geq 0$, have full Lebesgue measure in $J(P_\alf)$.
Therefore, their intersection has full Lebesgue measure in $J(P_\alf)$.

Let $z\in \cap_{n=0}^\infty E_n\cap J(P_\alf)$. 
As $\cap_{j=0}^\infty \Omega^n\cap J(P_\alf)$ has zero area by Proposition~\ref{intersection-area} in the the Introduction and Theorem C 
in \cite{Ch10-I},  there are increasing sequences of positive integers $\{n_i\}_{i=1}^{\infty}$ and $\{t_i\}_{i=1}^\infty$ 
with the following property:
\[P_\alf\co{t_i}(z)\in \Omega^{n_i}\setminus \Omega^{n_i+1}, \text{ and }  P_\alf\co{t_i+1}(z)\in \Omega^{n_i+1}.\]
Now by Lemma~\ref{steps}, there are positive integers $s_i\geq t_i$ with $P_\alf\co{s_i}(z)\in Q_0^{n_i}$.
Combining with Lemma~\ref{nice&round}, we conclude that the orbit of $z$ gets arbitrarily close to the critical 
point of $P_\alf$. 
\end{proof}
\section{Perturbed Fatou coordinate}\label{S:perturbed-coordinate}
\subsection{The Pre-Fatou coordinate}
Let 
\[h(z)=\ea \cdot P \circ \vfi^{-1}(z)\colon \vfi(U)\ra \cc\] 
be a map in the class $\ff_\alf$ with $\alf\neq 0$, and let $\sigma_h$ denote the non-zero fixed point of $h$. 
For $\alf$ small enough, $h(z)$ has only two fixed points $0$ and $\sigma_h$ in a fixed neighborhood 
of $0$ (This property is guaranteed by assuming $\alf\in\irr$, \cite{IS06}). 
By Koebe distortion Theorem one can see that $\cp_h\in B(0,2)\setminus B(0,0.22)$. 

Consider the universal covering
\[\tau_h(w):= \frac{\sigma_h}{1-e^{-2\pi \Bi \alf w}}: \mathbb{C}\to \RS\setminus \{0,\sigma_h\}\] 
that satisfies $\tau_h(w+\alf^{-1})=\tau_h(w)$, for all $w \in \mathbb{C}$,  
$\tau_h (w)\ra 0$ as $\Im (\alf w) \ra\infty$, and $\tau_h (w) \ra \sigma_h$ as $\Im (\alf w) \ra -\infty$.

The map $h$ lifts under the covering $\tau_h$ to the map 
\[F_h(w):=w+\frac{1}{2\pi\alf \Bi}\log\big(1-\frac{\sigma_hu_h(z)}{1+zu_h(z))}\big),
\quad \text{with } z=\tau_\alf(w), u_h(z):= \frac{h(z)-z}{z(z-\sigma_h)}\]
defined on the set of points $w$ with $\tau_h(w)\in \Dom h$. 
The branch of $\log$ in the above formula is determined by $-\pi < \Im \log(\cdot) < \pi.$ 
We have  
\begin{equation}\label{lift-relation}
h \circ \tau_h=\tau_h \circ F_h, \quad \text{and}\quad F_h(w)+\alf^{-1}= F_h(w+\alf^{-1}).  
\end{equation}

The plan is to estimate the conformal mapping $L_h$ that conjugates $F_h$ to the translation by one.
For every real number $R>0$, let $\Theta(R)$ denote the set  
\[\Theta(R):=\cc \setminus \cup_{n \in \mathbb{Z}} B(n/\alf,R).\]
\begin{lem}\label{cyl-cond}
There exist positive constants $\eps_0$, $C_1$, $C_2$, and $C_3$ such 
that for every map $h \in \ff_\alf$ with $\alf\in (0,\eps_0)$ we have 
\medskip

\begin{enumerate}
\item[i.] The induced map $F_h$ is defined and is univalent on $\Theta (C_1)$. Moreover, on 
$\Theta (C_1)$
\begin{equation*}\label{I}
|F_h(w)-(w+1)|<1/4, \quad \text{and} \quad |F_h'(w)-1|<1/4.
\end{equation*} 
\item[ii.] For every $r\in (0,1/2)$ and every $w\in \Theta(r/\alf)\cap \Theta(C_1)$, we have  
\begin{equation*}\label{II}
|F_h(w)-(w+1)|< C_2 \frac{\alf}{r} e^{-2\pi\alf \Im w}.
\end{equation*}
\item[iii.] There exist a critical point $\cp_{F_h}\in B(0,C_1)$ of $F_h$, and the smallest non-negative integer 
$i(h)$ with 
\[F_h\co{i(h)}(\cp_{F_h})\in \Theta(C_1),\quad \text{and} \quad \sup_h i(h) <\infty.\]
\item[iv.] For every positive integer $j\leq i(h)+\frac{2}{3\alf}$, we have  
\[|F_h\co{j}(\cp_{F_h})-j|\leq C_3(1+\log j).\]  
\end{enumerate}
\end{lem}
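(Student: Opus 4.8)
The plan relies on a few explicit computations. Writing $z=\tau_h(w)$ and $t(z):=\sigma_h u_h(z)/(1+z u_h(z))$, from $u_h(z)=(h(z)-z)/(z(z-\sigma_h))$ one gets
\[1+z u_h(z)=\frac{h(z)-\sigma_h}{z-\sigma_h},\qquad 1+(z-\sigma_h)u_h(z)=\frac{h(z)}{z},\qquad 1-t(z)=\frac{h(z)(z-\sigma_h)}{z(h(z)-\sigma_h)},\]
and $\tau_h'(w)=-\frac{2\pi\alf\Bi}{\sigma_h}\,\tau_h(w)\bigl(\tau_h(w)-\sigma_h\bigr)$. Since $1-t(0)=h'(0)=\ea$, the function $\Psi_h(z):=(1-t(z))/\ea$ has $\Psi_h(0)=1$, and by the definition of $F_h$ and the choice of branch, $F_h(w)-w-1=\frac{1}{2\pi\alf\Bi}\log\Psi_h(\tau_h(w))$. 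The key point is that for $z$ near $0$,
\[(\log\Psi_h)'(z)=\frac{\sigma_h\bigl(u_h(z)^2-u_h'(z)\bigr)}{\bigl(1+z u_h(z)\bigr)\bigl(1+(z-\sigma_h)u_h(z)\bigr)}=O(\alf),\]
because precompactness of $\ff$ bounds $u_h,u_h'$, the two denominators (equal to $\frac{h(z)-\sigma_h}{z-\sigma_h}$ and $\frac{h(z)}{z}$) are bounded away from $0$ near $0$, and $\sigma_h=-4\pi\alf\Bi/f''(0)+o(\alf)$ with $f''(0)$ in a compact subset of $\cc\setminus\{0\}$. Integrating from $0$ gives $|\log\Psi_h(z)|\le C\alf|z|$, hence $|F_h(w)-w-1|\le\frac{C}{2\pi}|\tau_h(w)|$ whenever $\tau_h(w)$ is near $0$. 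The second ingredient, used throughout, is the elementary bound $|1-e^{-2\pi\Bi\zeta}|\ge c_0\, d(\zeta,\mathbb{Z})\max\{1,e^{2\pi\Im\zeta}\}$ for $d(\zeta,\mathbb{Z})\le 1/2$; with $\zeta=\alf w$ it says $|\tau_h(w)|=|\sigma_h|/|1-e^{-2\pi\Bi\alf w}|$ is small precisely when $w$ is far from the punctures $\alf^{-1}\mathbb{Z}$.

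For (i), taking $r=\alf C_1$ in that bound yields $|\tau_h(w)|\le C/C_1$ on $\Theta(C_1)$, so for $C_1$ large $\tau_h(\Theta(C_1))$ lies in a small fixed disc $\mathcal{D}$ about $0$; then $F_h$ is defined on $\Theta(C_1)$ and $|F_h(w)-(w+1)|\le\frac{C}{2\pi}|\tau_h(w)|<1/4$, while $F_h'(w)-1=\frac{1}{2\pi\alf\Bi}(\log\Psi_h)'(\tau_h(w))\tau_h'(w)$ with $|(\log\Psi_h)'|\le C\alf$ and $|\tau_h'(w)|=\frac{2\pi\alf}{|\sigma_h|}|\tau_h(w)||\tau_h(w)-\sigma_h|=O(C_1^{-2})$, so $|F_h'(w)-1|<1/4$. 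Univalence: $F_h(w_1)=F_h(w_2)$ with $w_i\in\Theta(C_1)$ gives $h(\tau_h(w_1))=h(\tau_h(w_2))$ with both values in $\mathcal{D}$, where $h$ is injective (Koebe and precompactness, $\mathcal{D}$ small), so $\tau_h(w_1)=\tau_h(w_2)$, hence $w_1-w_2\in\alf^{-1}\mathbb{Z}$, and $F_h(w+\alf^{-1})=F_h(w)+\alf^{-1}$ forces $w_1=w_2$. Part (ii) follows at once: for $w\in\Theta(r/\alf)\cap\Theta(C_1)$ the bound above gives $|\tau_h(w)|\le\frac{C\alf}{c_0 r}e^{-2\pi\alf\Im w}$ (when $\Im w<0$ use $e^{-2\pi\alf\Im w}\ge 1$), whence $|F_h(w)-(w+1)|\le C_2\frac{\alf}{r}e^{-2\pi\alf\Im w}$.

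For (iii), differentiating $h\circ\tau_h=\tau_h\circ F_h$ gives $\tau_h'(F_h(w))F_h'(w)=h'(\tau_h(w))\tau_h'(w)$, and since $\tau_h'$ never vanishes on $\cc$, the critical points of $F_h$ are exactly $\tau_h^{-1}(\cp_h)$. As $0.22\le|\cp_h|\le 2$ keeps $\cp_h$ away from $0$ and $\sigma_h$, the value $\cp_{F_h}:=-\frac{1}{2\pi\alf\Bi}\log(1-\sigma_h/\cp_h)$ solves $\tau_h(\cp_{F_h})=\cp_h$ with $|\cp_{F_h}|=O(\alf^{-1}|\sigma_h|)=O(1)$, so $\cp_{F_h}\in B(0,C_1)$ for $C_1$ large. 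Inductively $\tau_h(F_h\co{j}(\cp_{F_h}))=h\co{j}(\cp_h)$; since $\tau_h(\bigcup_n B(n/\alf,C_1))$ avoids a fixed disc $B(0,\eps_0)$ and $\Theta(C_1)$ is $\alf^{-1}$-periodic, $0<|h\co{j}(\cp_h)|<\eps_0$ implies $F_h\co{j}(\cp_{F_h})\in\Theta(C_1)$. For every germ $g\in\overline{\ff}$ the critical orbit lies in and converges to the parabolic point $0$ through the attracting petal (Inou--Shishikura), so enters $B(0,\eps_0/2)$ after finitely many steps; by compactness of $\overline{\ff}$ and continuity of finite orbits there is a uniform $i_0$ with $|h\co{i}(\cp_h)|<\eps_0$ for some $i\le i_0$, hence $i(h)\le i_0$.

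For (iv), write $w_l:=F_h\co{l}(\cp_{F_h})$; for $j\le i(h)$ the bound is immediate from $|F_h(w)-w|=|\log(1-t(\tau_h(w)))|/(2\pi\alf)=O(1)$, valid wherever $\tau_h(w)$ stays in a fixed compact subset of $U_h$ (there $|t|=O(\alf)$) — in particular along the first $i_0$ critical iterates, which stay in a uniform compact set by the Inou--Shishikura bound on the post-critical set — together with $j\le i_0$. For $j>i(h)$, that same estimate gives $|w_{i(h)}-i(h)|=O(1)$, and — since $h\co{i(h)}(\cp_h)$ lies in the attracting petal of $0$, which corresponds under $\tau_h$ to a half-plane-like region on which $w\mapsto w+1$ points away from the puncture at $0$ — the orbit escapes $\bigcup_n B(n/\alf,C_1)$ with $\Re w_{i(h)}$ bounded below by a fixed positive constant. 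Applying (i) inductively, as long as $w_{i(h)},\dots,w_{l-1}\in\Theta(C_1)$ one has $|w_l-w_{i(h)}-(l-i(h))|<(l-i(h))/4$; for $l\le i(h)+\frac{2}{3\alf}$ this keeps $\Re w_l$ between $\frac34(l-i(h))+O(1)$ and $\frac54(l-i(h))+O(1)$, so $\Re w_l<\alf^{-1}-C_1$ and $d(w_l,\alf^{-1}\mathbb{Z})\gtrsim\min(l,\alf^{-1}-l)\gtrsim l$, keeping $w_l\in\Theta(C_1)$ and continuing the induction over the whole range; moreover $|\Im w_l-\Im w_{i(h)}|<(l-i(h))/4\le\frac{1}{6\alf}$, so $\alf|\Im w_l|<\frac15$ and $e^{-2\pi\alf\Im w_l}=O(1)$. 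Telescoping $w_j-j=(w_{i(h)}-i(h))+\sum_{l=i(h)}^{j-1}(F_h(w_l)-w_l-1)$ and bounding the $l$-th summand by $1/4$ (part (i)) for the boundedly many $l$ below a fixed constant, and by (ii) with $r=\alf l$ (or $r=1/2$ once $l>\frac{1}{2\alf}$) otherwise, i.e.\ by $C_2\frac{\alf}{\alf l}e^{-2\pi\alf\Im w_l}\le C'/l$, one finds the sum is $O(1+\log j)$, giving $|F_h\co{j}(\cp_{F_h})-j|\le C_3(1+\log j)$. The main obstacle is exactly this step (iv): all the quantitative estimates are in (i)--(iii), but one must pin down where and in which direction the critical orbit leaves the puncture neighbourhood and that it then drifts by $\approx 1$ per step with $\alf\Im w_l$ bounded over the full range $j\le i(h)+\frac{2}{3\alf}$, which rests on the attracting-petal geometry of near-parabolic germs and the approximation $\tau_h(w)\approx-2/(f''(0)w)$ near a puncture; establishing the uniform $\sup_h i(h)<\infty$ in (iii) for the compact family $\overline{\ff}$ is the other delicate point.
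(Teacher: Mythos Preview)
Your proof is correct and follows essentially the same approach as the paper: the key identity $1-t(z)=\ea\Psi_h(z)$ with $\Psi_h(0)=1$, the bound $|(\log\Psi_h)'|=O(\alf)$ via $\sigma_h=O(\alf)$ and precompactness of $u_h$, and the compactness argument for $\sup_h i(h)<\infty$ all match the paper's reasoning. Your treatment is in fact more explicit in two places --- you give a direct univalence argument for $F_h$ on $\Theta(C_1)$ (the paper just says ``by definition''), and you supply the telescoping argument for part~(iv) with $r\asymp\alf l$ in~(ii) to get the $\sum_l 1/l=O(\log j)$ bound, whereas the paper simply refers to \cite{Ch10-I} for this step.
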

\begin{proof}
{\em Part i. }Recall that the model polynomial $P(z)$ has only two critical points at $-1$ and $-1/3$, so it is univalent 
on the disk of radius $1/3$. 
The 1/4-Theorem guarantees that $\vfi_h(B(0,1/3))$ contains the disk of radius $1/12$. 
Hence, the composition $h(z)=\vfi_h^{-1}\circ P(\ea z)$ is univalent on the disk of radius $1/12$.

Note that 
\[\sigma_h=\frac{1-e^{2\pi\alpha \Bi}}{u_h(0)},\]
and 
\[\{u_h(0)\mid h\in \ff_{[0,\alpha^*]}\}\]
is compactly contained in $\cc\setminus \{0\}$.
Now by explicit estimates on $\tau_h$, there are constants $A_1$ and $\eps_0$ such that for all $h\in \ff_\alf$ 
with $\alf\in (0,\eps_0)$ and all $w\in \Theta(A_1)$, $|\tau_h(w)|\leq 1/12$.
By definition, $F_h$ must be univalent on $\Theta(A_1)$, for such maps $h$.

For the inequalities observe that 
\begin{align*}
|F_h(w)-(w+1)|&=|\frac{1}{2\pi \alpha\Bi}\log \big(1-\frac{\sigma_h u_h(z)}{1+z u_h(z)}\big)-1|\\
&= |\frac{1}{2\pi \alpha\Bi}\log \big(1-\frac{\sigma_h u_h(z)}{1+z u_h(z)}\big)- 
\frac{\log e^{2\pi \alpha\Bi}}{2\pi \alpha\Bi}|\\
&=\frac{1}{2\pi \alpha}|\log \big((1-\frac{\sigma_h u_h(z)}{1+z u_h(z)}) e^{-2\pi \alpha\Bi}\big)|.
\end{align*}
Also, replacing $\sigma_f$ by the above expression, 
\begin{align*}
|(1-\frac{\sigma_f u_f(z)}{1+z u_f(z)})e^{-2\pi\alpha\Bi}-1|
&= |(1-\frac{\sigma_f u_f(z)}{1+z u_f(z)})- e^{2\pi\alpha\Bi}|\\
& = |1-e^{2\pi\alpha\Bi}| |1-\frac{u_f(z)}{(1+zu_f(z))u_f(0)}|\\
&\leq 2\pi \alpha  |1-\frac{u_f(z)}{(1+zu_f(z))u_f(0)}|. 
\end{align*}
On the other hand, by the above argument, there exists a constant $A_1'$ (independent of $h$ and $\alpha$) 
such that the sets $\tau_h(\Theta(A_1'))$ are uniformly contained in a given neighborhood of zero.
Since the set of maps $u_h$ forms a compact set, the above estimates imply the first inequality. 
The second inequality on the set $\Theta(A_1'+1)$ follows from the first inequality and Cauchy Integral Formula. 
\medskip

{\em Part ii. }We further estimate the above expression as
\begin{align*}
 |1-\frac{u_h(z)}{(1+z u_h(z))u_h(0)}| 
&\leq 2|\frac{u_h'(0)-u_h^2(0)}{u_h(0)}| |z| \leq C_2' |\tau_h(w)|\leq C_2 \frac{\alf}{r} e^{-2\pi\alf \Im w},
\end{align*} 
for some constants $C_2', C_2$, by explicit estimates on the map $\tau_h$.
\medskip

{\em Part iii. } 
The unique critical point of $h$ at $\vfi_h(-1/3)$ lifts under $\tau_h$ into a $1/\alpha$-periodic set. 
By Part i, this set must contain an element in $B(0, C_2)$.
Indeed, by a simple estimate one can see that $U=\Dom \vfi_h$ contains the ball $B(0,2/3)$. 
Using the Koebe distortion Theorem, one can conclude that the critical point of $h$ belongs to the 
annulus $B(0,2)\setminus B(0,.22)$. 
This implies that the set of the critical points of $F_h$ in $B(0,C_2)$ is indeed contained in a compact subset of 
$\cc\setminus (\mathbb{Z}/\alpha)$. 
As $\alpha\to 0$, $h$ converges to a map in $\ff$, and for the maps in $\ff$ the orbit of the critical value is 
attracted by the parabolic fixed point at zero. 
By compactness of the class $\ff$, this implies that the chosen critical point of $ F_h$ leaves $B(0,C_2)$ 
in a uniformly bounded number of iterates.
\medskip

{\em Part iv. } 
This follows from the estimates in Part i, and Part ii. 
One may refer to \cite{Ch10-I} for a detailed proof.
\end{proof}
\subsection{The linearizing coordinate}
Let $h\in \ff_{(0,\alf_*]}$ with the perturbed Fatou coordinate $\Phi_h: \p_h\to \cc$.
The inverse image $\tau_h^{-1}(\p_h)$ has countably many components going from $-\Bi \infty$ to $+\Bi\infty$. 
Define 
\begin{equation}\label{linearize}
L_h:=\Phi_h\circ \tau_h: \Dom L_h \to \cc,
\end{equation}
where $\Dom L_h$ is, by definition, the component of $\tau_h^{-1}(\p_h)$ separating $0$ and $1/\alf$ in $\cc$.
It follows from Theorem~\ref{Ino-Shi1} that the map $L_h: \Dom L_h\to \cc$ is univalent, 
$L_h(\cv_{F_h})=1$, $L_h(\Dom L_h)$ contains the set  
\[\{w\in\cc : 0< \Re(w) < \lfloor 1/\alf\rfloor-\Bk \},\]
$L_h(w)\to +\Bi\infty$ as $\Im (w)\to +\infty$, and  $L_h(w)\to -\Bi\infty$ as $\Im (w)\to -\infty$.
Moreover, $L_h$ satisfies the Abel functional equation   
\begin{equation}\label{equivariance}
L_h(F_h(w))=L_h(w)+1,
\end{equation} 
whenever both sides are defined.

Using~\eqref{equivariance}, if $\alf< \eps_0$, $L_h$ extends to a univalent map on the set 
\begin{multline*}
\Sigma_{C_1}:=\{w\in\cc : C_1 \leq \Re(w)\leq \alf^{-1}-C_1\} \\ 
\cup \{w \in \cc : \Re(w)\geq \alf^{-1}-C_1, \text{ and } |\!\Im w|\geq \Re(w)-\alf^{-1}+2C_1\}\\
\cup\{w \in \cc : \Re(w)\leq C_1, \text{ and } |\!\Im w|\geq -\Re(w)+2C_1\}.
\end{multline*}
More precisely, by Lemma~\ref{cyl-cond}-i, for every $w\in \Sigma_{C_1}$ there is an integer $j$ with 
$F_h^j(w)\in \Dom L_h$. 
One defines $L_h (w):= L_h(F_h^j(w))-j$. 
We denote the extended map by the same notation $L_h$.

\begin{lem}\label{derivative}\label{bounded-derivative}
There exists a positive constant $C_4$ such that for every $h\in \ff_\alf$, and every 
$\zeta\in L_h(\Dom L_h)\setminus B(0,1/2)$ we have $1/C_4 \leq |{(L_h^{-1}})'(\zeta)|\leq C_4$. 
Moreover, $L_h'(z)\to 1$, as $\Im z\to \infty$.
\end{lem}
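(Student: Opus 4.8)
The plan is to exploit the functional equation \eqref{equivariance}, $L_h(F_h(w))=L_h(w)+1$, which exhibits $L_h$ as a linearizing coordinate for the near-translation $F_h$, and to transfer estimates through it from a bounded ``central'' region and from the behavior near $\pm\Bi\infty$. Since $L_h$ is univalent (Theorem~\ref{Ino-Shi1}), $(L_h^{-1})'$ is finite and non-zero throughout $L_h(\Dom L_h)$ and can only degenerate as $\zeta$ approaches the boundary. The one boundary point near which it does degenerate is $\zeta=0$: the lift $\cp_{F_h}$ of the critical point $\cp_h$ of $h$ is a critical point of $F_h$ lying on $\partial\Dom L_h$ with $L_h(\cp_{F_h})=0$, and differentiating $F_h=L_h^{-1}\circ(\cdot+1)\circ L_h$ at $\cp_{F_h}$ gives $L_h'(\cp_{F_h})=0$, whence $(L_h^{-1})'(\zeta)\to\infty$ as $\zeta\to 0$ inside $L_h(\Dom L_h)$. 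This is exactly why the ball $B(0,1/2)$ must be excised.

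I first treat the behavior at $+\Bi\infty$, which also yields the ``moreover''. Fix $h\in\fff$ and let $\Im w_0\to+\infty$ in $\Dom L_h$. Put $\widetilde F_{w_0}(w):=F_h(w+w_0)-w_0$; by Lemma~\ref{cyl-cond}(ii) these tend to $w\mapsto w+1$ uniformly on compacta, with error exponentially small in $\alf\Im w_0$. The normalized conjugacy $\widetilde L_{w_0}(w):=L_h(w+w_0)-L_h(w_0)$ of $\widetilde F_{w_0}$ to $w\mapsto w+1$ fixes $0$, so by continuous dependence of linearizing coordinates on the map (the mechanism of Theorem~\ref{Ino-Shi1}(v)) together with normality of the univalent near-translations $\widetilde L_{w_0}$, every limit as $\Im w_0\to+\infty$ conjugates $w\mapsto w+1$ to itself and fixes $0$, hence is the identity; therefore $L_h'(w_0)=\widetilde L_{w_0}'(0)\to 1$. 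The same computation at $-\Bi\infty$ (now $\tau_h(w)\to\sigma_h$, and $|F_h(w)-(w+1)|=O(\alf)$ for $\Im w\ll 0$) shows $L_h'(w)$ tends, uniformly in $h$, to a value within $O(\alf)$ of $1$. Consequently there are $M$ and $C_4'$ so that $1/C_4'\le|(L_h^{-1})'(\zeta)|\le C_4'$ for all $h\in\fff$ and all $\zeta\in L_h(\Dom L_h)$ with $|\Im\zeta|\ge M$, the uniformity coming from the uniform rates above.

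It remains to bound $(L_h^{-1})'$ on $\{|\Im\zeta|\le M\}\cap(L_h(\Dom L_h)\setminus B(0,1/2))$. Write $\zeta=\zeta_0+n$ with $\Re\zeta_0\in[1/2,3/2)$ and $n$ a non-negative integer with $\zeta_0+n\in L_h(\Dom L_h)$, so $n=O(1/\alf)$; by \eqref{equivariance},
\[(L_h^{-1})'(\zeta)=(L_h^{-1})'(\zeta_0)\cdot\prod_{j=0}^{n-1}F_h'\bigl(L_h^{-1}(\zeta_0+j)\bigr).\]
The base factor $(L_h^{-1})'(\zeta_0)$ is uniformly bounded above and below: on the compact set $\{\Re\zeta_0\in[1/2,3/2],\ |\Im\zeta_0|\le M\}$, disjoint from $B(0,1/2)$, this follows from precompactness of $\fff$ (whose closure contains $\ff$) and continuous dependence of $L_h=\Phi_h\circ\tau_h$ on $h$, and for $|\Im\zeta_0|>M$ it was obtained above. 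For the product, put $w_j:=L_h^{-1}(\zeta_0+j)$, the forward $F_h$-orbit of $w_0=L_h^{-1}(\zeta_0)$; since $F_h$ is close to $(\cdot+1)$, the real parts $\Re w_j$ increase by roughly one per step and the imaginary parts stay within $O(1)$ of $\Im w_0$, so $w_j\in\Theta(C_1)$, and away from a uniformly bounded neighborhood of the two ends of the orbit $w_j\in\Theta(r_j/\alf)$ with $r_j$ of order $\alf\min(j,\,1/\alf-j)$. By Proposition~\ref{main-estimate} (equivalently Lemma~\ref{cyl-cond}(ii)) and a Cauchy estimate, $|F_h'(w_j)-1|$ is then bounded by a quantity whose sum over these bulk indices is uniformly bounded --- here one must use the exponential decay in the height together with the cancellation produced by the rotation of $\tau_h$ along the orbit, exactly as in \cite[Subsection~4.3]{Ch10-I} --- while for the remaining, uniformly bounded number of indices near the two ends one uses only the crude bound $|F_h'-1|<1/4$ of Lemma~\ref{cyl-cond}(i), contributing a uniformly bounded factor. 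Hence $\prod_{j=0}^{n-1}|F_h'(w_j)|$ lies between two positive constants independent of $h$ and $n$, and the lemma follows.

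The decisive step, and the expected obstacle, is the uniform control of $\prod_{j=0}^{n-1}F_h'(w_j)$ over orbits of length $n\sim 1/\alf\to\infty$: the estimate $|F_h'-1|<1/4$ only gives bounds exponential in $1/\alf$, so one genuinely needs the optimal, exponentially height-decaying error of Proposition~\ref{main-estimate}, together with the distribution of the orbit $\{w_j\}$ relative to the lattice $\mathbb{Z}/\alf$ and the cancellation this produces in the sum of the errors. This is precisely the kind of estimate whose sharp form is the subject of Section~\ref{S:perturbed-coordinate}, and a detailed treatment of the contraction and summation along the renormalization tower appears in \cite[Subsection~4.3]{Ch10-I}.
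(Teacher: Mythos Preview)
Your argument has two genuine problems.

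\medskip
\textbf{Circularity.} You invoke Proposition~\ref{main-estimate} to control the product $\prod_j F_h'(w_j)$. In this paper Proposition~\ref{main-estimate} is proved only in Section~\ref{S:perturbed-coordinate}, and that proof (through Lemma~\ref{G-properties} and Proposition~\ref{fine-tune}) explicitly uses Lemma~\ref{derivative}. So this reference is not available to you here; Lemma~\ref{derivative} is logically prior to all of the fine estimates on $L_h$.

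\medskip
\textbf{The product is not controlled.} Falling back on Lemma~\ref{cyl-cond}(ii) alone, as you also suggest, does not close the gap. Along the orbit $w_j=L_h^{-1}(\zeta_0+j)$ at bounded height one only gets $|F_h'(w_j)-1|\preceq \alf/r_j$ with $r_j\asymp\alf\min(j,\,1/\alf-j)$, hence
\[
\sum_{j}|F_h'(w_j)-1|\;\asymp\;\sum_{j}\frac{1}{\min(j,\,1/\alf-j)}\;\asymp\;\log\frac{1}{\alf},
\]
which diverges as $\alf\to 0$. Your appeal to ``cancellation produced by the rotation of $\tau_h$'' is asserted, not proved; the dominant contributions come from the two ends of the orbit where $|\tau_h(w_j)|$ is largest, and you give no mechanism for their signs to cancel. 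The reference to \cite[Subsection~4.3]{Ch10-I} concerns contraction through the renormalization tower, not this orbit sum. As you yourself note in the last paragraph, establishing the boundedness of this product is essentially equivalent to the lemma; you have relocated the difficulty, not removed it.

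\medskip
The paper's argument avoids the iteration entirely. The map $L_h^{-1}$ is univalent on $L_h(\Sigma_{C_1})$, so for each $\zeta\in L_h(\Dom L_h)\setminus B(0,1/2)$ one has a disk of fixed radius on which Koebe distortion applies; combined with the single relation $L_h^{-1}(\zeta+1)-L_h^{-1}(\zeta)=F_h(w)-w$, whose modulus lies in $(3/4,5/4)$ by Lemma~\ref{cyl-cond}(i), this pins $|(L_h^{-1})'(\zeta)|$ uniformly between two constants. Compactness of $\fff$ supplies the uniformity near the normalizing point and handles the limit $L_h'\to 1$ at $+\Bi\infty$. No long product, no cancellation is needed.
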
    
\begin{proof}
This follows from the the Koebe distortion Theorem, and the compactness of the class $\ff$.
\end{proof}
\subsection{The model} 
Given $A\in \mathbb{C}$, let $\K_h$ denoted the domain (with asymptotic width one) bounded by the curves 
$A+t\Bi$, $F_h(A+t\Bi)$, $(1-s)A+sF_h(A)$, for $t\in [0,\infty)$ and $s\in [0,1]$.
Moreover, assume that $\K_h$ is contained in $\Theta(C_1+1)\cap \Theta(r/\alf)$. 

For $(s,t) \in [0,1]\times [0,\infty) $ define the map 
\[H(s,t):=A+ \int_{0}^s X(\ell,t) \, d\ell+\Bi \big( t+ \int_{0}^s Y(\ell,t) \, d\ell\big ) ,\]
where $X$ and $Y$ are given as
\begin{gather*}  
X(s,t):=a_0(t)+a_1(t) \sin(\pi s)+a_2(t) \cos(\pi s) +a_3(t) \sin (2\pi s)+ a_4(t) \cos (2\pi s), \\
Y(s,t):=b_0(t)+b_1(t) \sin(\pi s)+b_2(t) \cos(\pi s) +b_3(t) \sin (2\pi s)+b_4(t) \cos (2\pi s),
\end{gather*}
with
\begin{align*}
a_0(t)&=\Re (F_h(A+\Bi t)-A)+\Re F_h''(A+\Bi t)/\pi,&&\\
b_0(t)&=\Im(F_h(A+\Bi t)-A)-t+\Im (F_h''(A+\Bi t))/\pi,\!\!\!\!&&\\
a_1(t)&=-\Re F_h''(A+\Bi t)/2, &&   b_1(t)=-\Im F_h''(A+\Bi t)/2, \\
a_2(t)&=(1-\Re F_h'(A+\Bi t))/2, && b_2(t)=-\Im F_h'(A+\Bi t)/2,\\ 
a_3(t)&=\Re F_h''(A+\Bi t)/4, &&  b_3(t)=\Im F_h''(A+\Bi t)/4, \\
a_4(t)&=1-a_0(t)-a_2(t), &&  b_4(t)=-a_0(t)-a_2(t).\\    
\end{align*}
(The above coefficients are obtained from solving a separable partial differential 
equation so that the next lemma holds.)
 
 Instead of algebraically manipulating constants and frequently introducing new ones, we use the 
following convention from now on. 
Given real valued functions $f$ and $g$ defined on a set $W$, we write 
\[ f(x)\preceq g(x) \text{  on } W \] 
to mean that there exists a constant $M$ with $f(x)\leq M g(x)$ for all $x\in W$. 
For example, by Lemma~\ref{cyl-cond} and Cauchy Integral Estimate, on the set 
$\Theta(r/\alf)\cap \Theta(C_1+1)$ we have  
\begin{equation}\label{F-derivatives}
\max \big\{ |F_h(w)-w-1|, |F_h'(w)-1|,|F_h''(w)|,|F_h^{(3)}(w)|,|F_h^{(4)}(w)|\big\}
\preceq\frac{\alf}{r} e^{-2\pi\alf\Im w}.
\end{equation}

Using the relation $H(s+1,t)=F_h(H(s,t))$ we extend $H$ onto a neighborhood of the form
$(-\delta,1+\delta)\times (0,\infty)$, for some $\delta>0$.

\begin{lem}\label{H-properties}  
The map $H$ satisfies the following properties:
\begin{itemize}
\item[i.] For every $t\in (0,\infty)$ and $s\in (-\delta, \delta)$ we have \[F(H(s,t))=H(s+1,t).\]
\item[ii.] $H$ is $C^2$ on $(-\delta,1+\delta)\times (0,\infty)$. 
\item[iii.] On $(-\delta,1+\delta)\times (0, \infty)$, we have
\[\max\big\{|\partial_s H(s,t)-1|, |\partial_t H(s,t)-\Bi|\big\} \preceq\frac{\alf}{r} e^{-2\pi \alf (t+\Im A)},\]
and
\[\max\big\{|\partial_{ss} H(s,t)|, |\partial_{tt} H(s,t)|, |\partial_{st} H(s,t)|\big\}
 \preceq\frac{\alf}{r} e^{-2\pi \alf (t+\Im A)}.\]
\end{itemize} 
\end{lem}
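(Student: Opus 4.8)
\emph{Part (i).} The plan is to treat the three assertions in turn, (i) and (ii) being essentially forced by the construction and (iii) coming from feeding \eqref{F-derivatives} into the explicit formulas for the $a_i,b_i$. For (i): the integral formula defines $H$ on $[0,1]\times(0,\infty)$ with $H(0,t)=A+\Bi t$, and using $\int_0^1\sin(\pi\ell)\,d\ell=2/\pi$ and $\int_0^1\cos(\pi\ell)\,d\ell=\int_0^1\sin(2\pi\ell)\,d\ell=\int_0^1\cos(2\pi\ell)\,d\ell=0$ together with the definitions of $a_0,a_1,b_0,b_1$ one checks directly that $H(1,t)=F_h(A+\Bi t)=F_h(H(0,t))$. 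Since $F_h$ is univalent on $\Theta(C_1)$ (Lemma~\ref{cyl-cond}-i) and $\K_h\subset\Theta(C_1+1)$, a small neighborhood of $\K_h$ sits inside $\Theta(C_1)$, so for $\delta$ small one extends $H$ to $(-\delta,1+\delta)\times(0,\infty)$ by $H(s,t):=F_h(H(s-1,t))$ on $[1,1+\delta)$ and $H(s,t):=F_h^{-1}(H(s+1,t))$ on $(-\delta,0]$; the identity $H(1,t)=F_h(A+\Bi t)$ makes these fit together continuously, and $F_h(H(s,t))=H(s+1,t)$ on $(-\delta,\delta)\times(0,\infty)$ is then immediate.

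\emph{Part (ii).} On each of the open strips $(-\delta,0)$, $(0,1)$, $(1,1+\delta)$ times $(0,\infty)$ the map $H$ is real-analytic: on the middle one because $X,Y$ are trigonometric polynomials in $s$ whose coefficients are real and imaginary parts of $F_h,F_h',F_h''$ along the real-analytic curve $t\mapsto A+\Bi t$ with $F_h$ holomorphic on $\Theta(C_1)$, and on the outer two because they are images of the middle strip under the biholomorphisms $F_h^{\pm1}$. So only the seams $s\in\{0,1\}$ need attention, and because the outer pieces are post-compositions of the middle one with $F_h^{\pm1}$, a $C^2$-matching at $s=1$ is equivalent to one at $s=0$; I would verify the latter. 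From $H(0,t)=A+\Bi t$ one has $\partial_t H(0,t)=\Bi$ and $\partial_{tt} H(0,t)=0$, and the choices of $a_4,b_4$ (resp.\ of $a_1,a_3,b_1,b_3$) are exactly what make $\partial_s H(0,t)=1$ (resp.\ $\partial_{ss} H(0,t)=0$). The requirement that $H$ be $C^2$ across $s=0$ then reduces, once one notes that the conditions involving a $t$-derivative follow by differentiating the pure $s$-derivative ones in $t$, to the three identities $H(1,t)=F_h(A+\Bi t)$, $\partial_s H(1,t)=F_h'(A+\Bi t)$ and $\partial_{ss} H(1,t)=F_h''(A+\Bi t)$; evaluating $\partial_s H(1,t)=X(1,t)+\Bi Y(1,t)$ and $\partial_{ss} H(1,t)=\partial_s X(1,t)+\Bi\,\partial_s Y(1,t)$ at $s=1$ turns these into precisely the defining relations for $a_2,b_2$ and for $a_1,a_3,b_1,b_3$, which hold by construction. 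This is the one step that is a genuine computation rather than a soft argument, and it is where the unusual shape of the coefficients gets explained: they were reverse-engineered exactly so that these seam identities hold.

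\emph{Part (iii).} By hypothesis $\K_h\subset\Theta(r/\alf)\cap\Theta(C_1+1)$, so the boundary ray $\{A+\Bi t:t\ge0\}$ of $\K_h$ lies in $\Theta(r/\alf)\cap\Theta(C_1+1)$ and \eqref{F-derivatives} bounds $|F_h(A+\Bi t)-(A+\Bi t)-1|$, $|F_h'(A+\Bi t)-1|$ and $|F_h^{(k)}(A+\Bi t)|$ for $k=2,3,4$, all by $\preceq\frac{\alf}{r}e^{-2\pi\alf(t+\Im A)}$. Matching this term by term against the formulas for the coefficients gives $|a_0(t)-1|$ and, for the remaining indices, $|a_i(t)|,|b_i(t)|$, and---since $\partial_t$ of the real or imaginary part of $F_h^{(k)}(A+\Bi t)$ is $\pm$ the imaginary or real part of $F_h^{(k+1)}(A+\Bi t)$, with the $-t$ term in $b_0$ absorbed by the leading part of $\partial_t\Im(F_h(A+\Bi t)-A)$---also the first and second $t$-derivatives of every coefficient, all $\preceq\frac{\alf}{r}e^{-2\pi\alf(t+\Im A)}$. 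Substituting into $\partial_s H=X+\Bi Y$, $\partial_{ss} H=\partial_s X+\Bi\,\partial_s Y$, $\partial_{st} H=\partial_t X+\Bi\,\partial_t Y$, $\partial_t H=\Bi+\int_0^s(\partial_t X+\Bi\,\partial_t Y)\,d\ell$ and $\partial_{tt} H=\int_0^s(\partial_{tt} X+\Bi\,\partial_{tt} Y)\,d\ell$---the trigonometric factors being bounded, the integrals over $[0,s]\subseteq[0,1]$---gives both inequalities on $(0,1)\times(0,\infty)$, with constants depending only on the Inou--Shishikura class. On the outer strips $H(s,t)=F_h^{\pm1}(H(s\mp1,t))$ with $H(s\mp1,t)\in\K_h$ of imaginary part within an absolute constant of $t+\Im A$, and since $|F_h'(w)-1|,|F_h''(w)|,|F_h^{(3)}(w)|\preceq\frac{\alf}{r}e^{-2\pi\alf\Im w}$ for $w\in\K_h$ by \eqref{F-derivatives} (and likewise for $F_h^{-1}$ by inversion), the chain rule propagates the bounds. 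Apart from the seam computation in Part (ii), everything here is routine bookkeeping with \eqref{F-derivatives} and Lemma~\ref{cyl-cond}, so that is the step I expect to be the main obstacle.
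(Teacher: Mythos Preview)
Your argument is correct and follows the same route as the paper's own proof: verify $H(0,t)=A+\Bi t$ and $H(1,t)=F_h(A+\Bi t)$ directly from the integrals, observe real-analyticity off the seams and check the $C^2$ matching there by hand, and then feed \eqref{F-derivatives} into the explicit formulas for the $a_j,b_j$ (and their $t$-derivatives) to bound $X-1$, $Y$ and their partials. The only difference is one of explicitness: where the paper writes ``a straightforward calculation shows that it is $C^2$ on the lines $\{0,1\}\times(0,\infty)$'' and leaves Part~(iii) on the outer strips implicit, you spell out the seam identities $\partial_s H(1,t)=F_h'(A+\Bi t)$, $\partial_{ss} H(1,t)=F_h''(A+\Bi t)$ and propagate the bounds through $F_h^{\pm1}$ by the chain rule.
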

\begin{proof}
Clearly $H(0,t)=A+t\Bi$. 
On the other hand
\begin{align*}
H(1,t)&=A+a_0(t)+2a_1(t)/\pi + (t+b_0(t)+2b_1(t)/\pi)\Bi\\
&=F(A+t\Bi)
\end{align*}
For other values of $s$, the relation follows from the definition.

The map $H$ is real analytic in the domain  
$(-\delta,1+\delta)\times(0,\infty)\setminus\{0,1\}\times(0,\infty)$.
A straightforward calculation shows that it is $C^2$ on the lines $\{0,1\}\times(0,\infty)$.

From Inequality \eqref{F-derivatives}, on $[0,\infty)$ we have 
\begin{gather*}
|a_0(t)-1|\preceq \frac{\alf}{r} e^{-2\pi \alf (t+\Im A)},\\
|a_j(t)| \preceq \frac{\alf}{r} e^{-2\pi \alf (t+\Im A)}, \text{ for } j=1,2,3,4, \\
|b_j(t)| \preceq \frac{\alf}{r} e^{-2\pi \alf (t+\Im A)}, \text{ for } j=0,1,2,3,4.
\end{gather*}
Similar estimates hold for the first and second derivatives of these functions. 
This implies that on $[0,1]\times (0, \infty)$
\[\max \{ |X-1|, |\partial_s X|, |\partial_t X|, |\partial_{tt}X|, |Y|,|\partial_t Y|,|\partial_{tt} Y|\} (s,t)
\preceq \frac{\alf}{r} e^{-2\pi \alf (t+\Im A)}.\]
One infers the inequalities in Part iii from these estimates.
\end{proof}

To simplify the calculations, from now on we use the complex notation $z=s+\Bi t$, $dz=ds+\Bi dt$, 
$d\bar{z}=ds-\Bi dt$, $\partial_z=(\partial_s-\Bi \partial_t)/2$, 
$\partial_{\bar{z}}=(\partial_s+\Bi\partial_t)/2$.  

To compare $L_h^{-1}$ to $H$ we work on the map 
\[G:= L_h\circ H:(-\delta,1+\delta)+ \Bi(0,\infty) \to \cc.\] 
In particular, we would like to prove that 
\begin{equation}\label{G-zz}
|\partial_{z} G(z)-1| \preceq \frac{1}{r} e^{-2\pi \alf \Im z}.
\end{equation}

\begin{lem}\label{G-properties}
The map $G$ has a $C^2$ extension onto $\cc$ which satisfies 
\begin{equation}\label{E:periodic}
G(z+1)=G(z)+1,\quad  \forall z\in \cc,
\end{equation}
and is the translation by $G(0,0)$ on $\{z\mid \Im z\leq -1\}$.
\begin{itemize}
\item[i.] On $\{z\mid \Im z\geq 0\}$,  
\[\max\{|\partial_{\bar{z}}G(z)|, |\partial_{z\bar{z}} G(z)|\}
\preceq  \frac{\alf}{r} e^{-2\pi \alf \Im (z+A)},\]
and on $\{z\mid \Im z \in [-1,0]\}$,  
\[\max\{|\partial_{\bar{z}}G(z)|, |\partial_{z\bar{z}} G(z)|\}\preceq 1.\]
\item[ii.] There exists a constant $z_0$ such that  as $\Im z\to \infty$, $G(z)-z \to z_0$.
\item[iii] For every constant $\delta\in \mathbb{R}$ there is a constant $C_7$ independent of $r$ such that if $\K_h\subseteq \Theta(r/\alf)$ and $\liminf_{w\in \K_h} \Im w\geq \delta/\alf$ then for every 
$z_1,z_2\in \Dom G$,
\[|G(z_2)-G(z_1)-(z_2-z_1)|\leq C_7/r.\]
\end{itemize}
\end{lem}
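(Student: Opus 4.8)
The whole argument rests on one structural remark: since $L_h$ is holomorphic, $G=L_h\circ H$ carries the \emph{same} anti-holomorphic defect as $H$ (up to the bounded factor $L_h'\circ H$), and Lemma~\ref{H-properties}(iii) already estimates the defect of $H$; the periodicity and the $C^2$ extension are then formal, while (ii) and (iii) come out of integrating the pointwise estimates of (i) along vertical lines.

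\emph{Periodicity and extension.} On the enlarged strip $(-\delta,1+\delta)+\Bi(0,\infty)$ one has $G(z+1)=L_h\bigl(H(s+1,t)\bigr)=L_h\bigl(F_h(H(s,t))\bigr)=G(z)+1$ by Lemma~\ref{H-properties}(i) and the Abel equation \eqref{equivariance}; as $G$ is $C^2$ there by Lemma~\ref{H-properties}(ii), periodic continuation produces a $C^2$ map on $\{\Im z>0\}$ satisfying \eqref{E:periodic}, with $C^2$ boundary values on $\{\Im z=0\}$ (the $L_h$-image of the bottom edge of $\K_h$). I then extend $G$ to $\cc$ by interpolating, through a fixed $C^2$ cut-off equal to $1$ near $\Im z=0$ and to $0$ near $\Im z=-1$ (with vanishing first and second derivatives at both ends), between a $C^2$ extension of $G|_{\{\Im z\ge 0\}}$ and the translation $z\mapsto z+G(0,0)$, where $G(0,0)=L_h(A)$; this keeps $G$ of class $C^2$ and $1$-periodic, matches $G$ at $\Im z=0$, and equals the translation by $G(0,0)$ on $\{\Im z\le-1\}$.

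\emph{Part (i).} On $\{\Im z>0\}$ the Wirtinger chain rule for the holomorphic $L_h$ gives $\partial_{\bar z}G=(L_h'\circ H)\,\partial_{\bar z}H$ and $\partial_{z\bar z}G=(L_h''\circ H)(\partial_z H)(\partial_{\bar z}H)+(L_h'\circ H)\,\partial_{z\bar z}H$. Writing $\partial_{\bar z}H=\tfrac12(\partial_s H-1)+\tfrac{\Bi}{2}(\partial_t H-\Bi)$, $\partial_z H-1=\tfrac12(\partial_s H-1)-\tfrac{\Bi}{2}(\partial_t H-\Bi)$, and $\partial_{z\bar z}H=\tfrac14(\partial_{ss}H+\partial_{tt}H)$, Lemma~\ref{H-properties}(iii) gives $|\partial_{\bar z}H|,\,|\partial_z H-1|,\,|\partial_{z\bar z}H|\preceq\frac{\alpha}{r}e^{-2\pi\alpha\Im(z+A)}$, hence $|\partial_z H|\preceq1$. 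Since $H$ maps $\{\Im z>0\}$ into $\Dom L_h$, clear of $B(0,1/2)$ (the image $G(\{\Im z>0\})$ stays away from $0=L_h(\cp_{F_h})$ because $\K_h$ keeps a definite distance from $\cp_{F_h}$, by Lemma~\ref{cyl-cond}) and a definite distance inside the univalence set $\Sigma_{C_1}$, Lemma~\ref{bounded-derivative} bounds $|L_h'\circ H|$ and a Cauchy estimate bounds $|L_h''\circ H|$; substituting, $|\partial_{\bar z}G|,\,|\partial_{z\bar z}G|\preceq\frac{\alpha}{r}e^{-2\pi\alpha\Im(z+A)}$ on $\{\Im z\ge 0\}$. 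On $\{-1\le\Im z\le0\}$, $G$ is the interpolation above, so $\partial_{\bar z}G$ and $\partial_{z\bar z}G$ are bounded combinations, through the fixed cut-off, of the first two derivatives of the $C^2$ extension of $G|_{\{\Im z\ge0\}}$ and of the translation --- all $\preceq1$ by Lemma~\ref{bounded-derivative} and Lemma~\ref{H-properties} --- so $|\partial_{\bar z}G|,\,|\partial_{z\bar z}G|\preceq1$ there.

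\emph{Parts (ii)--(iii), and the hard point.} Set $R:=G-\mathrm{id}$; by \eqref{E:periodic} it is $1$-periodic in $\Re z$, and $\partial_{\bar z}R=\partial_{\bar z}G$ decays by (i). The decisive ingredient is the companion estimate \eqref{G-zz} for $\partial_z G-1$: from (i), $\partial_z G-1=(L_h'\circ H)(\partial_z H-1)+(L_h'\circ H-1)$ and the first summand decays, so what is needed is $|L_h'(w)-1|\preceq\tfrac1r e^{-2\pi\alpha\Im w}$ on the range swept out by $H$; I would obtain this by differentiating the Abel equation \eqref{equivariance} to $L_h'(F_h(w))F_h'(w)=L_h'(w)$ and telescoping along the forward $F_h$-orbit of $w$ --- which by Lemma~\ref{cyl-cond}(i) drifts rightward by roughly one per step at essentially constant imaginary part --- estimating each factor through \eqref{F-derivatives} and using $L_h'\to1$ (Lemma~\ref{bounded-derivative}) at the far end, the finitely many orbit points straying within $r/\alpha$ of $\mathbb{Z}/\alpha$ being absorbed through the crude bound $|F_h'-1|<1/4$ of Lemma~\ref{cyl-cond}(i). \emph{This transfer of control from the anti-holomorphic defect to $\partial_z G-1$ is the one substantial step; the rest is integration.} Granting it, $\partial_t R=\Bi(\partial_z G-1)-\Bi\,\partial_{\bar z}G$ and $\partial_s R=(\partial_z G-1)+\partial_{\bar z}G$ decay like $\tfrac1r e^{-2\pi\alpha\Im(z+A)}$ on $\{\Im z\ge0\}$, so $\int_{\Im z}^{\infty}|\partial_t R(\Re z+\Bi t')|\,dt'$ converges; hence $R(z)$ tends to a limit $z_0$ along each vertical line, independent of $\Re z$ because $\partial_s R\to0$, which is (ii). For (iii) I would show $\sup_{z\in\cc}|R(z)-z_0|\preceq 1/r$ directly: on $\{\Im z\ge0\}$ this is $\le\int_{0}^{\infty}\tfrac1r e^{-2\pi\alpha(t'+\Im A)}\,dt'=\tfrac{1}{2\pi\alpha r}e^{-2\pi\alpha\Im A}$, which under the hypothesis $\liminf_{w\in\K_h}\Im w\ge\delta/\alpha$ (hence $\Im A\ge\delta/\alpha-O(1)$) is at most $C_7/r$ with $C_7$ independent of $r$ and $h$; on $\{\Im z\le-1\}$, $R\equiv G(0,0)$ and $|G(0,0)-z_0|=|R(0)-z_0|$ is already bounded in this way; and on $\{-1\le\Im z\le0\}$, $R(z)-z_0$ is a cut-off combination of $G(0,0)-z_0$ and of $R(\zeta)-z_0$ for $\zeta$ near $\{\Im z=0\}$, both $\le C_7/r$. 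Therefore $|G(z_2)-G(z_1)-(z_2-z_1)|=|R(z_2)-R(z_1)|\le|R(z_2)-z_0|+|R(z_1)-z_0|\le 2C_7/r$.
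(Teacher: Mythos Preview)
Your treatment of the periodicity, the $C^2$ extension, and Part~(i) is essentially the paper's argument. The divergence is in Parts~(ii)--(iii), and there is a genuine gap.

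You propose to first establish the estimate \eqref{G-zz}, i.e.\ $|L_h'(w)-1|\preceq \tfrac{1}{r}e^{-2\pi\alpha\Im w}$, by differentiating the Abel relation to $L_h'(w)=L_h'(F_h^{n}(w))\prod_{j=0}^{n-1}F_h'(F_h^{j}(w))$ and letting the orbit run ``to the far end'', invoking $L_h'\to 1$ there. But the forward $F_h$-orbit of $w$ drifts \emph{horizontally} at essentially constant imaginary part; after $O(1/\alpha)$ steps it reaches the right edge of $\Dom L_h$ and you must stop. At that endpoint nothing is known about $L_h'$ beyond the uniform bound of Lemma~\ref{bounded-derivative}; the statement $L_h'\to 1$ in that lemma is as $\Im z\to\infty$, not along horizontal orbits, and it comes with no rate. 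So the telescope gives only $|L_h'(w)-L_h'(F_h^{n}(w))|\preceq \tfrac{1}{r}e^{-2\pi\alpha\Im w}$, which does not bound $|L_h'(w)-1|$. In effect you are assuming the content of Proposition~\ref{fine-tune} (the main estimate of the section) inside the proof of the lemma that is meant to set it up.

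The paper sidesteps this entirely. For Part~(ii) it never uses \eqref{G-zz}: it applies Green's formula to $\int_{\partial\mathcal D}G\,dz=\iint_{\mathcal D}\partial_{\bar z}G\,d\bar z\,dz$ over rectangles $\mathcal D=[0,1]\times[t_1,t_2]$, where the double integral is controlled by the $\partial_{\bar z}G$ decay already obtained in Part~(i), and the boundary terms collapse by periodicity and the purely qualitative fact $\partial_z G\to 1$ (from $L_h'\to 1$ and Lemma~\ref{H-properties}). Part~(iii) then follows from the same Green computation once $\Im A\ge\delta/\alpha$ bounds the full integral $\int_0^\infty\tfrac{\alpha}{r}e^{-2\pi\alpha(t+\Im A)}\,dt\preceq 1/r$. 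The quantitative bound \eqref{G-zz} is proved \emph{afterwards}, via Lemma~\ref{K-properties} and the Cauchy integral formula on the projection $K=\phi\circ G\circ\psi$; that is the substantial analytic step, and it is not captured by your telescoping.
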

\begin{proof}
From Lemma~\ref{H-properties}-i and functional equation \eqref{equivariance}, 
the Relation~\eqref{E:periodic} holds for every $z$ with $\Re z\in (-\delta,\delta)$ and $\Im z>0$. 
Moreover, it is $C^2$ as $H$ is $C^2$ smooth and $L_h$ is analytic. 
We can use \eqref{E:periodic} to extend $G$ onto the upper half plane.
 
Below the line $\Im z=-1$ we define $G(z):=z+G(0,0)$. 
Then, there exists a $C^2$ smooth interpolation of $G$ on the strip $\Im z\in [-1,0]$  that satisfies \eqref{E:periodic}, 
and has uniformly bounded first and second order partial derivatives.    
(For example, one may obtain this interpolation by first $C^1$ gluing the vector field $\partial G/\partial s$ 
over the upper half plane to the constant vector field 1 below the line $\Im z=-1$, and then integrate this 
vector field.)
\medskip

{\em Part i:}
By complex chain rule, with $w=H(z)$, we have 
\begin{align*}
|\partial_{\bar{z}} G(z)|&=|\partial_w L_h (H(z))\cdot \partial_{\bar{z}} H(z)|\\
&\preceq \frac{\alf}{r} e^{-2\pi \alf \Im (z+A)},
\end{align*}
by Lemmas~\ref{derivative} and \ref{H-properties}-iii.

On the other hand, Lemma~\ref{derivative} and the Cauchy Integral Formula imply that $|\partial _{ww} L_h|$ 
is uniformly bounded well inside the domain of $L_h$. 
Therefore, from the estimates in Lemma~\ref{H-properties}, and the complex chain rule
\begin{align*}
|\partial_{z\bar{z}} G(z)|&=|\partial_{ww}L_h(H(z))\cdot \partial_z H(z)\cdot \partial_{\bar{z}} H(z)+ 
\partial_w L_h( H(z))\cdot \partial_{z\bar{z}} H(z)|\\
&\preceq \frac{\alf}{r} e^{-2\pi \alf \Im (z+A)}.
\end{align*}
\medskip

{\em Part ii:}
Since $G$ is periodic of period one, it is enough to prove the asymptotic behavior at $z$ with 
$\Re z\in [0,1]$. 
In this region, it is enough to prove that for every $\eps>0$ there is $R\in \mathbb{R}$ such that if 
$\Im z_1$ and $\Im z_2$ are larger than $R$ then $|(G(z_2)-z_2)-(G(z_1)-z_1)|$ is less than $\eps$. 

Given real constants $t_2>t_1$, define 
$\D:=\{z\in \mathbb{C}\mid 0\leq \Re z \leq 1,  t_1\leq \Im z \leq t_2\}$. 
By Green's Integral Formula, we have 
\begin{align} \label{Green}
\int_{\partial \mathcal{D}} G(z)\, dz=\iint_{\mathcal{D}} \partial_{\bar{z}} G(z) \,d\bar{z} dz.
\end{align}

The right-hand integral is bounded by 
\begin{align*}
\Big|\iint_\mathcal{D} \partial_{\bar{z}} G(z)  \, d z d\bar{z}\Big| &
\leq 2\int_{t_1}^{t_2}\!\!\! \int_0^1 |\partial_{\bar{z}} G(s,t)|\, ds dt  &   \\
&\preceq\int_{t_1}^{t_2}\!\frac{\alf}{r} e^{-2\pi\alf (t+\Im A)}\,ds dt &\text{(Lemma~\ref{G-properties}-ii)} \\
& \leq \frac{e^{-2\pi \alf \Im A}}{2\pi r} e^{-2\pi \alf t_1}.
\end{align*}
This bound tends to zero, as $t_1$ goes to infinity.

By periodicity of $G$, the left hand side of \eqref{Green} is equal to  
\begin{eqnarray*}
-\Bi(t_2-t_1)+\int_{\gamma_1} G(z)\,d s + \int_{\gamma_2} G(z)\,d s,
\end{eqnarray*}
where $\gamma_1$ and $\gamma_2$ are the top and bottom boundaries of $\D$ with appropriate 
orientations. 
By Lemmas~\ref{derivative} and \ref{H-properties}-iii, as $\Im z\to \infty$, $\partial_z G(z)\to 1$.
Hence 
\[ \int_{\gamma_1} G(z)\,d s\to -\int_0^1 G(t_1\Bi)+1\cdot s \,d s=-G(t_1\Bi)-1/2\]
and 
\[ \int_{\gamma_2} G(z)\,d s\to \int_0^1 G(t_2\Bi)+1\cdot s \,d s= G(t_2\Bi)+1/2.\]

Putting these together, we conclude that as 
$\Im z_1, \Im z_2 \to \infty$, 
\begin{multline*}
|(G(z_2)-z_2)- (G(z_1)-z_1)| \\  \to  |(G(\Bi \Im z_2)-\Bi\Im z_2)- (G(\Bi\Im z_1)-\Bi\Im z_1)| \to 0.
\end{multline*}
This finishes the proof of this part.
\medskip

{\em Part iii:}
This follows from the above relations and inequalities once we have a lower bound on $Im z_1$ and $\Im z_2$. 
(See \cite{Ch10-I}, proof of Lemma 5.4 for further details.)
\end{proof}

The map $G$ projects via $e^{2\pi\Bi z}$ to a well-defined map on $\mathbb{C}$.  
More precisely, let 
\[\phi(z):= e^{2\pi\Bi z}, \psi(\zeta):= \phi^{-1}(\zeta)=\frac{1}{2\pi \Bi}\log \zeta, \text{ with }
\Im \log(\cdot)\in [0,2\pi), \] 
and define   
\[K(\zeta):=\phi \circ  G \circ \psi(\zeta).\]
The map $K$ has  continuous extension to $0$; $K(0)=0$.

\begin{lem}\label{K-properties}
The map $K:\mathbb{C}\to \mathbb{C}$ is complex differentiable at $0$,  and 
\[|\partial_\zeta K(\zeta)-\partial_\zeta K(0)|\preceq \frac{1}{r} |\zeta|^\alf.\]
\end{lem}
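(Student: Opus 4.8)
The plan is to push everything down to the cylinder and reduce to a local estimate near $0$. Write $\tilde g(\zeta):=G(\psi(\zeta))-\psi(\zeta)$. By the $1$-periodicity of $G$ (Lemma~\ref{G-properties}, Equation~\eqref{E:periodic}) this descends to a well-defined $C^2$ map on a punctured disc $B(0,\rho)\setminus\{0\}$ for any fixed $\rho\in(0,1)$, and by Lemma~\ref{G-properties}-ii it extends continuously to $0$ with $\tilde g(0)=z_0$. Since $e^{2\pi\Bi\psi(\zeta)}=\zeta$, we get the closed form
\[K(\zeta)=\phi\big(\psi(\zeta)+\tilde g(\zeta)\big)=\zeta\, e^{2\pi\Bi\tilde g(\zeta)},\]
so $K(0)=0$ and $K(\zeta)/\zeta\to e^{2\pi\Bi z_0}=:\lambda$; in particular $K$ is complex differentiable at $0$ with $\partial_\zeta K(0)=\lambda$. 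Differentiating the closed form,
\[\partial_\zeta K(\zeta)-\lambda=\big(e^{2\pi\Bi\tilde g(\zeta)}-e^{2\pi\Bi z_0}\big)+2\pi\Bi\,\zeta\, e^{2\pi\Bi\tilde g(\zeta)}\,\partial_\zeta\tilde g(\zeta),\]
so the lemma reduces to the two bounds $|\tilde g(\zeta)-z_0|\preceq\frac1r|\zeta|^\alf$ and $|\partial_\zeta\tilde g(\zeta)|\preceq\frac1r|\zeta|^{\alf-1}$ on $B(0,\rho/2)$; on $\{|\zeta|\geq\rho/2\}$ the asserted inequality is automatic, since there $|\zeta|^\alf$ is bounded below, $\partial_\zeta K$ is bounded, and $r\leq 1/2$.

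Next I would extract the infinitesimal data of $\tilde g$ from the estimates on $G$. As $\psi$ is holomorphic with $|\psi'(\zeta)|=(2\pi|\zeta|)^{-1}$ and $\Im\psi(\zeta)=\frac1{2\pi}\log\frac1{|\zeta|}$, the chain rule gives $\partial_{\bar\zeta}\tilde g(\zeta)=(\partial_{\bar z}G)(\psi(\zeta))\,\overline{\psi'(\zeta)}$, with one further power of $\psi'$ (resp.\ $\psi''$) appearing under a further $\partial_\zeta$ (resp.\ $\partial_{\bar\zeta}$). Using Lemma~\ref{G-properties}-i — together with the observation that $G=L_h\circ H$ with $L_h$ holomorphic of uniformly bounded first and second derivative (Lemma~\ref{derivative} and Cauchy's estimate) and $H$ as in Lemma~\ref{H-properties}, which also yields $|\partial_{\bar z\bar z}G(z)|\preceq\frac{\alf}{r}e^{-2\pi\alf\Im(z+A)}$ on $\{\Im z\geq 0\}$ — and absorbing the bounded factor $e^{-2\pi\alf\Im A}$ into the constants, one gets for $\zeta\neq0$
\[|\partial_{\bar\zeta}\tilde g(\zeta)|\preceq\tfrac1r|\zeta|^{\alf-1},\qquad |\nabla\,\partial_{\bar\zeta}\tilde g(\zeta)|\preceq\tfrac1r|\zeta|^{\alf-2}.\]
Now apply the Cauchy--Pompeiu formula on $B(0,\rho)$: on $B(0,\rho/2)$ one has $\tilde g=h_0+T\big(\partial_{\bar\xi}\tilde g\cdot\mathbf{1}_{B(0,\rho)}\big)$, where $Tf(\zeta)=-\frac1\pi\iint\frac{f(\xi)}{\xi-\zeta}\,dA(\xi)$ and $h_0$ is holomorphic with bounded derivatives on $B(0,\rho/2)$. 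Subtracting the value at $0$ and using the first displayed bound together with the elementary scaling estimate $\iint_{B(0,\rho)}|\xi|^{\alf-2}|\xi-\zeta|^{-1}\,dA(\xi)\preceq|\zeta|^{\alf-1}$ gives $|\tilde g(\zeta)-z_0|\preceq\frac1r|\zeta|^{\alf}$, the first of the two required bounds.

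For the derivative, differentiate Cauchy--Pompeiu to get $\partial_\zeta\tilde g=h_0'+S\big(\partial_{\bar\xi}\tilde g\cdot\mathbf{1}_{B(0,\rho)}\big)$, with $Sf(\zeta)=-\frac1\pi\,\mathrm{p.v.}\iint\frac{f(\xi)}{(\xi-\zeta)^2}\,dA(\xi)$ the Beurling--Ahlfors transform. The crux is a weighted pointwise estimate: if $\mu$ is supported in $B(0,\rho)$ with $|\mu(\xi)|\preceq\frac1r|\xi|^{\alf-1}$ and $|\nabla\mu(\xi)|\preceq\frac1r|\xi|^{\alf-2}$ for $\xi\neq0$, and $\alf-1$ lies strictly between $-1$ and $0$, then $|S\mu(\zeta)|\preceq\frac1r|\zeta|^{\alf-1}$. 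I would prove this by splitting $B(0,\rho)$ into $\{|\xi|<|\zeta|/2\}$, the disc $B(\zeta,|\zeta|/2)$, and the remainder: on the first and last regions the size bound on $\mu$ is integrated directly, while on $B(\zeta,|\zeta|/2)$ one writes $\mu(\xi)=\mu(\zeta)+(\mu(\xi)-\mu(\zeta))$, uses $\mathrm{p.v.}\iint_{B(\zeta,|\zeta|/2)}(\xi-\zeta)^{-2}\,dA=0$, and bounds the remainder via the Lipschitz estimate $|\mu(\xi)-\mu(\zeta)|\preceq\frac1r|\zeta|^{\alf-2}|\xi-\zeta|$ valid there. Since moreover $|h_0'|\preceq1\leq\frac1r|\zeta|^{\alf-1}$ (as $r\leq1/2$ and $|\zeta|^{\alf-1}\geq1$ for $|\zeta|\leq1$), this yields $|\partial_\zeta\tilde g(\zeta)|\preceq\frac1r|\zeta|^{\alf-1}$. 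Feeding the two bounds into the displayed identity for $\partial_\zeta K(\zeta)-\lambda$, and using that $\Im\tilde g$ — hence $|e^{2\pi\Bi\tilde g}|$ — is bounded on $B(0,\rho/2)$ (from $\tilde g(0)=z_0$ and the first bound), finishes the proof. The only step requiring genuine work is this weighted Beurling estimate near the singular point $0$; everything else is chain-rule bookkeeping with estimates already in hand.
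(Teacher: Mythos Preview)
Your argument is correct and complete, but takes a genuinely different route from the paper's.

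The paper works directly with $K$: it computes the Laplacian $\partial_{\zeta\bar\zeta}K$ via the chain rule, bounds it by $\frac{\alf}{r}|\zeta|^{\alf-1}$, and then applies the Cauchy--Pompeiu formula \emph{once}, to the function $\partial_\zeta K$. Subtracting the representation at $\zeta_0$ and at $0$ produces the single singular integral $\iint \partial_{\zeta\bar\zeta}K(\zeta)\,\frac{\zeta_0}{(\zeta-\zeta_0)\zeta}\,d\zeta d\bar\zeta$, which is then estimated by the same three-region split you use. No Beurling transform, no second $\bar\partial$-derivative of $G$, no separate Lipschitz step.

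Your approach factors $K(\zeta)=\zeta\,e^{2\pi\Bi\tilde g(\zeta)}$ and reduces to two estimates on the ``logarithm'' $\tilde g$. This is conceptually transparent (differentiability at $0$ is manifest) but costs more machinery: two integral representations (Cauchy--Pompeiu for $\tilde g$, the Beurling--Ahlfors transform for $\partial_\zeta\tilde g$) and an extra second-order bound $|\partial_{\bar z\bar z}G|\preceq\frac{\alf}{r}e^{-2\pi\alf\Im(z+A)}$ that is not recorded in Lemma~\ref{G-properties}. Your weighted pointwise estimate for $S\mu$ is correct and uses essentially the same three-region decomposition as the paper.

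Two small points of bookkeeping. First, the ``elementary scaling estimate'' $\iint_{B(0,\rho)}|\xi|^{\alf-2}|\xi-\zeta|^{-1}\,dA\preceq|\zeta|^{\alf-1}$ actually carries a hidden factor $1/\alf$ (as the paper's region $B_1$ shows); this is harmless because your input bound on $\partial_{\bar\zeta}\tilde g$ is really $\frac{\alf}{r}|\zeta|^{\alf-1}$, not just $\frac{1}{r}|\zeta|^{\alf-1}$, and the $\alf$'s cancel. Second, the boundary term $h_0$ in Cauchy--Pompeiu has $|h_0'|\preceq 1/r$ rather than $\preceq 1$ (since $\sup_{\partial B(0,\rho)}|\tilde g|$ is only controlled via Lemma~\ref{G-properties}-iii), but this is still dominated by $\frac{1}{r}|\zeta|^{\alf-1}$ on the unit disc, so the conclusion is unaffected.
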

\begin{proof}
By the definition of derivative and Lemma~\ref{G-properties}-iii,
\begin{equation}\label{K'(0)}
\begin{aligned}
\partial_\zeta K(0)&= \lim_{\zeta\to 0} \frac{e^{2\pi\Bi G(\frac{1}{2\pi \Bi}\log \zeta)}}{\zeta}\\
&=\lim_{\zeta\to 0} e^{2\pi\Bi (G(\frac{1}{2\pi \Bi}\log \zeta)-\frac{1}{2\pi \Bi}\log \zeta)}\\
&=e^{2\pi\Bi z_0}.
\end{aligned}
\end{equation}
\medskip

To prove the inequality, first we estimate the Laplacian of $K$ on $\mathbb{C}\setminus \{0\}$. 
Using the complex chain rule, with $\zeta=\phi(z)$ and $\psi(\zeta)=z$, at $\zeta\neq 0$
\begin{equation}\label{K'(zeta)}
\begin{aligned}
\partial_\zeta K=\partial_\zeta (\phi\circ G\circ\psi) &= \partial_z \phi \circ (G\circ \psi)\cdot \partial_\zeta (G\circ \psi) +
\partial_{\bar{z}} \phi \circ (G\circ \psi)\cdot \partial_\zeta (\overline{G\circ \psi}) \\
&=\partial_z \phi \circ G\circ \psi \cdot \partial_\zeta (G\circ \psi)\\
&=\partial_z \phi \circ G\circ \psi\cdot \partial_z G \circ \psi \cdot \partial_\zeta \psi, 
\end{aligned}
\end{equation}
therefore,  
\begin{align*}
\partial_{\bar{\zeta}}\partial_\zeta K& =\partial_{\bar{\zeta}} \left(\partial_z \phi \circ (G\circ \psi)\right)
\cdot \partial_z G \circ \psi \cdot \partial_\zeta \psi  \\
 &\qquad +\partial_z \phi \circ (G\circ \psi)\cdot \partial_{\bar{\zeta}}(\partial_z G \circ \psi)\cdot \partial_\zeta\psi\\
&= \left( \partial_{zz} \phi \circ G\circ \psi \cdot \partial_{\bar{z}} G \circ \psi\cdot \partial_{\bar{\zeta}} \bar{\psi}\cdot  \partial_{z} G \circ \psi  \right) \cdot \partial_z \psi \\
& \qquad+ \partial_{z} \phi \circ G \circ \psi \cdot 
\left((\partial_{zz}G\circ \psi)\cdot \partial_{\bar{\zeta}}\psi + \partial_{z\bar{z}} G\circ \psi \cdot \partial_{\bar{\zeta}}\bar{\psi} \right) 
\cdot \partial_z \psi \\
&=\partial_{zz} \phi \circ G\circ \psi \cdot \partial_{\bar{z}} G \circ \psi\cdot \overline{\partial_{\zeta} \psi}\cdot  \partial_{z} G \circ \psi \cdot \partial_z \psi \\
& \qquad+ \partial_{z} \phi \circ G \circ \psi \cdot 
\partial_{z\bar{z}} G\circ \psi \cdot \overline{\partial_{\zeta} \psi} \cdot \partial_z \psi \\
&= 2\pi \Bi K |\partial_\zeta \psi|^2\left(\partial_{z\bar{z}} G\circ\psi + 2\pi\Bi \partial_{\bar{z}} G
\circ \psi \cdot \partial_z G\circ \psi\right).
\end{align*}
(To get the above expression, one could also find the coefficient of $(\zeta-\zeta_0)(\bar{\zeta}-\bar{\zeta_0})$ in the expansion of $K(\zeta)$ near $\zeta_0$, as $K$ is real analytic away from the positive real axis.) 

Since $G$ is analytic below the horizontal line $\Im z=-1$, $\partial_{\bar{\zeta}\zeta}K(\zeta)=0$ outside of the disk of radius $e^{2\pi}$. 
Above this line, we have the estimates in Lemma~\ref{G-properties}-i that provide us with  
\[|\partial_{\zeta\bar{\zeta}}K(\zeta)|\preceq \frac{\alf}{r} |\zeta|^{\alf-1}.\]

Fix $\zeta_0\in \mathbb{C}$ and choose a disk $B(0, R)$ of radius $R>e^{2\pi}$ containing $\zeta_0$. 
The general form of Cauchy Integral Formula states that for the continuous function $\partial_{\zeta} K$
\begin{align*}
\partial_{\zeta}K(\zeta_0)&= \frac{1}{2\pi \Bi} \int_{\partial B(0,R)} \frac{\partial_\zeta K (\zeta)}{\zeta-\zeta_0} d\zeta
+\frac{1}{2\pi\Bi} \iint_{B(0,R)}\frac{\partial_{\bar{\zeta}} \partial_\zeta K(\zeta)}{\zeta-\zeta_0} d\zeta d\bar{\zeta} 
\end{align*}
Outside of the disk $B(0,e^{2\pi})$, $\partial_\zeta K=e^{2\pi \Bi G(0,0)}$ and $\partial{\zeta\bar{\zeta}}=0$ by 
Lemma~\ref{G-properties}, thus the above formula reduces to 
\begin{align*}
\partial_{\zeta}K(\zeta_0)=1+ \frac{1}{2\pi\Bi} 
\iint_{B(0,e^{2\pi})} \frac{\partial_{\bar{\zeta}\zeta} K(\zeta)}{\zeta-\zeta_0} d\zeta d\bar{\zeta}. 
\end{align*}
 
We may now estimate the difference at $\zeta_0\in B(0,1)$ as 
\begin{align*}
|\partial_{\zeta}K(\zeta_0)-\partial_{\zeta}K(0)|&\leq
\frac{1}{2\pi}\iint_{B(0,e^{2\pi})} 
|\partial_{\bar{\zeta}\zeta} K(\zeta)|\frac{|\zeta_0|}{|\zeta-\zeta_0|\cdot|\zeta|} d\zeta d\bar{\zeta} \\
&\preceq \frac{\alf}{r} |\zeta_0|^\alf 
\iint_{B(0,e^{2\pi})} \frac{|\zeta_0|^{1-\alf}}{|\zeta-\zeta_0|\cdot|\zeta|^{2-\alf}} d\zeta d\bar{\zeta} \\
&\preceq \frac{1}{r} |\zeta_0|^\alf.
\end{align*}
The last inequality is obtained by virtue of the following calculations. 

Define $B_1:=B(0,|\zeta_0|/2)$, $B_2:=B(\zeta_0,|\zeta_0|/2)$, and $B_3:=B(0,e^{2\pi})\setminus(B_1\cup B_2)$. Note that on $B_1$, we have $|\zeta-\zeta_0|\geq |\zeta_0|/2$, on $B_2$ we have $|\zeta|\geq |\zeta_0|/2$, and on $B_3$ we have $|\zeta-\zeta_0|\geq |\zeta|/2$. 
Hence 
\begin{align*}
\iint_{B_1} \frac{|\zeta_0|^{1-\alf}}{|\zeta-\zeta_0|\cdot|\zeta|^{2-\alf}} d\zeta d\bar{\zeta}
&\leq \frac{2}{|\zeta_0|^\alf} \iint_{B_1} \frac{1}{|\zeta|^{2-\alf}} d\zeta d\bar{\zeta} 
= \frac{2^{3-\alf}\pi}{\alf}, \\
\iint_{B_2} \frac{|\zeta_0|^{1-\alf}}{|\zeta-\zeta_0|\cdot|\zeta|^{2-\alf}} d\zeta d\bar{\zeta}
&\leq \frac{2^{2-\alf}}{|\zeta_0|} \iint_{B_2} \frac{1}{|\zeta-\zeta_0|} d\zeta d\bar{\zeta}  
=2^{3-\alf} \pi,
\end{align*}
and 
\begin{align*}
\iint_{B_3} \frac{|\zeta_0|^{1-\alf}}{|\zeta-\zeta_0|\cdot|\zeta|^{2-\alf}} d\zeta d\bar{\zeta}
&\leq 2|\zeta_0|^{1-\alf} \iint_{B_3} \frac{1}{|\zeta|^{3-\alf}} d\zeta d\bar{\zeta}\\
&\leq  2|\zeta_0|^{1-\alf} \iint_{B(0,e^{2\pi}) \setminus B_1} \frac{1}{|\zeta|^{3-\alf}} d\zeta d\bar{\zeta}\\
&= \frac{8\pi |\zeta_0|^{1-\alf}}{\alf-1}(e^{2\pi(\alf-1)}-(|\zeta_0|/2)^{\alf-1})\\
&\leq \frac{2^{4-\alf}\pi}{1-\alf}.
\end{align*}
\end{proof}

\begin{lem}
We have Inequality \eqref{G-zz} on the upper half plane. 
\end{lem}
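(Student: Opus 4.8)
The plan is to read $\partial_z G$ off the auxiliary map $K$ and then invoke Lemma~\ref{K-properties}. First I would record the chain–rule identity contained in \eqref{K'(zeta)}: for $z$ in the upper half plane put $\zeta:=\phi(z)=e^{2\pi\Bi z}$, so $|\zeta|=e^{-2\pi\Im z}$ and $\psi(\zeta)$ differs from $z$ by an integer. Inserting $\partial_z\phi(w)=2\pi\Bi e^{2\pi\Bi w}$, $e^{2\pi\Bi G(\psi(\zeta))}=K(\zeta)$ and $\partial_\zeta\psi(\zeta)=1/(2\pi\Bi\zeta)$ into the first line of \eqref{K'(zeta)} (the $\partial_{\bar z}\phi$ and $\partial_{\bar\zeta}\psi$ terms vanish since $\phi,\psi$ are holomorphic), and using $\partial_z G\circ\psi=\partial_z G$ by \eqref{E:periodic}, one gets $\partial_\zeta K(\zeta)=\tfrac{K(\zeta)}{\zeta}\partial_z G(z)$, hence
\[
\partial_z G(z)-1=\frac{\zeta\,\partial_\zeta K(\zeta)-K(\zeta)}{K(\zeta)}.
\]
So everything reduces to bounding the numerator above by a multiple of $\tfrac1r|\zeta|^{1+\alf}$ and the denominator below by a multiple of $|\zeta|$.

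For the numerator I would use two ingredients. Let $z_0$ be the constant of Lemma~\ref{G-properties}-ii, so that $\partial_\zeta K(0)=e^{2\pi\Bi z_0}$ by \eqref{K'(0)}; by precompactness of $\fff$, $|e^{2\pi\Bi z_0}|$ is bounded above and below by positive constants independent of $h$ and $\alf$. The first ingredient is Lemma~\ref{K-properties}: $|\partial_\zeta K(\zeta)-e^{2\pi\Bi z_0}|\preceq\tfrac1r|\zeta|^\alf$. The second comes from the identity $K(\zeta)/\zeta=e^{2\pi\Bi(G(\psi(\zeta))-\psi(\zeta))}$ together with the \emph{quantitative} form of Lemma~\ref{G-properties}-ii: the Green's integral estimate in its proof actually gives $|(G(z)-z)-z_0|\preceq\tfrac1r e^{-2\pi\alf\Im z}$ on $\{\Im z\ge 0\}$, since $\partial_{\bar z}G\preceq\tfrac\alf r e^{-2\pi\alf\Im(z+A)}$ and $\Im z+\Im A\ge -C/\alf$ (one only needs the estimate on $\Theta(r,\alf)$, so $A$ may be taken with $\Im A\ge -C/\alf$). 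Writing $\varepsilon:=(G(\psi(\zeta))-\psi(\zeta))-z_0$, so $|\varepsilon|\preceq\tfrac1r|\zeta|^\alf$, one has $K(\zeta)=e^{2\pi\Bi z_0}\zeta\,e^{2\pi\Bi\varepsilon}$; substituting this and the first ingredient and cancelling $e^{2\pi\Bi z_0}\zeta$ yields
\[
\partial_z G(z)-1=e^{-2\pi\Bi\varepsilon}e^{-2\pi\Bi z_0}\bigl(\partial_\zeta K(\zeta)-e^{2\pi\Bi z_0}\bigr)-\bigl(1-e^{-2\pi\Bi\varepsilon}\bigr).
\]

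I would then close by a case split on the size of $\tfrac1r|\zeta|^\alf$. When it is below a fixed absolute constant, $|\varepsilon|$ is small, so $|e^{-2\pi\Bi\varepsilon}|\preceq 1$ and $|1-e^{-2\pi\Bi\varepsilon}|\preceq|\varepsilon|\preceq\tfrac1r|\zeta|^\alf$, and both terms on the right are $\preceq\tfrac1r|\zeta|^\alf$, which is \eqref{G-zz}. For the remaining $z$, where $\tfrac1r|\zeta|^\alf$ is bounded below, it suffices to bound $|\partial_z G(z)-1|$ by a constant: this follows from $\partial_z G=(L_h'\circ H)\cdot\partial_z H$ with $|L_h'|$ uniformly bounded on the relevant part of $\Dom L_h$ (Lemma~\ref{bounded-derivative}) and $|\partial_z H-1|\preceq\tfrac\alf r e^{-2\pi\alf\Im(z+A)}\preceq\tfrac1r|\zeta|^\alf$ (Lemma~\ref{H-properties}-iii, using $\alf\le 1$ and $\Im A\ge -C/\alf$).

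The step I expect to be the main obstacle is exactly the uniform lower bound $|K(\zeta)|\succeq|\zeta|$, equivalently the control of $\Im(G(z)-z)$ uniformly in $h,\alf,r$ on the upper half plane. The crude bound of Lemma~\ref{G-properties}-iii, $|G(z_2)-G(z_1)-(z_2-z_1)|\le C_7/r$, degrades as $r\to 0$ and is not enough; one genuinely has to re-enter the Green's–formula argument of Lemma~\ref{G-properties}-ii and keep the decaying factor $e^{-2\pi\alf\Im z}$, observing that the prefactor $e^{-2\pi\alf\Im A}$ stays bounded on the region $\Theta(r,\alf)$ where the estimate is needed. Once this sharper decay is in hand, the case split above finishes the proof, because in the regime where $\tfrac1r|\zeta|^\alf$ is large the target bound is itself bounded below by a constant, so only a soft bound on $|\partial_z G-1|$ is required.
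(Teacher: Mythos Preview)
Your approach is correct and is precisely the one the paper intends: the paper's own proof is the single sentence ``This results from Lemmas~\ref{K-properties} and \ref{G-properties}-ii, using Equations \eqref{K'(0)} and~\eqref{K'(zeta)},'' and you have faithfully unpacked what that sentence means---the chain-rule identity $\partial_z G=\dfrac{\zeta\,\partial_\zeta K(\zeta)}{K(\zeta)}$ from \eqref{K'(zeta)}, the value $\partial_\zeta K(0)=e^{2\pi\Bi z_0}$ from \eqref{K'(0)}, the H\"older bound of Lemma~\ref{K-properties}, and the asymptotic $K(\zeta)/\zeta\to e^{2\pi\Bi z_0}$ coming from Lemma~\ref{G-properties}-ii. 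Your observation that one must revisit the Green's-formula computation in the proof of Lemma~\ref{G-properties}-ii to extract the quantitative decay $|(G(z)-z)-z_0|\preceq\tfrac{1}{r}e^{-2\pi\alf\Im z}$ (rather than merely using the qualitative limit or the cruder bound in part~iii) is exactly the step the paper leaves implicit; since in the application (Proposition~\ref{fine-tune}) one has $\Im A\ge 0$, the prefactor $e^{-2\pi\alf\Im A}$ is indeed harmless, and your case split on the size of $\tfrac{1}{r}|\zeta|^\alf$ cleanly handles both regimes.
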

\begin{proof}
This results from Lemmas~\ref{K-properties} and \ref{G-properties}-ii, using Equations \eqref{K'(0)} 
and~\eqref{K'(zeta)}.
\end{proof}

\begin{rem}
An alternative approach to get Inequality~\eqref{G-zz} from the model map $H$ is using the Beltrami 
differential equation. 
It follows from the properties of $H$ that the complex dilatation of $H$, $\mu:=\partial_{\bar{z}} H/\partial z H$,
can be extended to a $C^{1+\epsilon}$ map onto the upper half plane, using the relation $\mu(z+1)=\mu(z)$. 
The function $\mu$ has absolute value strictly less than $1$ at points with large imaginary part. 
The Beltrami equation $\mu \partial_z G=\partial_{\bar{z}} G$ has a $C^2$ periodic solution with Fourier 
expansion. 
Indeed, one can find the coefficients of this expansion in terms of $a_j$'s and $b_j$'s, by comparing the 
coefficients in the Beltrami equation term by term. 
\end{rem}
\subsection{Main Estimate}
\begin{propo}[main estimate]\label{fine-tune} 
There exists a constant $M$ such that for every $r\in (0,1/2]$, and every 
$w\in \Dom L_h\cap\Theta(r/\alf)\cap\Theta(C_1+1)$, we have 
\begin{equation}\label{E:main-estimate}
|L_h'(w)-1|\leq \frac{M}{r} e^{-2\pi \alf \Im w}. 
\end{equation}
\end{propo}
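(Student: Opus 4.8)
The plan is to read off the estimate from the relation $G=L_h\circ H$, combining the estimate \eqref{G-zz} on $\partial_z G$ with the bounds on the model map $H$ from Lemma~\ref{H-properties}. Since $L_h$ is holomorphic, the complex chain rule gives $\partial_z G(z)=L_h'(H(z))\cdot\partial_z H(z)$ on the upper half plane, so wherever $\partial_z H(z)\neq 0$,
\[
L_h'(H(z))-1=\frac{\bigl(\partial_z G(z)-1\bigr)-\bigl(\partial_z H(z)-1\bigr)}{\partial_z H(z)}.
\]
By \eqref{G-zz} the first term in the numerator is $\preceq\frac1r e^{-2\pi\alf\Im z}$, and by Lemma~\ref{H-properties}-iii together with $\partial_z=\tfrac12(\partial_s-\Bi\partial_t)$ the second is $\preceq\frac{\alf}{r}e^{-2\pi\alf(\Im z+\Im A)}$. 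Thus the proposition reduces to: (a) for a prescribed $w\in\Dom L_h\cap\Theta(r/\alf)\cap\Theta(C_1+1)$, choosing the model parameter $A=A(w)$ so that $w=H(z)$ for a suitable $z$ in the domain of $H$, with $\K_h\subseteq\Theta(r/\alf)\cap\Theta(C_1+1)$ and $\alf\,\Im A$ bounded by a universal constant; (b) converting $e^{-2\pi\alf\Im z}$ into $e^{-2\pi\alf\Im w}$; and (c) disposing of the parameter range where these bounds are too weak.

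First I would make two reductions. One may assume $\alf/r$ is bounded above by a universal constant: $\Dom L_h$ already avoids disks of a fixed radius about $\mathbb{Z}/\alf$, so for smaller $r$ either the condition $w\in\Theta(r/\alf)$ is implied by $w\in\Dom L_h$, in which case the assertion for $r$ follows from that for a larger comparable value (its right-hand side being smaller), or $\Im w$ is forced to be bounded and the inequality is trivial by Lemma~\ref{bounded-derivative}. Second, one may assume $\Im w$ exceeds a threshold $T(r,\alf)$: below it, $\frac1r e^{-2\pi\alf\Im w}$ exceeds a universal constant, so $\frac Mr e^{-2\pi\alf\Im w}\ge C_4+1\ge|L_h'(w)-1|$ once $M$ is chosen large, using Lemma~\ref{bounded-derivative} (and the compactness of $\ff$ to bound $|L_h'|$ on the remaining relatively compact piece of $\Dom L_h$). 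We take $T$ large enough that in addition $\Im w>\max\{r/\alf,C_1+1\}+5$.

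For (a) and (b): given such a $w$, set $\Re A:=\Re w-\tfrac12$ and $\Im A:=\max\{r/\alf,C_1+1\}+1$. Then the base of $\K_h$ lies above all the excluded disks, so automatically $\K_h\subseteq\Theta(r/\alf)\cap\Theta(C_1+1)$ and $w\in\K_h$, while $\alf\,\Im A$ is bounded (here $\alf/r$ being bounded is what keeps $\Im A=O(1/\alf)$). Form $H$ and $G=L_h\circ H$. Solving $H(z)=w$ and using $\Im H(s,t)=\Im A+t+\int_0^s Y(\ell,t)\,d\ell$ together with $|Y(s,t)|\preceq\frac{\alf}{r}e^{-2\pi\alf(t+\Im A)}$, a standard bootstrap based on $\Im w>T(r,\alf)$ shows $\max\{|\partial_sH-1|,|\partial_tH-\Bi|\}\le\tfrac12$ throughout $[0,1]\times[0,\infty)$; hence $H$ maps the closed strip $\{0\le\Re z\le1,\ \Im z\ge0\}$ homeomorphically onto $\K_h$, $|\partial_zH(z)|\ge\tfrac12$, and $|\Im z-(\Im w-\Im A)|\le1$. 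Consequently $e^{-2\pi\alf\Im z}\le e^{2\pi\alf(\Im A+1)}e^{-2\pi\alf\Im w}\preceq e^{-2\pi\alf\Im w}$, and likewise $\frac{\alf}{r}e^{-2\pi\alf(\Im z+\Im A)}\preceq\frac1r e^{-2\pi\alf\Im w}$. Feeding these into the displayed identity gives
\[
|L_h'(w)-1|\le 2\bigl(|\partial_zG(z)-1|+|\partial_zH(z)-1|\bigr)\preceq\frac1r e^{-2\pi\alf\Im w},
\]
which together with the two reductions yields \eqref{E:main-estimate} with a single constant $M$.

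The conceptual content is immediate: Proposition~\ref{fine-tune} is just the estimate \eqref{G-zz} on the model transported to the linearizing coordinate by way of $G=L_h\circ H$ and the chain rule. The only real work is step (a) -- placing $\K_h$ around $w$ so that it stays inside $\Theta(r/\alf)\cap\Theta(C_1+1)$ with $\alf\,\Im A$ bounded -- together with a compatible choice of $T(r,\alf)$, $M$, and, if needed, a smaller $\eps_0$, and the bootstrap pinning $\Im z=\Im w-\Im A+O(1)$. This bookkeeping is carried out exactly as in the proof of Lemma~5.4 in \cite{Ch10-I}; everything else is contained in the estimates on $G$ and $H$ already proved in this section.
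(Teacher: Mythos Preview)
Your approach is essentially the paper's: both derive the bound from the chain-rule identity $\partial_z G = L_h'(H)\cdot\partial_z H$ together with \eqref{G-zz} and Lemma~\ref{H-properties}, after reducing to large $\Im w$ via Lemma~\ref{bounded-derivative}. The paper's proof is much terser---it simply asserts one can ``choose a $\K_h\subset\Theta(r/\alf)\cap\Theta(C_1+1)$ containing $w$'' and writes the $H$ and $G$ estimates directly in terms of $\Im w$---whereas you make explicit the placement of $A$, the bound on $\alf\,\Im A$, the conversion $e^{-2\pi\alf\Im z}\preceq e^{-2\pi\alf\Im w}$, and the bootstrap ensuring $H$ is a diffeomorphism; one minor point is that your first reduction can be justified more cleanly using the hypothesis $w\in\Theta(C_1+1)$ (which already makes $\Theta(r/\alf)$ redundant once $r/\alf\le C_1+1$) rather than an unstated property of $\Dom L_h$.
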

\begin{proof} 
Since $|L_h'(w)|$ is uniformly bounded by Lemma~\ref{derivative}, it is enough to prove the proposition for 
$w$ with $\Im w\geq 0$. 
Given such $w$, we choose a $\K_h \subset \Theta(r/\alf) \cap\Theta(C_1+1)$ containing $w$, and 
consider the corresponding map $H$ and $G$. (Here $\Im A\geq 0$.) 

By Lemmas \ref{H-properties} and \ref{G-properties}, at $z:=H^{-1}(w)$ we have 
\[|\partial_z H(z)-1| \leq M_1 \frac{\alf}{r} e^{-2\pi\alf \Im w}, \text{ and }   
|\partial_z G(z)-1| \leq M_2 \frac{1}{r} e^{-2\pi\alf \Im w},\]
for some constants $M_1$ and $M_2$ independent of $r$ and $w$ (and $G$, $H$).

Applying $\partial_z$ to $G= L_h\circ H$ at $w=H(z)$, we obtain 
\[\partial_{z} G(z)=\partial_{w} L_h (w)\cdot \partial _z H(z).\]
This implies the desired inequality.
\end{proof}
\subsection{Proof of the main technical lemmas}
\begin{lem}\label{formula-derivative}
For every $h\in \fff$ and $z\in \mathbb{C}$, let $\mathbb{L}og$ denote an arbitrary inverse branch of $\ex$ 
containing $\tau_h(z)$ in the interior of its domain. 
We have 
\[|(\mathbb{L}og\circ \tau_h)'(z)-\alf|\leq C_8\alf e^{-2\pi \alf \Im z},\]
where $C_8$ is a constant independent of $h$ and $w$.
\end{lem}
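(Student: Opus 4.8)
The plan is to exploit that, in contrast with the Fatou coordinate $\Phi_h$, both factors of $\mathbb{L}og\circ\tau_h$ are elementary: $\tau_h$ is a fixed rational function of $e^{-2\pi\Bi\alf w}$, and $\mathbb{L}og$, being an inverse branch of $\ex(\zeta)=\tfrac{-4}{27}s(e^{2\pi\Bi\zeta})$, is — up to an additive constant and the conjugation $s$, neither of which affects any of the moduli below — a branch of $\zeta\mapsto\tfrac1{2\pi\Bi}\log\zeta$. So one differentiates directly: by the chain rule and $\tau_h(w)=\sigma_h/(1-e^{-2\pi\Bi\alf w})$,
\[
(\mathbb{L}og\circ\tau_h)'(z)=\frac{1}{2\pi\Bi}\cdot\frac{\tau_h'(z)}{\tau_h(z)}=\frac{1}{2\pi\Bi}\cdot\frac{d}{dz}\log\tau_h(z)=\frac{\alf}{1-e^{2\pi\Bi\alf z}}.
\]
This one line carries all the analytic content of the lemma; what remains is algebra together with a single lower bound.

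Using $1-e^{-2\pi\Bi\alf z}=\sigma_h/\tau_h(z)$, the formula rewrites as
\[
(\mathbb{L}og\circ\tau_h)'(z)-\alf=\frac{\alf\,e^{2\pi\Bi\alf z}}{1-e^{2\pi\Bi\alf z}}=-\frac{\alf\,\tau_h(z)}{\sigma_h},
\]
so that $\bigl|(\mathbb{L}og\circ\tau_h)'(z)-\alf\bigr|=\alf\,\bigl|1-e^{-2\pi\Bi\alf z}\bigr|^{-1}$. Since $\bigl|e^{-2\pi\Bi\alf z}\bigr|=e^{2\pi\alf\Im z}$, the assertion is equivalent to a uniform bound $\bigl|1-e^{-2\pi\Bi\alf z}\bigr|\ge c_0\,e^{2\pi\alf\Im z}$, and I would argue in two ranges. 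When $e^{2\pi\alf\Im z}\ge 2$ this is immediate: $\bigl|1-e^{-2\pi\Bi\alf z}\bigr|\ge e^{2\pi\alf\Im z}-1\ge\tfrac12 e^{2\pi\alf\Im z}$, so the bound holds with $c_0=\tfrac12$. When $e^{2\pi\alf\Im z}<2$, so that $e^{-2\pi\alf\Im z}>\tfrac12$, it suffices to bound $\bigl|1-e^{-2\pi\Bi\alf z}\bigr|$ below by a constant independent of $h$; this is where the hypothesis that $\tau_h(z)$ lie in the interior of the domain of $\mathbb{L}og$ — together with the range of $z$ relevant for the applications, via the identity $\mathbb{L}og\circ\tau_h=\chi_h\circ L_h$ (which follows from $\chi_h=\mathbb{L}og\circ\Phi_h^\dagger$, $L_h=\Phi_h\circ\tau_h$ and $\Phi_h^\dagger\circ\Phi_h=\mathrm{id}$) and the location of $\Dom L_h$ — keeps $z$ at a definite distance from $\tfrac1\alf\mathbb{Z}$, equivalently $\tau_h(z)$ in a fixed compact part of $\Dom h\setminus\{0,\sigma_h\}$; since $\bigl|1-e^{-2\pi\Bi\alf z}\bigr|\asymp\alf\,\bigl|z-n/\alf\bigr|$ near $z=n/\alf$, this yields the required constant. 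Taking $C_8$ to be the larger of the two constants completes the argument.

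The differentiation and the manipulations are mechanical, and the first of the two ranges is unconditional; the one point that genuinely needs care is that the lower bound for $\bigl|1-e^{-2\pi\Bi\alf z}\bigr|$ in the second range can be chosen uniform in $h$ and in the renormalization level. That uniformity does not come from the bare statement of the lemma: it is obtained by pinning down exactly which $z$ occur when the lemma is invoked (the $L_h$-preimages of the points at which $\Phi_h^\dagger$, hence $\chi_h$, is evaluated) and then reading off from the geometry of $\Dom L_h$ and of $\Dom h$ that these $z$ stay away from $\tfrac1\alf\mathbb{Z}$ by a controlled amount. The compactness of the class $\ff$ (and the consequent uniform control on $\sigma_h$ and on $\Dom L_h$) is what makes this uniform.
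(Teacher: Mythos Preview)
Your approach---direct computation of $(\mathbb{L}og\circ\tau_h)'$ via the logarithmic derivative of $\tau_h$, yielding the closed form $\alf/(1-e^{2\pi\Bi\alf z})$, followed by an elementary bound---is precisely what the paper means by ``explicit calculations,'' and the computation is correct. You are also right that the inequality, read literally for all $z\in\cc$, fails as $z\to\mathbb{Z}/\alf$; the missing quantitative hypothesis (a definite distance from $\mathbb{Z}/\alf$) is supplied in the sole application by $z=L_h^{-1}(w)\in\Theta(r'/\alf)$, exactly as you diagnose.
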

\begin{proof}
The proof follows from explicit calculations. 
\end{proof}

\begin{proof}[Proof of Proposition~\ref{main-estimate}]
By definition, $\chi_h:= \mathbb{L}og \circ (\Phi_h^{\dagger})^{-1}$, and 
$(\Phi_h^{\dagger})^{-1}=\tau_h\circ L_h^{-1}.$ 
To estimate the derivative of $\chi_h$, first we estimate the derivative of the inverse map $L_h^{-1}$ 
at $w$ using Lemma~\ref{fine-tune}.
To this end, we need to locate $L_h^{-1}(w)$.

Given $w\in \Theta(r,\alf)\cap \Dom \chi_h$, by pre-compactness of the class $\fff$, and Lemma~\ref{derivative},
there exists a $r'$, with $r'/r$ uniformly bounded, such that $L_h^{-1}(w)$ belongs to $\Theta(r'/\alf)$. 

On the other hand, from Lemma~\ref{cyl-cond}-iv, we know that $\Im L_h^{-1}(1/2\alf)$ is bigger than 
$-C_3\log (1+1/2\alf)$.
Now, from the first part of the same lemma and the relation $L_h(F_h(w))=L_h(w)+1$ one concludes
that  $\Im L_h^{-1}(w)$ must be at least 
\[-C_3\log (1+1/2\alf)-1/4\alf.\]
Now choose $\delta\in \mathbb{R}$ such that
\[-C_3\log (1+1/2\alf)-1/4\alf\geq\delta/\alf\]
holds for every $\alf\in (0,1)$.
The above argument implies that 
\begin{align*}
\Im L_h^{-1}(w)&\geq -C_3\log (1+1/2\alf)-1/4\alf+\Im L_h^{-1}(\Bi\Im w+1/2\alf),
\end{align*}
which is at least $\delta/\alf + \Im w-4(C_7+8)$, by the second statement in Lemma~\ref{G-properties}-iii, and 
a similar property for $H$.

Now we may use Lemma~\ref{fine-tune}, at $w':=L_h^{-1}(w)$, to obtain 
\[|L_h'(w')-1|\leq \frac{M'}{r} e^{-2\pi \alf \Im w},\]
for an appropriate constant $M'$.

With $z=L_h^{-1}(w)$, 
\begin{align*}
|\chi'_h(w)-\alf| &\leq |(\mathbb{L}og\circ \tau_h)'(z)\cdot (L_h^{-1})'(z)-
(\mathbb{L}og\circ \tau_h)'(w)+(\mathbb{L}og\circ \tau_h)'(z)-\alf| \\
&\leq |(\mathbb{L}og\circ \tau_h)'(z)| |(L_h^{-1})'(w)-1|+|(\mathbb{L}og\circ \tau_h)'(z)-\alf| \\
&\leq C\frac{\alf}{r} e^{-2\pi \alf \Im w},
\end{align*}
for some constant $C$.
\end{proof}
\subsection*{Acknowledgment} 
I would like to thank X. Buff and A. Ch\'eritat for several helpful discussions around the proof of 
Theorem~\ref{pc-area}.
Also, I acknowledge the financial support from Leverhulme Trust in London and Institut de Math\'{e}matiques 
de Toulouse while carrying out this research.
\bibliographystyle{smfalpha}
\bibliography{Main.bbl}
\end{document}